\documentclass[reqno, 10pt]{amsart}
\usepackage{upgreek}
\usepackage{mathtools}
\usepackage{amssymb}
\usepackage{mathrsfs}
\usepackage{amsthm}
\usepackage{enumerate}
\numberwithin{equation}{section}

\usepackage{lmodern}

\usepackage[scr=boondox]{mathalpha}
\usepackage{xcolor}

\usepackage[left=1in, right=1in, top=1.25in, bottom=1.25in]{geometry}

\usepackage{tikz}
\usetikzlibrary{shapes,arrows,arrows.meta}

\usepackage{multicol}
\usepackage{graphicx}
\usepackage[colorlinks,allcolors=blue!75!black]{hyperref}
\usepackage{cancel}
\usepackage{booktabs} 

\usepackage[nameinlink,capitalize]{cleveref}

\makeatletter
\newcommand{\leqnomode}{\tagsleft@true\let\veqno\@@leqno}
\makeatother

\allowdisplaybreaks

\newcommand{\bbL}{\mathbb{L}}
\newcommand{\bbM}{\mathbb{M}}
\newcommand{\bbN}{\mathbb{N}}
\newcommand{\bbP}{\mathbb{P}}

\newcommand{\bbU}{\mathbb{U}}

\newcommand{\bbX}{\mathbb{X}}

\newcommand{\bbUdiv}{\bbU_{\operatorname{div}}}

\def\bB{\mathbf{B}}
\def\bC{\mathbf{C}}

\def\bE{\mathbf{E}}
\def\bF{\mathbf{F}}
\def\bG{\mathbf{G}}
\def\bH{\mathbf{H}}
\def\bI{\mathbf{I}}
\def\bK{\mathbf{K}}
\def\bL{\mathbf{L}}
\def\bM{\mathbf{M}}
\def\bN{\mathbf{N}}

\def\bR{\mathbf{R}}

\def\bU{\mathbf{U}}
\def\bV{\mathbf{V}}
\def\bW{\mathbf{W}}

\def\ba{\mathbf{a}}
\def\bb{\mathbf{b}}
\def\bc{\mathbf{c}}
\def\be{\mathbf{e}}
\def\bg{\mathbf{g}}
\def\bh{\mathbf{h}}

\def\bm{\mathbf{m}}
\def\bn{\mathbf{n}}

\def\bu{\mathbf{u}}
\def\bv{\mathbf{v}}
\def\bw{\mathbf{w}}
\def\bx{\mathbf{x}}
\def\by{\mathbf{y}}
\def\bz{\mathbf{z}}
\renewcommand{\bf}{\mathbf{f}}

\def\bnu{\boldsymbol{\nu}}
\def\bxi{\boldsymbol{\xi}}

\newcommand{\rmb}{\mathrm{b}}
\newcommand{\LL}{\mathrm{L}}
\newcommand{\HH}{\mathrm{H}}

\newcommand{\CC}{\mathrm{C}}

\newcommand{\RR}{\mathrm{R}}
\newcommand{\WW}{\mathrm{W}}

\newcommand{\cB}{\mathcal{B}}

\newcommand{\cP}{\mathcal{P}}
\newcommand{\cQ}{\mathcal{Q}}

\newcommand{\cT}{\mathcal{T}}

\newcommand{\bsm}{\mathbfscr{m}}

\newcommand{\bsu}{\mathbfscr{u}}
\newcommand{\sD}{\mathscr{D}}
\newcommand{\sE}{\mathscr{E}}
\newcommand{\sF}{\mathscr{F}}

\newcommand{\sP}{\mathscr{P}}
\newcommand{\bsP}{\mathbfscr{P}}
\newcommand{\sG}{\mathscr{G}}
\newcommand{\sM}{\mathscr{M}}

\newcommand{\sfe}{\mathsf{e}}
\newcommand{\sfE}{\mathsf{E}}
\newcommand{\sfU}{\mathsf{U}}
\newcommand{\osfU}{\overline{\mathsf{U}}}
\newcommand{\sfu}{\mathsf{u}}
\newcommand{\sfv}{\mathsf{v}}

\newcommand{\obU}{\overline{\mathbf{U}}}

\newcommand{\obM}{\overline{\mathbf{M}}}

\newcommand{\oPhi}{\overline{\Phi}}

\newcommand{\dd}[1][y]{\if#1y\,\fi{\mathrm d}}

\renewcommand{\div}{\operatorname{div}}

\DeclareMathOperator{\curl}{curl}
\DeclareMathOperator{\bcurl}{\boldsymbol{\curl}}

\newcommand{\greennote}[1]{{\color[hsb]{0.3472, 1.0, 0.6588}#1}}

\crefname{subsection}{Section}{Sections}
\Crefname{subsection}{Section}{Sections}

\crefformat{enumi}{(#2#1#3)}
\Crefformat{enumi}{(#2#1#3)}

\theoremstyle{plain}

\newtheorem{thm}{Theorem}[section]
\crefname{thm}{Theorem}{Theorems}
\Crefname{thm}{Theorem}{Theorems}

\newtheorem{proposition}[thm]{Proposition}
\newtheorem{lemma}[thm]{Lemma}
\newtheorem{corollary}[thm]{Corollary}

\newtheorem{remark}[thm]{Remark}

\newtheorem{assump}[thm]{Assumption}
\crefname{assump}{Assumption}{Assumptions}
\Crefname{assump}{Assumption}{Assumptions}

\theoremstyle{definition}

\theoremstyle{remark}

\newenvironment{proof-thm}[1]{\vskip1em \noindent\textbf{Proof of \Cref{#1}.}\\}{\hfill$\square$\\}

\makeatletter
\renewcommand\section{\@startsection{section}{1}%
  \z@%
  {1.5\linespacing\@plus\linespacing}
  {.5\linespacing}
  {\normalfont\scshape\centering}}
\makeatother

\makeatletter
\renewcommand\subsection{\@startsection{subsection}{2}%
\z@{.7\linespacing\@plus.7\linespacing}{.1\linespacing}%
{\normalfont\centering\scshape}}
\makeatother

\makeatletter
\renewcommand\subsubsection{\@startsection{subsubsection}{3}%
\z@{.5\linespacing\@plus.7\linespacing}{.1\linespacing}%
{\normalfont\raggedright\scshape}}
\makeatother

\makeatletter
\def\@tocline#1#2#3#4#5#6#7{\relax
\ifnum #1>\c@tocdepth
\else
\par \addpenalty\@secpenalty\addvspace{#2}%
\begingroup \hyphenpenalty\@M
\@ifempty{#4}{%
\@tempdima\csname r@tocindent\number#1\endcsname\relax
}{%
\@tempdima#4\relax
}%
\parindent\z@ \leftskip#3\relax \advance\leftskip\@tempdima\relax
\rightskip\@pnumwidth plus4em \parfillskip-\@pnumwidth
#5\leavevmode\hskip-\@tempdima
\ifcase #1
\or\or \hskip 1em \or \hskip 2em \else \hskip 3em \fi%
#6\nobreak\relax
\dotfill\hbox to\@pnumwidth{\@tocpagenum{#7}}\par
\nobreak
\endgroup
\fi}
\makeatother

\subjclass[2020]{65M60, 65M15, 35Q35}

\begin{document}

\title[A priori error analysis for ferrofluids without magnetic regularization]{A priori error analysis for the Shliomis ferrofluid model without magnetic regularization}

\author[G.A.~Benavides]{Gonzalo A.~Benavides}
\author[R.H.~Nochetto]{Ricardo H.~Nochetto}

\address{Department of Mathematics, University  of Maryland College Park, MD 20742}
\email{gonzalob@umd.edu, rhn@umd.edu}

\begin{abstract}
We develop and analyze a family of conforming, energy-stable spatial semidiscretizations of the Shliomis ferrofluid model in two and three space dimensions.
Unlike many existing numerical analyses, we treat the system without introducing magnetic diffusion.
The limited spatial control supplied by the energy law complicates the treatment of magnetic nonlinearities in the error analysis.
Our formulation retains the rotational coupling and nonlinear magnetic relaxation and incorporates the magnetic normal-flux condition naturally, overcoming the dimensional restriction and boundary-enforcement difficulties of the two-potential approach of Benavides, Nochetto and Shakipov (2026).
The error analysis combines consistent stabilization of the Maxwell constraint with a cancellation arising from exact incompressibility and the pointwise symmetry of the Hessian of the magnetic potential.
We prove global solvability of the semidiscrete scheme and establish a priori error estimates for sufficiently regular exact solutions under explicit approximation and projection-stability assumptions.
We also construct an averaged $\HH^2$-conforming projector, following Girault and Scott (2002), that is defined on $\LL^1(\Omega)$, and establish its Sobolev approximation and stability properties.
These properties are needed for the error analysis and are of independent interest.
\end{abstract}

\maketitle

\section{Introduction}\label{sec:intro}
Ferrofluids are colloidal liquids consisting of a carrier fluid (such as water) in which nanoscale ferromagnetic or ferrimagnetic particles are suspended \cite{Rosensweig2013}.
They behave as standard liquids in the absence of magnetic stimuli, while their particles become strongly magnetized by applied magnetic fields.
Due to these interesting properties, these materials are utilized in a vast number of scientific and engineering applications, such as magnetic seals, dynamic loudspeakers, computer hardware, medicine and bioengineering \cite{SchererFiguereido2005,ferrofluids-review,SocoliucEtAl2022}.
Under the assumption that ferrofluids are homogeneous monophase fluids, the two most prominent hydrodynamic models are the Rosensweig model \cite{Rosensweig2013} and the Shliomis model \cite{Shliomis1972,Shliomis2002}.
The main difference between these two models is how they treat the rotation of the ferrofluid particles.
The Rosensweig model sees the angular momentum as an independent variable, while the Shliomis model considers the rotation of the particles as sources of magnetic torque that influence the magnetization field.
Our objective in this paper is to construct energy-stable spatial discretizations of the Shliomis model without magnetic diffusion and establish a priori error estimates for sufficiently regular solutions.

\subsection{The model and its energy law}\label{sec:intro-model}

In this paper we focus on the following version of the Shliomis model.
Let $\Omega$ be a bounded domain in $\RR^{d}$ ($d \in \{2,3\}$) and let $T>0$.
We seek
a velocity field $\bu$, pressure field $p$, magnetization field $\bm$ and magnetic field $\bh$ defined on $\Omega \times (0,T)$, such that
\begin{subequations}\label{ferrofluids-system}
\begin{align}
\bu_t - \nu \Delta \bu + (\bu \cdot \nabla)\bu + \nabla p & = \mu_0(\bm \cdot \nabla)\bh + \frac{\mu_0}{2}\bcurl(\bm\times \bh), \label{eq:momentum}\\
\div \bu & = 0, \label{eq:incomp}\\
\bm_t + (\bu \cdot \nabla)\bm - \frac{1}{2}\bcurl \bu \times \bm & = -\frac{1}{\tau}(\bm - \chi \bh) - \beta \bm\times(\bm\times\bh),
\label{eq:magnetization}\\
\div(\bm+\bh-\bh_a) &= 0, \label{eq:maxwell}\\*
\bcurl \bh & = \mathbf{0} \label{eq:curl-free}.
\end{align}
Here, the positive physical parameters $\nu$, $\mu_0$, $\tau$ and $\chi$ are, respectively, the \emph{kinematic viscosity} of the ferrofluid, the \emph{vacuum permeability}, the \emph{relaxation time}, and \emph{magnetic susceptibility}.
The coefficient $\beta\geq0$ controls the nonlinear magnetic relaxation; $\beta=0$ recovers the model without this term.

The model consists of the \emph{incompressible Navier--Stokes} equations \eqref{eq:momentum}-\eqref{eq:incomp} coupled through their forcing terms to the \emph{magnetization equation} \eqref{eq:magnetization} and the \emph{magnetostatic Maxwell equations} \eqref{eq:maxwell}-\eqref{eq:curl-free}.
The vector field $\bh_a$ is the \emph{applied magnetic field}, which is assumed to satisfy $\div \bh_a = 0$ and $\bcurl \bh_a = \mathbf{0}$.
The term $\bm_t + (\bu \cdot \nabla)\bm - \frac{1}{2}\bcurl \bu \times \bm$ is the \emph{corotational derivative} of the magnetization field $\bm$ and accounts not only for the standard material transport by the fluid flow but also for the rotation of the ferrofluid particles \cite{Shliomis1972}.
In turn, $\mu_0(\bm \cdot \nabla)\bh$ is the volumetric \emph{Kelvin force} that compels the ferrofluid magnetic particles towards regions with higher magnetic field \cite{Rosensweig2013}, while $\frac{\mu_0}{2}\bcurl(\bm\times \bh)$ accounts for the magnetic torque.
The nonlinear relaxation $-\beta\bm\times(\bm\times\bh)$ is orthogonal to $\bm$ and contributes a nonnegative spin dissipation to the energy law below.

We supplement \eqref{eq:momentum}-\eqref{eq:curl-free} with the boundary and initial conditions
\begin{gather}
\bu = \mathbf{0}, \qquad (\bm+\bh-\bh_a)\cdot \bnu = 0, \qquad \text{on $\partial\Omega$} \label{eq:bcs}\\
\bu(0) = \bu_0, \qquad \bm(0) = \bm_0, \qquad \text{on $\Omega$}, \label{eq:ics}
\end{gather}
where $\bnu$ denotes the outward unit normal to $\Omega$.
\end{subequations}
Formally, system \eqref{ferrofluids-system} obeys the following energy law \cite{NochettoTrivisaWeber2019,GiraultRaviart1986}:
\begin{equation}\label{eq:continuous-energy law}
\frac{1}{2}\frac{\dd}{\dd t}\left( \|\bu\|^2 + \frac{\mu_0}{\chi} \|\bm\|^2 + \mu_0 \|\bh\|^2 \right) + \nu \|\nabla \bu\|^2 + \frac{\mu_0}{\tau \chi}\|\bm - \chi \bh\|^2 +\mu_0\beta\|\bm\times\bh\|^2
= \mu_0 \int_\Omega \partial_t \bh_a \cdot \bh,
\end{equation}
which is achieved by testing the momentum equation \eqref{eq:momentum} with $\bu$, the incompressibility condition \eqref{eq:incomp} with $p$ and the magnetization equation \eqref{eq:magnetization} with $\frac{\mu_0}{\chi} (\bm - \chi \bh)$.
It hinges on several integrations by parts, the symmetry of $\nabla\bh$ (because $\bcurl\bh = \mathbf{0}$), the divergence-free conditions of $\bu$ and $\bm + \bh$ and uses the time-differentiated magnetostatic equation $(\bm_t, \bh) = - \frac{1}{2} \frac{\dd}{\dd t} \|\bh\|^2 + (\partial_t \bh_a, \bh)$.

\subsection{Analytical difficulties and magnetic regularization}\label{sec:intro-analysis}

The Shliomis, Rosensweig and related models have been thoroughly analyzed in the past decades;
we refer, for instance, to \cite{AmiratHamdacheMurat2008,AmiratHamdache2008,AmiratHamdache2009,AmiratHamdache2010,NochettoTrivisaWeber2019} for results concerning the existence of weak and strong solutions.
The analysis of the Shliomis and Rosensweig models is particularly complex due to the limited spatial control on $\bm$ and $\bh$ that the energy law provides;
namely $\bm, \bh \in \LL^\infty(0,T;\bL^2(\Omega))$ without any bounds for $\nabla\bm$ or $\nabla\bh$.
This makes it difficult to give an appropriate meaning to the Kelvin force $\mu_0 (\bm \cdot \nabla)\bh$, and consequently to pass to the limit in this term and the other nonlinear terms of the system in the context of, e.g., Faedo--Galerkin schemes.
Regarding the existence of weak solutions, these considerations prompted Amirat and Hamdache in \cite{AmiratHamdache2008} and Amirat, Hamdache and Murat \cite{AmiratHamdacheMurat2008} to respectively consider \emph{regularized} Shliomis and Rosensweig models by introducing a diffusion term $-\sigma \Delta \bm$, $\sigma > 0$ on the left-hand side of the magnetization equation, thus providing sufficient spatial control to circumvent the aforementioned difficulties.

The case $\sigma = 0$ is not only of mathematical interest but it is motivated by the underlying physical model.
Spatial diffusion of $\bm$ is absent from the original formulations of Shliomis and Rosensweig \cite{Shliomis2002,Shliomis1972,Rosensweig2013}, and the physical justification of adding $-\sigma\Delta\bm$ has been questioned \cite{NochettoSalgadoTomas2016,LanthalerWeber2026}.
In \eqref{eq:magnetization}, the evolution of $\bm$ is governed by material transport, corotation and local relaxation; adding spatial diffusion introduces an additional dissipative mechanism and requires further boundary conditions for $\bm$ \cite[Section~2]{NochettoSalgadoTomas2016}.
Direct approximation of the unregularized system therefore preserves the chosen magnetization dynamics without these additional modeling assumptions.

For the unregularized system (i.e.~$\sigma = 0$), Amirat and Hamdache established local strong solutions under stronger regularity assumptions on the data \cite{AmiratHamdache2009,AmiratHamdache2010}.
Later, Nochetto, Trivisa and Weber \cite{NochettoTrivisaWeber2019} showed the existence of global weak solutions to the Rosensweig model by considering a sequence of regularized systems and then recovering a solution as $\sigma \to 0$ by resorting to the notion of \emph{renormalized solutions} in the sense of DiPerna and Lions \cite{DipernaLions1989}, and standard compactness arguments.
A key step in \cite{NochettoTrivisaWeber2019} was to rewrite the Kelvin force as $\mu_0 \div(\bh \otimes (\bm + \bh))$, at the expense of a gradient term.

\subsection{Numerical approximation: results and limitations}\label{sec:intro-numerics}

The limited spatial control on $\bm$ and $\bh$ supplied by the energy law \eqref{eq:continuous-energy law} also conditions the design and analysis of numerical methods.
Although choosing appropriate finite-dimensional subspaces with sufficient regularity makes the discrete nonlinear terms well defined, spatial bounds provided via inverse inequalities may deteriorate as the mesh is refined.
Consequently, discrete energy stability akin to \eqref{eq:continuous-energy law} on its own does not yield the compactness needed to pass to the limit in the magnetic nonlinearities, nor sufficient control to compare a sufficiently regular exact solution with its discrete approximation.

In addition, there is a compatibility issue that further restricts the discretization:
the test function $\bm-\chi\bh$ for the magnetization equation in order to get \eqref{eq:continuous-energy law} may not have an admissible discrete counterpart when the magnetization $\bm$ and magnetic field $\bh$ are approximated in different spaces.

These difficulties have motivated several approaches and simplifications, such as discretizing the regularized system with magnetic diffusion $-\sigma \Delta \bm$, $\sigma > 0$, considering a simpler magnetostatic coupling, using compatible mixed spaces, and reformulating the magnetic equations to permit independent testing of $\bm$ and $\bh$.

Concerns regarding computational efficiency have also motivated energy-stable schemes designed to reduce the algebraic coupling \cite{ZhangHeYang2021,ZhangPanHeYang2025,ZhangHeYang2025,DongHuangHuangYang2026}.
These works establish discrete energy stability and assess accuracy through numerical experiments, but do not provide convergence proofs for the resulting schemes, with or without rates.
In the discussion below, we focus on how the treatment of the magnetic equations affects convergence and error estimates.

\subsubsection*{Early schemes without magnetic regularization}

For the case of $\sigma = 0$, we highlight the work of Nochetto, Salgado and Tomas \cite{NochettoSalgadoTomas2016}, who designed an energy-stable fully discrete scheme for the Rosensweig model coupled to the magnetostatic equations.
Their construction uses a fully discontinuous discrete space for $\bm$, and a standard $\HH^1(\Omega)$-conforming space for the magnetic potential $\phi$, and recovers the magnetic field via the identity $\bh = \nabla \phi$.
They discretize the convective and advective operators involved in the spirit of the discontinuous Galerkin methods of Di Pietro and Ern \cite{DiPietroErn2010}.
Their method reproduces the cancellations of the continuous energy law, and consequently they are able to establish the existence of discrete solutions.
Yet, the lack of $\LL^2(0,T;\bH^1(\Omega))$ control on $\bm$ and $\bh$ prevented them from establishing convergence of the full model.
Their convergence analysis focused, instead, on the simplified scheme resulting from prescribing the magnetic field $\bh$.
Building on \cite{NochettoSalgadoTomas2016}, the same authors developed a phenomenological diffuse-interface model for two-phase ferrofluid flows and designed an energy-stable numerical approximation in \cite{NochettoSalgadoTomas2016TwoPhase}.

We highlight \cite{ZhangHeYang2023} as an important source of inspiration for the development of the present manuscript, as attested in \cref{sec:equivalent-formulation}.
Zhang, He and Yang introduce a magnetic reformulation for a two-phase Shliomis model without magnetic diffusion and derive an unconditionally energy-stable fully discrete scheme.
No convergence analysis was carried out for this scheme.
Their main innovation is to combine the magnetostatic equation, its time derivative and the magnetization equation into an evolution relation for the magnetic potential $\phi$, permitting independent testing of $\bm$ and $\bh$.

\subsubsection*{Error estimates for regularized or simplified models}

Concerning models with magnetic regularization (i.e.~$\sigma>0$), Wu and Xie constructed in \cite{WuXie2023,WuXie2025} energy-stable mixed finite element methods for, respectively, the Shliomis and Rosensweig models.
They showed the discrete solvability of their schemes and derived optimal-order error estimates for spatial semidiscretizations and fully discrete schemes under suitable regularity and additional stability assumptions.
Their convergence analysis relies on the spatial control supplied by magnetic diffusion: absorbing nonlinear terms against the $\sigma$-weighted dissipation introduces inverse powers of $\sigma$ into the coefficients of the Gr\"onwall argument.
These coefficients yield bounds containing factors of the form $\exp\left(C\sigma^{-k}\right)$ for some $k>0$, which are nonuniform as $\sigma\to0$.
Thus these estimates do not establish convergence for \eqref{ferrofluids-system} under any polynomial scaling between $h$ and $\sigma$.
Moreover, convergence for fixed $\sigma>0$ concerns a modified continuous model; approximating \eqref{ferrofluids-system} by such a scheme would additionally require control of the regularization error together with the $\sigma$-dependent discretization error.

Regarding the temporal semidiscretization of the regularized Shliomis model, Dong, Huang and Huang \cite{DongHuangHuang2025} proposed an energy-stable scheme and derived its corresponding error estimates.
As in \cite{WuXie2023,WuXie2025}, their estimates are nonuniform as $\sigma \to 0$.
We refer the reader to \cite{DongHuangHuangTang2025,KeramHuangHe2026} for further convergence results and time discretization techniques for the regularized Rosensweig model.

Other a priori error analyses without magnetic regularization concern, however, simplified magnetic couplings or the stationary versions of the Shliomis model.
For instance, Mao and Sun \cite{MaoSun2025} establish unconditional error estimates for a first-order fully discrete finite element approximation of a simplified Shliomis model with, akin to \cite{NochettoSalgadoTomas2016,NochettoSalgadoTomas2016TwoPhase}, prescribed effective magnetic field.
In turn, Wu and Xie \cite{WuXie2023Parallel} analyze mixed finite element methods under the simplifying assumption that $\bm$ is parallel to $\bh$, effectively obtaining a decoupled system.
For stationary magnetic flows, Benavides, Caucao, Gatica and Sobral \cite{BenavidesCaucaoGaticaSobral2026} develop two different mixed formulations posed in reflexive Banach spaces, analyze their Galerkin approximations, and propose iterative approximations of a simplified magnetization equation in which the advection $(\bu \cdot \nabla)\bm$ is neglected.

\subsubsection*{Error estimates for the coupled unregularized model}

To the best of our knowledge, Benavides, Nochetto and Shakipov \cite{BenavidesNochettoShakipov2026} provided the first a priori error estimates for a spatial discretization of a coupled, time-dependent ferrofluid model without magnetic diffusion and with the magnetic field determined by the magnetostatic equations.
Their construction represents the magnetic field and magnetic induction through two scalar potentials, thereby enforcing the curl and divergence constraints \eqref{eq:curl-free} and \eqref{eq:maxwell} exactly and recovering the cancellations needed for a discrete perturbation estimate.
This establishes quantitative convergence without relying on magnetic diffusion or prescribing the effective magnetic field.

Despite this advance, the approach in \cite{BenavidesNochettoShakipov2026} has important limitations.
Its two-scalar-potential discretization and error analysis are inherently restricted to two space dimensions.
The scalar representation of the divergence-free magnetic induction is specific to 2D; an analogous construction in 3D would require a vector potential and a treatment of its gauge freedom.
The reduced model considered there also omits the rotational coupling terms present in \eqref{eq:momentum} and \eqref{eq:magnetization}.
In addition, the discrete induction potential $\Psi$ must satisfy $\Psi=0$ on $\partial\Omega$, while its normal derivative remains unconstrained.
For the $C^1$-conforming elements used there, imposing this trace condition strongly without inadvertently constraining derivative degrees of freedom is highly nontrivial \cite{KirbyMitchell2019}.
The implementation in \cite[Section~4]{BenavidesNochettoShakipov2026} therefore resorts to an Uzawa--Powell iteration with Nesterov acceleration at each time step to enforce $\Psi=0$ to a prescribed tolerance.
The error analysis also relies on auxiliary projection stability properties that are not fully verified for  the concrete finite element spaces considered there.

\subsection{Contributions of the present work}\label{sec:intro-contributions}

Recall that the error estimates for magnetically regularized models are nonuniform as $\sigma\to0$ and that the physical justification of the added magnetic diffusion has been questioned \cite{NochettoSalgadoTomas2016,LanthalerWeber2026}.
Together with the dimensional and modeling restrictions of the two-potential approach in \cite{BenavidesNochettoShakipov2026}, these concerns motivate the present work.
We develop an energy-stable conforming spatial semidiscretization of the unregularized Shliomis system \eqref{ferrofluids-system} and establish a priori error estimates for sufficiently regular solutions.
The formulation applies in two and three space dimensions, retains the rotational coupling and nonlinear magnetic relaxation, and incorporates the magnetic boundary condition \eqref{eq:bcs} naturally.
To control the magnetic nonlinearities, we combine consistent stabilization of the Maxwell constraint \eqref{eq:maxwell} with a cancellation based on exact incompressibility and the $\HH^2$-conformity of the magnetic potential.

\subsubsection*{A formulation in two and three dimensions}
We approximate $\bm$ by an independent field $\bM$ and $\phi$ by a single scalar potential $\Phi$, coupled to an exactly divergence-free approximation $\bU$ of $\bu$.
No potential for the magnetic induction is needed, so the same choice of magnetic unknowns applies in both two and three space dimensions.
Building on the reformulation in \cite{ZhangHeYang2023}, the discrete equations for $\bM$ and $\Phi$ can be tested independently.
The nonlinear relaxation is incorporated into both equations so that their energy balance includes the spin dissipation $\mu_0\beta\|\bM\times\nabla\Phi\|^2$.
The variational flux equation in \cref{sec:equivalent-formulation} incorporates the magnetic boundary condition \eqref{eq:bcs} naturally, avoiding the auxiliary Uzawa--Powell iteration used in \cite{BenavidesNochettoShakipov2026}.

We choose an $\HH^2$-conforming space for $\Phi$, so that $\bH:=\nabla\Phi\in\bH^1(\Omega)$ and $\nabla\bH=\nabla^2\Phi$ is symmetric.
These properties permit the global integrations by parts used in the error analysis without additional interior-face terms.

\subsubsection*{Consistent stabilization and error analysis}

The central analytical step is to allow a nonzero discrete Maxwell residual $r_h:=\Delta\Phi+\div\bM=\div(\bH+\bM-\bh_a)$, the discrete counterpart of \eqref{eq:maxwell}, and control it through consistent stabilization.
This replaces the exact enforcement of \eqref{eq:maxwell} built into the two-potential construction of \cite{BenavidesNochettoShakipov2026}.
Since the stabilization terms vanish on exact solutions of \eqref{ferrofluids-system}, they are consistent with the continuous model and do not introduce a regularization error;
hence their coefficients can remain fixed under mesh refinement.
Their contribution to the discrete energy law provides control of $r_h$.

The key cancellation below explains how this residual control enters the nonlinear error analysis.
Let $\bV\in\bH^1_0(\Omega)$, $\mathbf{Z}\in\bH^1(\Omega)$, and $\Psi\in\HH^2(\Omega)$ be arbitrary conforming discrete functions satisfying $\div\bV=0$.
The zero trace of $\bV$ eliminates all boundary terms in the integrations by parts below.
Therefore, the incompressibility $\div \bV = 0$ gives
\begin{equation*}
\int_\Omega ((\bV\cdot\nabla)\nabla\Psi)\cdot\nabla\Psi\dd x
=\frac12\int_\Omega \bV\cdot\nabla|\nabla\Psi|^2\dd x=0.
\end{equation*}
Using this vanishing property and the crucial symmetry of $\nabla^2\Psi$, we obtain
\begin{equation}\label{eq:intro-magnetic-cancellation}
\begin{aligned}
\int_\Omega ((\bV\cdot\nabla)\mathbf{Z})\cdot\nabla\Psi\dd x
&=\int_\Omega ((\bV\cdot\nabla)(\mathbf{Z}+\nabla\Psi))\cdot\nabla\Psi\dd x
&&\text{(incompressibility)}\\
&=-\int_\Omega ((\bV\cdot\nabla)\nabla\Psi)\cdot(\mathbf{Z}+\nabla\Psi)\dd x
&&\text{(integration by parts, }\div\bV=0\text{)}\\
&=-\int_\Omega \bigl((\mathbf{Z}+\nabla\Psi)\cdot\nabla\bigr)\nabla\Psi\cdot\bV\dd x
&&\text{(Hessian symmetry)}\\
&=\int_\Omega \bigl((\mathbf{Z}+\nabla\Psi)\cdot\nabla\bigr)\bV\cdot\nabla\Psi\dd x\\
&\quad+\int_\Omega (\div\mathbf{Z}+\Delta\Psi)(\bV\cdot\nabla\Psi)\dd x
&&\text{(integration by parts).}
\end{aligned}
\end{equation}

In the error analysis of \cref{sec:a-priori}, we apply \eqref{eq:intro-magnetic-cancellation} with $(\bV,\mathbf{Z},\Psi)=(\obU,\bE_\bm,E_\phi)$, where $\obU$ is an auxiliary projection of $\bu$ and $\bE_\bm$ and $E_\phi$ are discrete magnetization and potential errors.
Consequently, this yields
\begin{equation*}
\int_\Omega ((\obU \cdot \nabla) \bE_\bm) \cdot \nabla E_\phi
= \int_\Omega ((\bE_\bm + \nabla E_\phi) \cdot \nabla) \obU \cdot \nabla E_\phi
+ \int_\Omega (\div \bE_\bm + \Delta E_\phi)(\obU \cdot \nabla E_\phi),
\end{equation*}
which explicitly transfers derivatives from $\bE_\bm$ to $\obU$ and collects the remaining magnetization derivatives into $\div\bE_\bm+\Delta E_\phi$, precisely the quantity controlled by the stabilization in the discrete error equation.
This is exactly what we require to close the error analysis without imposing separate bounds for $\nabla\bE_\bm$ or $\nabla^2E_\phi$ that are not provided by the discrete energy law.

We prove global solvability of the semidiscrete scheme.
Under additional approximation and stability assumptions on the discrete spaces and auxiliary projections, we derive a priori error estimates for sufficiently regular exact solutions.
We provide concrete finite element spaces in 2D that satisfy these approximation and stability assumptions.
For the 3D choices discussed in \cref{sec:specific-FES}, the error estimates remain conditional on the velocity-projection stability described in \cref{sec:u-p}.

That being said, the stabilization also entails potential computational and analytical drawbacks.
Its fourth-order contribution to the equation for $\Phi$ may worsen the conditioning of the algebraic systems arising after time discretization; a quantitative analysis of this effect and the development of effective preconditioners are left for future work.
Moreover, the present error analysis bounds the stabilization consistency terms through $\bH^1$-approximation of $\bm$ and $\HH^2$-approximation of $\phi$, which complicates the derivation of sharper $\LL^\infty(0,T;\bL^2(\Omega))$ error estimates for $\bm$ and $\bh$ in the spirit of Wheeler's trick.
Nevertheless, our three-dimensional experiment exhibits nearly third-order convergence for both magnetic variables, suggesting that sharper error estimates may be possible.
Whether a projection argument adapted to the coupled stabilization can capture this behavior remains an open question.

\subsubsection*{An averaged $\HH^2$-conforming projector}

An additional contribution of this paper is the construction and analysis of an averaged projector $J_h$ onto the underlying $\HH^2$-conforming macroelement spaces.
Building upon the averaging principle of Girault and Scott \cite{GiraultScott2002}, the construction utilizes volume averages, does not require pointwise derivatives, and is consequently well-defined for all functions in $\LL^1(\Omega)$.
For the full and reduced Hsieh--Clough--Tocher and Walkington families described in \cref{sec:magnetic-potential}, \cref{lem:potential-interpolation} establishes optimal approximation in $\WW^{j,p}(\Omega)$ ($j=0,1,2$) and $\WW^{2,p}(\Omega)$-stability for every $1\leq p\leq\infty$ on general shape-regular triangulations.
These estimates are of independent interest, and their $p=\infty$ case proves the uniform stability needed for the auxiliary magnetic potential in our nonlinear error analysis.
This supplies an explicitly constructed alternative to the Ritz-projection stability assumptions in Benavides, Nochetto and Shakipov \cite[Section~3.7]{BenavidesNochettoShakipov2026}.

\subsection*{Structure of the paper}

The rest of this paper is organized as follows.
In \cref{sec:prelim}, we introduce some standard notation for functional spaces and differential operators to be used throughout the rest of this paper.
Building upon \cite{ZhangHeYang2023}, in \cref{sec:equivalent-formulation}, we rewrite system \eqref{ferrofluids-system} by deriving an evolution equation for the magnetic potential.
In \cref{sec:discretization}, we introduce the abstract semidiscrete scheme and establish its solvability and energy stability.
\cref{sec:a-priori} is devoted to deriving a priori error estimates for smooth enough solutions.
Concrete finite element spaces and auxiliary projections are discussed in detail in \cref{sec:specific-FES}, with the exception of the proof of the stability and approximation properties of $J_h$, which is postponed to \cref{app:proof-interp}.
Finally, in \cref{sec:experiments}, we present numerical experiments assessing spatial convergence in two and three dimensions.

\subsection{Preliminary notation and identities}\label{sec:prelim}
Let $\Omega \subseteq \RR^{d}$ ($d \in \{2,3\}$) be a bounded and simply connected polygonal (resp.~polyhedral) Lipschitz domain.
We use the notation  $\LL^q(\Omega)$ and $\WW^{m,q}(\Omega)$ for the scalar Lebesgue and Sobolev spaces with integrability $q \in [1,\infty]$ and differentiability $m \geq 0$; we denote the corresponding norms respectively by $\|\cdot\|_{0,q;\Omega}$ and $\|\cdot\|_{m,q;\Omega}$. We also write $\WW^{0,q}(\Omega) = \LL^q(\Omega)$ and $\HH^m(\Omega) := \WW^{m,2}(\Omega)$ with corresponding norm $\|\cdot\|_{m,\Omega} := \|\cdot\|_{m,2;\Omega}$;
for $m=0$ we simply write $\|\cdot\|_\Omega$.
The associated Sobolev seminorms are denoted by using $|\cdot|$ instead of $\|\cdot\|$.
When there is no ambiguity, we will omit the explicit dependence on $\Omega$.
We denote by $\WW^{m,q}_\#(\Omega)$ the closed subspace of scalar-valued functions $v$ in $\WW^{m,q}(\Omega)$ that satisfy $\int_\Omega v =0$.
In turn, $\WW^{1,q}_0(\Omega)$ denotes the closed subspace of $\WW^{1,q}(\Omega)$ of functions with vanishing trace.
For any scalar function space $V$ defined on $\Omega$, we denote by $\bV$ its $d$-vectorial counterpart, and we use the same notation for its norms.

For any normed space $V$ its dual space is denoted by $V'$, whose induced norm is given by $\|f\|_{V'} := \sup_{0 \neq v \in V} \frac{f(v)}{\|v\|_V}$.
We denote the duality pairing between $V'$ and $V$ by $\langle \cdot, \cdot \rangle_{V' \times V}$;
when there is no ambiguity, we will simply write $\langle \cdot,\cdot \rangle$.

Throughout this article, for any integrability parameter $p \in [1,\infty]$, we denote by $p'$ its conjugate exponent.
For any pair of functions $v \in \LL^p(\Omega)$ and $w \in \LL^{p'}(\Omega)$ we write $(v,w) := \int_\Omega v \cdot w$.
We naturally extend this notation for vector- and tensor-valued functions.

For the rest of this section, lowercase Greek letters denote scalar-valued functions, whereas bold lowercase Roman letters denote vector-valued functions.
For $d=3$, we define
\begin{equation*}
\bcurl \bv := [\partial_2 v_3 - \partial_3 v_2, \partial_3 v_1 - \partial_1 v_3, \partial_1 v_2 - \partial_2 v_1],
\end{equation*}
and the vector product $\bv \times \bw$ between $\bv$ and $\bw$ is defined in the standard way.

For $d = 2$, we define
\begin{align*}
\bcurl \varphi & := [\partial_2 \varphi, -\partial_1 \varphi], \qquad \curl \bv := \partial_1 v_2 - \partial_2 v_1,\\
\bv \times \bw & := v_1 w_2 - v_2 w_1, \qquad \omega \times \bv := [-\omega v_2, \omega v_1].
\end{align*}
We also set $\bv\times\omega:=-\omega\times\bv=[\omega v_2,-\omega v_1]$, so that the double cross product in \eqref{eq:magnetization} is well defined in two dimensions.
The notation introduced above allows us to extend the classical \emph{triple product} identity for $d=3$,
\begin{equation}\label{eq:triple-product-identity}
\bu \cdot (\bv \times \bw) = (\bu \times \bv) \cdot \bw,
\end{equation}
to $d=2$:
\begin{equation*}
\omega \, (\bv \times \bw) = (\omega \times \bv) \cdot \bw.
\end{equation*}
For $d\in\{2,3\}$ and $\ba,\bb,\bc\in\RR^d$, the cross products satisfy
\begin{equation}\label{eq:spin-identities}
-\bigl(\ba\times(\ba\times\bb)\bigr)\cdot\bc=(\ba\times\bb)\cdot(\ba\times\bc),
\quad
\bigl(\ba\times(\ba\times\bb)\bigr)\cdot\ba=0,
\quad
\bigl(\ba\times(\ba\times\bb)\bigr)\cdot\bb=-|\ba\times\bb|^2.
\end{equation}
In two dimensions, the product on the right-hand side of the first identity is the product of two scalars.

We finish this section by introducing the trilinear form $\cB$ defined by
\begin{equation*}
\cB(\bu, \bv, \bw) :=  ((\bu \cdot \nabla) \bv, \bw)_\Omega,
\end{equation*}
for sufficiently smooth vector-valued functions such that the integral above makes sense.
Integration by parts yields
\begin{equation}\label{eq:general-IBP}
\cB(\bu,\bv,\bw) + \cB(\bu,\bw,\bv) = \int_{\partial\Omega} (\bu \cdot \bnu)(\bv \cdot \bw) - (\div\bu, \bv \cdot \bw)_\Omega.
\end{equation}
In particular, if $\bu$ satisfies $\div \bu = 0$ in $\Omega$ and $\bu \cdot \bnu = 0$ on $\partial\Omega$, then
\begin{equation}\label{eq:skew-symmetry}
\cB(\bu,\bv,\bw) = - \cB(\bu,\bw,\bv).
\end{equation}
On the other hand, if $\bv = \nabla\psi$ for some twice-differentiable scalar function $\psi$, then the symmetry of the Hessian $\nabla^2\psi$ implies
\begin{equation}\label{eq:symmetry}
\cB(\bu,\bv,\bw) = \cB(\bw,\bv,\bu).
\end{equation}
%

%

\begin{remark}[two-dimensional model]
For $d=2$, the model in \eqref{ferrofluids-system} and the subsequent analysis use $\curl$ whenever its argument is a vector field, including velocities, test functions, error fields, and the applied field.
The operator $\bcurl$ is retained on scalar arguments, in particular on $\bm\times\bh$ in the momentum equation.
Thus $\bcurl\bu\times\bm$ becomes $(\curl\bu)\times\bm$, the applied-field condition becomes $\curl\bh_a=0$, and rotational pairings such as $(\bm\times\bh,\curl\bv)$ are scalar $\LL^2$ products.
In particular, $-\bm\times(\bm\times\bh)=(\bm\times\bh)\times\bm$ is a planar vector, while $\|\bm\times\bh\|$ is the scalar $\LL^2$-norm.
With the planar products defined above, the same integration-by-parts and cancellation identities utilized throughout the rest of this article are still valid.
\end{remark}

\section{Equivalent formulation}\label{sec:equivalent-formulation}
In this section, building on \cite{ZhangHeYang2023}, we equivalently rewrite \eqref{ferrofluids-system} by deriving an evolution equation for the magnetic potential that allows $\bm$ and $\bh$ to be tested independently when they lie in different finite-dimensional spaces;
this is a major difference with respect to \cite{BenavidesNochettoShakipov2026}.
The calculations in this section are formal: we assume that all fields are sufficiently smooth up to the fixed boundary to justify the differentiations, normal traces, and integrations by parts below.
We first notice that, because $\Omega$ is simply connected, condition \eqref{eq:curl-free} is equivalent to the existence of a scalar field $\phi$ with $\int_\Omega \phi = 0$ such that $\bh = \nabla \phi$.
Because we will differentiate the magnetic potential $\phi$ in time, we now must determine its initial condition.
Since $\bm$ and $\bh = \nabla\phi$ are related via the magnetostatic equation \eqref{eq:maxwell} and the normal-flux condition \eqref{eq:bcs}, the initial potential $\phi_0 \in \HH^1(\Omega) \cap \LL^2_\#(\Omega)$ is determined by
\begin{equation}\label{eq:initial-Maxwell-compatibility}
(\bm_0+\nabla\phi_0-\bh_a(0),\nabla\psi)=0
\qquad\text{for every }\psi\in \HH^1_\#(\Omega).
\end{equation}
For $\bm_0,\bh_a(0)\in \bL^2(\Omega)$, this weak Neumann problem has a unique mean-zero solution.
Thus $\phi_0$ is fully determined by $\bm_0$ and $\bh_a(0)$.

Set
\begin{equation}\label{eq:maxwell-defect}
\bg:=\bm+\nabla\phi-\bh_a.
\end{equation}
The magnetostatic and normal-flux constraints in \eqref{eq:maxwell} and \eqref{eq:bcs} read
\begin{equation}\label{eq:maxwell-strong-constraints}
\div\bg=0 \quad\text{in }\Omega,
\qquad
\bg\cdot\bnu=0 \quad\text{on }\partial\Omega.
\end{equation}
Using \eqref{eq:magnetization}, we obtain the identity
\begin{equation}\label{eq:augmented-flux}
\partial_t\bg+\frac{1}{\tau}\bg
=\nabla\phi_t-(\bu\cdot\nabla)\bm
  +\frac12(\bcurl\bu)\times\bm +\frac{1+\chi}{\tau}\nabla\phi
  -\beta\bm\times(\bm\times\nabla\phi)
  -\partial_t\bh_a-\frac1\tau\bh_a
  =:\bF.
\end{equation}
Applying $\partial_t+\tau^{-1}$ to \eqref{eq:maxwell-strong-constraints} therefore gives the augmented equation and its boundary condition in strong form:
\begin{equation}\label{eq:augmented-strong}
\div\bF=0 \quad\text{in }\Omega,
\qquad
\bF\cdot\bnu=0 \quad\text{on }\partial\Omega.
\end{equation}
In particular, since $\div\bh_a=0$, the interior equation in \eqref{eq:augmented-strong} can be written explicitly as
\begin{equation}\label{eq:augmented-potential-strong}
\partial_t\Delta\phi+\frac{1+\chi}{\tau}\Delta\phi
=\div\left( (\bu\cdot\nabla)\bm
 -\frac12(\bcurl\bu)\times\bm
 +\beta\bm\times(\bm\times\nabla\phi)\right).
\end{equation}
The applied field enters through the boundary condition in \eqref{eq:augmented-strong}.

Conversely, suppose that the magnetization equation \eqref{eq:magnetization}, and \eqref{eq:augmented-flux} and \eqref{eq:augmented-strong} hold.
Taking the divergence and the normal component of \eqref{eq:augmented-flux} shows that $r:=\div\bg$ and $q:=\bg\cdot\bnu$ satisfy
\begin{equation}\label{eq:constraint-propagation}
\partial_t r+\frac1\tau r=0 \quad\text{in }\Omega,
\qquad
\partial_t q+\frac1\tau q=0 \quad\text{on }\partial\Omega.
\end{equation}
Thus $r(t)=e^{-t/\tau}r(0)$ and $q(t)=e^{-t/\tau}q(0)$.
Under the smoothness assumption above, the initial compatibility condition in \eqref{eq:initial-Maxwell-compatibility} gives $r(0)=q(0)=0$.
Consequently, \eqref{eq:maxwell-strong-constraints} (equivalently \eqref{eq:maxwell} and the second equation of \eqref{eq:bcs}) holds for all times.

In conclusion, recalling that $\bh = \nabla\phi$, we are able to equivalently write the bulk equations and boundary conditions of \eqref{ferrofluids-system} as
\begin{align}
\bu_t - \nu \Delta \bu + (\bu \cdot \nabla)\bu + \nabla p & = \mu_0(\bm \cdot \nabla)\bh + \frac{\mu_0}{2}\bcurl(\bm\times \bh), \tag{\ref{eq:momentum}}\\
\div \bu & = 0, \tag{\ref{eq:incomp}}\\
\bm_t + (\bu \cdot \nabla)\bm - \frac{1}{2}\bcurl \bu \times \bm & = -\frac{1}{\tau}(\bm - \chi \bh) -\beta\bm\times(\bm\times\bh), \tag{\ref{eq:magnetization}}\\
\bF +\beta\bm\times(\bm\times\nabla\phi) & = \nabla\phi_t-(\bu\cdot\nabla)\bm +\frac12(\bcurl\bu)\times\bm +\frac{1+\chi}{\tau}\nabla\phi -\partial_t\bh_a-\frac1\tau\bh_a, \label{eq:full-magnetic-flux}\\
  \div\bF & = 0, \label{eq:full-magnetic-incomp}\\
\int_\Omega \phi & = 0 \label{eq:pot-normaliz},
\end{align}
and
\begin{equation}\label{eq:total-magnetic-flux}
\bu = \mathbf{0}, \quad \text{on $\partial\Omega$}, \qquad \bF \cdot \bnu = 0, \quad \text{on $\partial\Omega$}.
\end{equation}
The evolution formulation is supplemented with the initial conditions \eqref{eq:ics} for $\bu$ and $\bm$, and $\phi(0)=\phi_0$, where $\phi_0$ is the mean-zero solution of \eqref{eq:initial-Maxwell-compatibility}.
This compatible initial potential is required for the constraint defects in \eqref{eq:constraint-propagation} to vanish for all times.

\section{Semidiscretization}\label{sec:discretization}
In this section, we present our abstract semidiscrete scheme under general assumptions that will be specified throughout.
We prove an energy law consistent with \eqref{eq:continuous-energy law} and global existence and uniqueness for the reduced solution $(\bU,\bM,\Phi)$.
The pressure is determined modulo the nullspace of the transpose of the discrete divergence operator.

We begin by considering general finite dimensional spaces
\begin{equation}\label{eq:fin-dim-subspaces}
\bbU \times \bbP \subset \bW^{1,\infty}_0(\Omega) \times \LL^\infty_\#(\Omega), \qquad
\bbM \subset \bW^{1,\infty}(\Omega), \qquad
\bbX \subset \WW^{2,\infty}_\#(\Omega),
\end{equation}
for, respectively, the velocity-pressure pair $(\bu,p)$, the magnetization field $\bm$ and the scalar potential $\phi$.
We will specify these spaces later in \cref{sec:specific-FES};
for the moment we simply assume the following.
\begin{assump}[strongly divergence-free pair]\label{assump:strongly-div-free}
We assume that $(\bbU,\bbP)$ is \emph{strongly divergence-free}, i.e., if $\bV \in \bbU$ is such that $(Q, \div\bV) = 0$ for each $Q \in \bbP$, then $\bV \in \bbUdiv$, where
\begin{equation*}
\bbUdiv := \{\bV \in \bbU: \div \bV = 0, \quad \text{on $\Omega$}\},
\end{equation*}
\end{assump}

The $\HH^2$-conformity of $\bbX$ is essential: its elements have globally defined weak Hessians, which are symmetric almost everywhere.
These properties and \cref{assump:strongly-div-free} justify the global integration by parts used in the energy and error identities of this section and \cref{sec:a-priori} below, without the occurrence of interior-face jump terms.

We now introduce the abstract semidiscrete scheme for the formulation presented in \cref{sec:equivalent-formulation}, which is formally equivalent to the system \eqref{ferrofluids-system}.
Let $\kappa > 0$ be a fixed stabilization parameter independent of the chosen finite dimensional spaces \eqref{eq:fin-dim-subspaces}.
We seek a time-dependent tuple $(\bU,P,\bM,\Phi)$ with values in $\bbU \times \bbP \times \bbM \times \bbX$, such that for all test functions $(\bV_\bu,V_p,\bV_\bm,V_\phi) \in \bbU \times \bbP \times \bbM \times \bbX$ and almost every $t \in (0,T)$, the following equations hold
\begin{subequations}\label{all-semidiscrete-scheme}
\begin{equation}\label{eq:semidiscrete-scheme}
\begin{aligned}
(\partial_t \bU,\bV_\bu) + \nu(\nabla \bU, \nabla \bV_\bu) + \cB(\bU,\bU,\bV_\bu) -(P, \div \bV_\bu) & = \mu_0 \cB(\bM,\nabla\Phi,\bV_\bu) + \frac{\mu_0}{2} (\bM \times \nabla\Phi, \bcurl \bV_\bu), \\
(V_p, \div \bU) & = 0,\\[0.5ex]
(\partial_t \bM, \bV_\bm) + \cB(\bU, \bM, \bV_\bm) - \frac{1}{2} (\bcurl \bU, \bM \times \bV_\bm) &\\
\quad +\kappa \chi\, (\Delta\Phi + \div\bM, \div \bV_\bm)
 & = - \frac{1}{\tau} (\bM - \chi \nabla\Phi, \bV_\bm) +\beta(\bM\times\nabla\Phi,\bM\times\bV_\bm),\\[0.5ex]
(\nabla \partial_t \Phi, \nabla V_\phi) + \frac{1+\chi}{\tau}(\nabla \Phi, \nabla V_\phi) - \cB(\bU,\bM, \nabla V_\phi) &\\
\quad + \kappa(\Delta\Phi + \div\bM, \Delta V_\phi)
+ \frac{1}{2}(\bcurl \bU, \bM \times \nabla V_\phi) & = \left(\frac{1}{\tau}\bh_a + \partial_t \bh_a, \nabla V_\phi\right) -\beta(\bM\times\nabla\Phi,\bM\times\nabla V_\phi).
\end{aligned}
\end{equation}
We supplement these equations with the initial conditions
\begin{equation}\label{eq:discrete-initial-conditions}
\bU(0) = \bU_0, \qquad \bM(0) = \bM_0, \qquad \Phi(0) = \Phi_0,
\end{equation}
where $(\bU_0, \bM_0, \Phi_0) \in \bbUdiv \times \bbM \times \bbX$ are appropriate discrete approximations of the continuous initial data $\bu_0$, $\bm_0$ and $\phi_0$.
Taking $\bV_\bu \in \bbUdiv$ in \eqref{eq:semidiscrete-scheme} eliminates the discrete pressure $P$ and leads to a reduced system.
\end{subequations}

\begin{remark}[consistency terms and the magnetostatic equation \eqref{eq:maxwell}]
Notice that, unlike the original formulation \eqref{ferrofluids-system}, the equivalent formulation introduced in \cref{sec:equivalent-formulation} does not explicitly impose the magnetostatic equation $\div(\bm+\bh-\bh_a) = 0$.
The addition of the stabilization terms $\kappa \chi\, (\Delta\Phi + \div\bM, \div \bV_\bm)$ and $\kappa(\Delta\Phi + \div\bM, \Delta V_\phi)$ to the formulation in \eqref{all-semidiscrete-scheme} provides $\LL^2(0,T;\LL^2(\Omega))$-control of the residual $\Delta\Phi+\div\bM$, as shown below in \eqref{eq:disc-energy-stability}.
Since $\div\bh_a=0$, this is the residual of the interior magnetostatic equation in \eqref{eq:maxwell}.
This will be essential for the a priori error analysis carried out in \cref{sec:a-priori}.
\end{remark}

\begin{lemma}[local existence and uniqueness for the reduced scheme]\label{lem:existence-discrete-solutions}
Suppose that the spaces in \eqref{eq:fin-dim-subspaces} satisfy \cref{assump:strongly-div-free} and that $(\bU_0,\bM_0,\Phi_0)\in\bbUdiv\times\bbM\times\bbX$.
For every $\bh_a\in\HH^1(0,T;\bL^2(\Omega))$, there exists $0<\widetilde T\le T$ such that the reduced scheme obtained from \eqref{all-semidiscrete-scheme} by taking $\bU,\bV_\bu\in\bbUdiv$ and eliminating $P$ admits a unique solution
\begin{equation*}
(\bU,\bM,\Phi)\in AC([0,\widetilde T];\bbUdiv\times\bbM\times\bbX).
\end{equation*}
\begin{proof}
Restrict $\bV_\bu$ in \eqref{eq:semidiscrete-scheme} to $\bbUdiv$.
The mass matrices associated to $(\partial_t \bU, \bV_\bu)$ and $(\partial_t \bM, \bV_\bm)$ are positive definite, and so is the stiffness matrix associated with $(\nabla\partial_t\Phi,\nabla V_\phi)$, since $\|\nabla\Psi\|=0$ and $\Psi\in\bbX\subset\HH^1_\#(\Omega)$ imply $\Psi=0$.
Set $N:=\dim\bbUdiv+\dim\bbM+\dim\bbX$.
By expanding $(\bU,\bM,\Phi)$ in fixed bases of these spaces and inverting the three aforementioned matrices, the reduced system can be recast as the nonautonomous ODE
\begin{equation}\label{eq:coeff-ODE}
\dot{\bz}(t)=\bf(\bz(t))+\bg(t),
\qquad \bz(0)=\bz_0,
\end{equation}
where $\bz(t)\in\RR^{N}$ is the coefficient vector of $(\bU,\bM,\Phi)$, and $\bz_0$ is that of $(\bU_0,\bM_0,\Phi_0)$.
The function $\bf:\RR^{N}\to\RR^{N}$ is polynomial in the entries of $\bz$, with total degree at most three, while $\bg\in\LL^2(0,T;\RR^{N})$ comes from the forcing $\tau^{-1}\bh_a+\partial_t\bh_a\in\LL^2(0,T;\bL^2(\Omega))$ in the potential equation.
For each $R>0$, there are constants $M_R,L_R<\infty$ such that, for $|\bz|,|\bw|\le R$ and almost every $t$,
\begin{equation*}
|\bf(\bz)+\bg(t)|\le M_R+|\bg(t)|,
\qquad
|\bf(\bz)-\bf(\bw)|\le L_R|\bz-\bw|.
\end{equation*}
Since $\bg\in\LL^2(0,T;\RR^{N})\subset\LL^1(0,T;\RR^{N})$, these bounds verify the hypotheses for local existence and uniqueness in the sense of Carath\'eodory; see \cite[Section~2.3, p.~42]{Teschl2012}.
Consequently, \eqref{eq:coeff-ODE} has a unique solution $\bz\in AC([0,\widetilde T];\RR^{N})$ for some $0<\widetilde T\le T$.
\end{proof}
\end{lemma}

We shall show later that $\widetilde T = T$.
We now define some quantities that will appear in the energy law satisfied by the discrete system:
\begin{subequations}
\begin{align}
\label{eq:energy} \sE[\bU,\bM,\Phi] & := \frac{1}{2} \left(\|\bU\|^2 + \frac{\mu_0}{\chi} \|\bM\|^2 + \mu_0 \|\nabla \Phi\|^2\right),\\
\label{eq:dissipation} \sD[\bU,\bM,\Phi] & := \frac{1}{2}\left( \nu\|\nabla \bU\|^2
+ \frac{\mu_0}{\tau} \int_\Omega Q(\bM, \nabla \Phi)
+ \kappa \mu_0 \|\Delta\Phi + \div\bM\|^2
\right),
\end{align}
\end{subequations}
where $Q$ is the quadratic form defined as
\begin{equation}\label{eq:Q-def}
Q(\ba, \bb)
:= \frac{1}{\chi} |\ba|^2 - \ba \cdot \bb + (1+\chi) |\bb|^2
= \frac{1}{\chi} |\ba - \chi \bb|^2 + (\ba + \bb) \cdot \bb
= \frac{1}{\chi}\left|\ba-\frac{\chi}{2}\bb\right|^2 + \left(1+\frac{3\chi}{4}\right)|\bb|^2,
\qquad \forall \ba, \bb \in \RR^d.
\end{equation}
For every $\chi>0$, this shows that $Q$ is positive definite, that is, there exists $C_{\text{pos}} := C_{\text{pos}}(\chi) > 0$ such that
\begin{equation*}
Q(\ba, \bb) \geq C_{\text{pos}} \left( |\ba|^2 + |\bb|^2  \right).
\end{equation*}
This, in particular, implies that
\begin{equation}\label{eq:simpler-dissipation}
\sD^*[\bU,\bM,\Phi] := \frac{1}{2}\left( \nu\|\nabla \bU\|^2 + \frac{\mu_0 C_{\text{pos}}}{\tau} \|\bM\|^2 + \frac{\mu_0 C_{\text{pos}}}{\tau} \|\nabla\Phi\|^2 + \kappa \mu_0 \|\Delta\Phi + \div\bM\|^2 \right) \leq \sD[\bU,\bM,\Phi].
\end{equation}
Notice that we do not include the nonlinear spin dissipation in the definitions of the quadratic functionals $\sD$ and $\sD^*$.

\begin{lemma}[discrete energy law]\label{lem:disc-energy-law}
Under the hypotheses of \cref{lem:existence-discrete-solutions}, let $[0,\widetilde T]$, with $0<\widetilde T\le T$, be any interval of existence of an absolutely continuous solution $(\bU,\bM,\Phi)$ of the reduced scheme.
Then, for almost every $t\in(0,\widetilde T)$,
\begin{equation}\label{eq:disc-energy-law}
\frac{\dd}{\dd t} \sE[\bU,\bM,\Phi] + 2 \sD[\bU,\bM,\Phi]
+\mu_0\beta\|\bM\times\nabla\Phi\|^2
= \frac{\mu_0}{\tau} (\bh_a, \nabla \Phi) + \mu_0 (\partial_t \bh_a, \nabla \Phi).
\end{equation}
\begin{proof}
Absolute continuity of $\bU$, $\bM$, and $\nabla\Phi$ in the fixed finite-dimensional spatial spaces justifies the chain rule
\begin{equation*}
\frac{1}{2}\frac{\dd}{\dd t}\|\bv(t)\|^2
=(\partial_t\bv(t),\bv(t)),
\qquad \bv\in\{\bU,\bM,\nabla\Phi\},
\quad\text{for a.e. }t\in(0,\widetilde T).
\end{equation*}
Finite dimensionality also allows the equations to be tested with the solution at almost every time.
Choosing $\bV_\bu = \bU$, $\bV_\bm = \frac{\mu_0}{\chi}\bM$ and $V_\phi = \mu_0 \Phi$ in the reduced form of \eqref{eq:semidiscrete-scheme}, and using the skew-symmetry property \eqref{eq:skew-symmetry} of the trilinear form $\cB$  with $\div\bU = 0$, basic properties of the cross product, and the definition \eqref{eq:Q-def} of $Q$, we arrive at
\begin{equation*}
\frac{1}{2}\frac{\dd}{\dd t}\|\bU\|^2+\nu\|\nabla\bU\|^2
=\mu_0\cB(\bM,\nabla\Phi,\bU)+\frac{\mu_0}{2}(\bM\times\nabla\Phi,\bcurl\bU),
\end{equation*}
\begin{equation*}
\frac{\mu_0}{2\chi}\frac{\dd}{\dd t}\|\bM\|^2
+\kappa\mu_0(\Delta\Phi+\div\bM,\div\bM)
+\frac{\mu_0}{\chi\tau}(\bM-\chi\nabla\Phi,\bM)=0,
\end{equation*}
\begin{multline*}
\frac{\mu_0}{2}\frac{\dd}{\dd t}\|\nabla\Phi\|^2
+\mu_0\frac{1+\chi}{\tau}\|\nabla\Phi\|^2
+\kappa\mu_0(\Delta\Phi+\div\bM,\Delta\Phi)
+\mu_0\beta\|\bM\times\nabla\Phi\|^2\\
=\frac{\mu_0}{\tau}(\bh_a,\nabla\Phi)+\mu_0(\partial_t\bh_a,\nabla\Phi)
+\mu_0\cB(\bU,\bM,\nabla\Phi)-\frac{\mu_0}{2}(\bcurl\bU,\bM\times\nabla\Phi).
\end{multline*}
The nonlinear relaxation vanishes in the magnetization identity because $\bM\times\bM=0$ and contributes $\mu_0\beta\|\bM\times\nabla\Phi\|^2$ in the potential identity, on account of \eqref{eq:spin-identities}.
Notice that $\cB(\bM,\nabla\Phi,\bU) = \cB(\bU,\nabla\Phi,\bM)$ and $\cB(\bU,\bM,\nabla\Phi) = -\cB(\bU,\nabla\Phi,\bM)$ according to \eqref{eq:symmetry} and \eqref{eq:skew-symmetry} because $\nabla^2\Phi$ is symmetric and $\div \bU = 0$.
Then,
by summing these last three identities and performing simple algebraic manipulations we arrive at \eqref{eq:disc-energy-law}.
We omit further details.
\end{proof}
\end{lemma}
\begin{remark}[consistency with the formal energy law]
The discrete energy law \eqref{eq:disc-energy-law} is consistent with the formal energy law \eqref{eq:continuous-energy law}.
In fact, a sufficiently smooth solution $(\bu,p,\bm,\phi)$ of \eqref{ferrofluids-system} satisfies $\Delta\phi+\div\bm=0$ and $(\bm+\nabla\phi,\nabla\phi)=(\bh_a,\nabla\phi)$, whence
\begin{equation*}
\frac{\mu_0}{\tau} \int_\Omega Q(\bm, \nabla \phi) - \frac{\mu_0}{\tau} \int_\Omega \bh_a \cdot \nabla\phi
= \frac{\mu_0}{\chi\tau}\|\bm-\chi\nabla\phi\|^2.
\end{equation*}
\end{remark}
\begin{corollary}[discrete energy stability]\label{cor:disc-energy-stability}
Under the hypotheses of \cref{lem:disc-energy-law}, the solution $(\bU,\bM,\Phi)$ satisfies, for every $t\in[0,\widetilde T]$,
\begin{equation}\label{eq:disc-energy-stability}
\sE[\bU,\bM,\Phi](t) + \int_0^t \sD^*[\bU,\bM,\Phi] \dd s
+\mu_0\beta\int_0^t\|\bM\times\nabla\Phi\|^2\dd s
\leq \sE[\bU_0,\bM_0,\Phi_0]
+ \frac{\tau \mu_0}{2 C_{\text{pos}}} \int_0^t \left\|\frac{1}{\tau} \bh_a +  \partial_t \bh_a\right\|^2 \dd s.
\end{equation}
\begin{proof}
We estimate the right-hand side of the discrete energy law \eqref{eq:disc-energy-law} via Young's inequality:
\begin{equation*}
\left(\frac{\mu_0}{\tau} \bh_a + \mu_0 \partial_t \bh_a, \nabla \Phi\right)
\leq \frac{\tau}{2\mu_0 C_{\text{pos}}} \left\|\frac{\mu_0}{\tau} \bh_a + \mu_0 \partial_t \bh_a\right\|^2 + \frac{\mu_0 C_{\text{pos}}}{2\tau}\|\nabla\Phi\|^2,
\end{equation*}
whose second term can be absorbed into the left-hand side of \eqref{eq:disc-energy-law} on account of \eqref{eq:simpler-dissipation}, thus yielding
\begin{equation*}
\frac{\dd}{\dd t} \sE[\bU,\bM,\Phi] + \sD^*[\bU,\bM,\Phi]
+\mu_0\beta\|\bM\times\nabla\Phi\|^2
\leq \frac{\tau}{2\mu_0 C_{\text{pos}}} \left\|\frac{\mu_0}{\tau} \bh_a + \mu_0 \partial_t \bh_a\right\|^2.
\end{equation*}
The function $t\mapsto\sE[\bU,\bM,\Phi](t)$ is absolutely continuous, so integration over $(0,t)$ gives \eqref{eq:disc-energy-stability} for every $t\in[0,\widetilde T]$.
\end{proof}
\end{corollary}

\begin{remark}[pressure recovery]\label{rem:pressure-recovery}
Let $\bbP'$ be the dual of $\bbP$ equipped with the $\LL^2$-norm, and define the discrete divergence operator $B:\bbU\to\bbP'$ by
\begin{equation*}
\langle B\bV,Q\rangle_{\bbP'\times\bbP}:=(Q,\div\bV),
\qquad \bV\in\bbU,\quad Q\in\bbP.
\end{equation*}
Its transpose $B^{\mathrm t}:\bbP\to\bbU'$ is characterized by $\langle B^{\mathrm t}Q,\bV\rangle=(Q,\div\bV)$.
The pressure is determined by the momentum equation modulo
\begin{equation}\label{eq:discrete-pressure-nullspace}
\bbN:=\ker B^{\mathrm t}
=\{Q\in\bbP:(Q,\div\bV)=0\text{ for every }\bV\in\bbU\}.
\end{equation}
Since $\bbP$ is finite dimensional, the uniqueness of the recovered pressure is equivalent to $\bbN=\{0\}$, or equivalently to surjectivity of $B$.
\end{remark}

\begin{proposition}[existence and uniqueness]\label{prop:semidiscrete-solvability}
Suppose that the spaces in \eqref{eq:fin-dim-subspaces} satisfy \cref{assump:strongly-div-free} and that $(\bU_0,\bM_0,\Phi_0)\in\bbUdiv\times\bbM\times\bbX$.
For every $\bh_a\in\HH^1(0,T;\bL^2(\Omega))$, the scheme in \eqref{all-semidiscrete-scheme} has a unique solution
\begin{equation*}
(\bU,\bM,\Phi)\in\HH^1(0,T;\bbUdiv\times\bbM\times\bbX),
\end{equation*}
and admits a pressure $P\in\LL^2(0,T;\bbP)$ that is unique modulo $\LL^2(0,T;\bbN)$ (cf.~\eqref{eq:discrete-pressure-nullspace}).
The solution satisfies the energy bound in \eqref{eq:disc-energy-stability}.
\begin{proof}
We continue from the proof of \cref{lem:existence-discrete-solutions}.
Let $T_{\max}\in(0,T]$ be the supremum of the times to which $\bz$ can be continued within the prescribed interval $[0,T]$.
Applying \eqref{eq:disc-energy-stability} on $[0,\widetilde T]$ for arbitrary $0<\widetilde T<T_{\max}$, and using positive definiteness of the energy and equivalence of norms in the fixed finite-dimensional space, yields
\begin{equation}\label{eq:coeff-energy-bound}
\sup_{0\le t<T_{\max}}|\bz(t)|\le R_h<\infty,
\end{equation}
where $R_h$ may depend on the mesh, the initial data, and the forcing norm over $(0,T)$, but not on $\widetilde T$.
The continuation criterion for Carath\'eodory equations \cite[Theorem~10]{Prazak2024} therefore applies.
To make this step explicit, set $M_{R_h}:=\max_{|\by|\le R_h}|\bf(\by)|$ and integrate \eqref{eq:coeff-ODE} to obtain, for $0\le s<t<T_{\max}$,
\begin{equation}\label{eq:coeff-time-continuity}
|\bz(t)-\bz(s)|
\le M_{R_h}(t-s)+\int_s^t|\bg(r)|\,\dd r
\le M_{R_h}(t-s)+\|\bg\|_{\LL^2(0,T;\RR^{N})}(t-s)^{1/2}.
\end{equation}
Thus $\bz(t)$ has a finite limit as $t\uparrow T_{\max}$.
If $T_{\max}<T$, local existence starting from this limit extends $\bz$ beyond $T_{\max}$, a contradiction.
Hence $T_{\max}=T$, and \eqref{eq:coeff-time-continuity} defines the continuous extension to $t=T$.

By \eqref{eq:coeff-energy-bound} and recalling that $T_{\max} = T$, both $\bz$ and $\bf(\bz)$ belong to $\LL^\infty(0,T;\RR^{N})$.
Equation \eqref{eq:coeff-ODE} therefore yields
\begin{equation}\label{eq:coeff-time-derivative-bound}
\|\dot{\bz}\|_{\LL^2(0,T;\RR^{N})}
\le T^{1/2}M_{R_h}+\|\bg\|_{\LL^2(0,T;\RR^{N})}<\infty.
\end{equation}
The integral form of \eqref{eq:coeff-ODE} then gives $\bz\in AC([0,T];\RR^{N})$, with its almost-everywhere derivative also its weak derivative, so $\bz\in\HH^1(0,T;\RR^{N})$.
Equivalently, by expanding on the aforementioned finite-dimensional bases, $(\bU,\bM,\Phi)\in\HH^1(0,T;\bbUdiv\times\bbM\times\bbX)$.

For almost every $t$, let $\ell(t)\in\bbU'$ be the momentum residual obtained by omitting the pressure term in \eqref{eq:semidiscrete-scheme}.
The term containing $\partial_t\bU$ is square integrable in time, and the remaining coefficients are bounded by \eqref{eq:coeff-energy-bound}, so $\ell\in\LL^2(0,T;\bbU')$.
The reduced momentum equation says that $\ell(t)$ vanishes on $\bbUdiv=\ker B$.
Finite-dimensional linear algebra identifies the range of $B^{\mathrm t}$ with the space of annihilator functionals of $\ker B$, whence there is $P(t)\in\bbP$ with $B^{\mathrm t}P(t)=\ell(t)$.
Choosing $P(t)$ in the $\LL^2$-orthogonal complement of $\bbN$ gives a fixed linear recovery map and hence $P\in\LL^2(0,T;\bbP)$.
Any other pressure differs by an element of $\LL^2(0,T;\bbN)$.
Conversely, every solution of the full scheme restricts to the same reduced system, which proves uniqueness of $(\bU,\bM,\Phi)$.
\end{proof}
\end{proposition}
\section{A priori error analysis}\label{sec:a-priori}
In this section, we carry out an a priori error analysis for our abstract discrete scheme \eqref{all-semidiscrete-scheme} under precise but general assumptions.
Let $(\bu,p,\bm,\phi)$ be a smooth weak solution corresponding to a smooth applied magnetic field $\bh_a$ and initial data $\bu_0$, $\bm_0$ and $\phi_0$, with $\phi_0$ specified by the compatibility condition \eqref{eq:initial-Maxwell-compatibility}.

As is customary, we introduce suitable projections $\osfU := (\obU,\obM,\oPhi)$ of $\sfu := (\bu,\bm,\phi)$  with values in $\bbU \times \bbM \times \bbX$, and derive the nonlinear equations satisfied by the discrete error $\sfE := (\bU-\obU, \bM-\obM, \Phi-\oPhi)$.
These equations are similar to \eqref{eq:semidiscrete-scheme} except that their right-hand sides contain consistency terms and byproducts of the underlying nonlinear interactions.
We next apply \cref{cor:disc-energy-stability} (\nameref*{cor:disc-energy-stability}) to $\sfE$ with the corresponding right-hand side.

We start by making abstract assumptions on the projections.
We will make them concrete, along with specific choices of spaces $(\bbU,\bbP,\bbM,\bbX)$ in \cref{sec:specific-FES}.

\begin{assump}\label{assump:proj-time-commute}
Let $\cP_h : \bH^1_0(\Omega) \times \bH^1(\Omega) \times \HH^2_\#(\Omega) \rightarrow \bbU \times \bbM \times \bbX$ denote the \emph{global auxiliary projection}.
We require that $\cP_h$ is linear, bounded and independent of time.
Writing $\osfU(t)=\mathcal P_h\sfu(t)$ and provided $\sfu$ is sufficiently regular in time (to be specified later), differentiation in time gives
\begin{equation}\label{eq:projection-time-commutation}
\partial_t\osfU=\mathcal P_h(\partial_t\sfu).
\end{equation}
\end{assump}
\begin{assump}[strongly divergence-free projection]\label{assump:vel-aux-div-free}
We assume that for each $t \in (0,T]$,  $\obU(t) \in \bbUdiv$.
\end{assump}

We respectively define the Galerkin, projection, and discrete errors, as follows:
\begin{align*}
\sfe & := (\be_\bu, \be_\bm, e_\phi) := (\bu-\bU, \bm - \bM, \phi - \Phi),\\
\sfe^p & := (\be_\bu^p, \be_\bm^p, e_\phi^p) := (\bu-\obU, \bm - \obM, \phi - \oPhi),\\
\sfE & := (\bE_\bu, \bE_\bm, E_\phi) := (\bU-\obU, \bM - \obM, \Phi - \oPhi).
\end{align*}
Notice that the time-dependent discrete error $\sfE$ takes values in $\bbU_{\div} \times \bbM \times \bbX$.
Moreover, the three error quantities above are related via the identity
\begin{equation*}
\sfe = \sfe^p - \sfE.
\end{equation*}
We now introduce some nonlinear operators associated to the ferrofluid system and its discretization \eqref{all-semidiscrete-scheme} that will be useful in the subsequent analysis.
For a sufficiently regular triple $\sfv := (\bsu,\bsm,\upphi)$, and discrete test functions $(\bV_\bu, \bV_\bm, V_\phi) \in \bbU_{\div} \times \bbM \times \bbX$, we define
\begin{subequations}
\begin{equation}\label{def:op-U}
\langle \sM_\bu \sfv, \bV_\bu \rangle:=
(\partial_t \bsu,\bV_\bu) + \nu(\nabla \bsu, \nabla \bV_\bu) + \cB(\bsu,\bsu,\bV_\bu) - \mu_0 \cB(\bsm,\nabla\upphi,\bV_\bu) - \frac{\mu_0}{2} (\bsm \times \nabla\upphi, \bcurl \bV_\bu),
\end{equation}
\begin{equation}\label{def:op-m}
\langle \sM_\bm \sfv, \bV_\bm \rangle :=
(\partial_t \bsm, \bV_\bm) + \cB(\bsu, \bsm, \bV_\bm) - \frac{1}{2} (\bcurl \bsu, \bsm \times \bV_\bm) + \frac{1}{\tau} (\bsm - \chi \nabla\upphi, \bV_\bm)
 - \beta(\bsm \times \nabla\upphi,\bsm\times\bV_\bm),
\end{equation}
\begin{equation}\label{def:op-phi}
\langle \sM_\phi \sfv, V_\phi \rangle :=
(\nabla \partial_t \upphi, \nabla V_\phi) + \frac{1+\chi}{\tau}(\nabla \upphi, \nabla V_\phi) - \cB(\bsu,\bsm, \nabla V_\phi) + \frac{1}{2}(\bcurl \bsu, \bsm \times \nabla V_\phi)
 + \beta(\bsm\times\nabla\upphi,\bsm\times\nabla V_\phi),
\end{equation}
\begin{equation}\label{def:op-forcing-phi}
\langle F_\phi, V_\phi \rangle := \frac{1}{\tau} (\bh_a, \nabla V_\phi) + (\partial_t \bh_a, \nabla V_\phi).
\end{equation}
\end{subequations}
Since the operators $\sM_\bu$, $\sM_\bm$ and $\sM_\phi$ are those appearing in the ferrofluids system \eqref{eq:semidiscrete-scheme}, a sufficiently smooth weak solution $\sfu$ of the equivalent formulation introduced in \cref{sec:equivalent-formulation} satisfies
\begin{equation}\label{eq:nonlinops-ex-sol}
\begin{aligned}
\langle \sM_\bu \sfu, \bV_\bu \rangle & = 0,\\
\langle \sM_\bm \sfu, \bV_\bm \rangle & = 0,\\
\langle \sM_\phi \sfu, V_\phi \rangle & = \langle F_\phi, V_\phi \rangle,\\
\Delta\phi + \div\bm & = 0.
\end{aligned}
\end{equation}
In contrast, the Galerkin solution $\sfU := (\bU,\bM,\Phi)$ satisfies
\begin{equation}\label{eq:nonlinops-disc-sol}
\begin{aligned}
\langle \sM_\bu \sfU, \bV_\bu \rangle & = 0,\\
\langle \sM_\bm \sfU, \bV_\bm \rangle +\kappa \chi\, (\Delta\Phi + \div\bM, \div \bV_\bm) & = 0,\\
\langle \sM_\phi \sfU, V_\phi \rangle +\kappa (\Delta\Phi + \div\bM, \Delta V_\phi)& = \langle F_\phi, V_\phi \rangle.
\end{aligned}
\end{equation}

\subsection{Discrete error equation}\label{sec:discrete-error-equation}

The following quantity will prove useful when handling the cubic nonlinear magnetic relaxation.
For $\ba,\bb\in\RR^d$, define
\begin{equation}\label{eq:cubic-nonlinearity}
\bG(\ba,\bb):=(\ba\times\bb)\times\ba
\end{equation}
and
\begin{equation}\label{eq:spin-mixed-remainder}
\bK_{\bM,\Phi}:=\bG(\bM,\nabla\Phi)-\bG(\obM,\nabla\oPhi)-\bG(\bE_\bm,\nabla E_\phi).
\end{equation}
\begin{lemma}[error equation]
The discrete error $\sfE := (\bE_\bu, \bE_\bm, E_\phi) \in \bbU_{\div} \times \bbM \times \bbX$ satisfies the following semidiscrete variational problem
\begin{equation}\label{eq:error-equation}
\begin{aligned}
\langle \sM_\bu \sfE, \bV_\bu \rangle & = \langle \bR_\bu, \bV_\bu \rangle,\\
\langle \sM_\bm \sfE, \bV_\bm \rangle + \kappa \chi\, (\Delta E_\phi + \div\bE_\bm, \div \bV_\bm) & = \langle \bR_\bm, \bV_\bm \rangle,\\
\langle \sM_\phi \sfE, V_\phi \rangle +\kappa (\Delta E_\phi + \div\bE_\bm, \Delta V_\phi) & = \langle R_\phi, V_\phi \rangle,
\end{aligned}
\end{equation}
for all discrete test functions $(\bV_\bu, \bV_\bm, V_\phi) \in \bbU_{\div} \times \bbM \times \bbX$.
The residual operators $\bR_\bu$, $\bR_\bm$ and $R_\phi$ are given by
\begin{subequations}\label{all-residuals}
\begin{align}
\label{eq:residual-u}
\langle \bR_\bu, \bV_\bu \rangle & := \langle \bN_\bU, \bV_\bu \rangle - \langle \sM_\bu \osfU, \bV_\bu \rangle,\\
\label{eq:residual-m}
\langle \bR_\bm, \bV_\bm \rangle & := \langle \bN_\bM, \bV_\bm \rangle - \langle \sM_\bm \osfU, \bV_\bm \rangle - \kappa\chi\, (\Delta\oPhi + \div\obM, \div \bV_\bm), \\
\label{eq:residual-phi}
\langle R_\phi, V_\phi \rangle & := \langle N_\Phi, V_\phi \rangle - \langle \sM_\phi \osfU - F_\phi, V_\phi \rangle -\kappa(\Delta\oPhi + \div\obM, \Delta V_\phi),
\end{align}
\end{subequations}
where, in turn, $\bN_\bU$, $\bN_\bM$ and $N_\Phi$ account for the nonlinear nature of the system, and are given by
\begin{subequations}\label{all-nonlinearities}
\begin{multline}
\label{eq:nonlinearity-u} \langle \bN_\bU, \bV_\bu \rangle := - \cB(\bE_\bu, \obU, \bV_\bu) - \cB(\obU, \bE_\bu, \bV_\bu) + \mu_0 \cB(\bE_\bm, \nabla\oPhi, \bV_\bu) + \mu_0 \cB(\obM, \nabla E_\phi, \bV_\bu)\\ \qquad + \frac{\mu_0}{2}(\bE_\bm \times \nabla\oPhi, \bcurl \bV_\bu) + \frac{\mu_0}{2}(\obM \times \nabla E_\phi, \bcurl \bV_\bu),
\end{multline}
\begin{equation}\label{eq:nonlinearity-m}
\begin{aligned}
\langle \bN_\bM, \bV_\bm \rangle
&:= - \cB(\bE_\bu, \obM, \bV_\bm) - \cB(\obU, \bE_\bm, \bV_\bm)\\
&\quad + \frac{1}{2}(\bcurl \bE_\bu, \obM \times \bV_\bm)
+ \frac{1}{2}(\bcurl \obU, \bE_\bm \times \bV_\bm)
+ \beta(\bK_{\bM,\Phi},\bV_\bm),
\end{aligned}
\end{equation}
\begin{equation}\label{eq:nonlinearity-phi}
\begin{aligned}
\langle N_\Phi, V_\phi \rangle
&:= \cB(\bE_\bu,\obM,\nabla V_\phi) + \cB(\obU,\bE_\bm,\nabla V_\phi)\\
&\quad - \frac{1}{2}(\bcurl \bE_\bu, \obM \times \nabla V_\phi)
- \frac{1}{2}(\bcurl \obU, \bE_\bm \times \nabla V_\phi)
- \beta(\bK_{\bM,\Phi},\nabla V_\phi).
\end{aligned}
\end{equation}
\end{subequations}
\begin{proof}
To handle the quadratic nonlinearities in \eqref{eq:semidiscrete-scheme}, let us first consider two generic Galerkin quantities $X$ and $Y$ and projections $\overline{X}$ and $\overline{Y}$, and observe that
\begin{equation*}
(X-\overline{X})(Y-\overline{Y})
= XY - \overline{X}\,\overline{Y} - (X-\overline{X})\overline{Y} - \overline{X}(Y - \overline{Y}),
\end{equation*}
where the products are between scalar-, vector-, or tensor-valued fields depending on the context.
We next apply this identity to $\bE_\bu = \bU - \obU$, $\bE_\bm = \bM - \obM$ and $E_\phi = \Phi - \oPhi$ to obtain
\begin{gather*}
\cB(\bE_\bu,\bE_\bm,\bV_\bm)
= \cB(\bU,\bM,\bV_\bm) - \cB(\obU,\obM,\bV_\bm) - \cB(\bE_\bu,\obM,\bV_\bm) - \cB(\obU,\bE_\bm,\bV_\bm)\\
\bE_\bm\times\nabla E_\phi
= \bM\times\nabla\Phi - \obM\times\nabla\oPhi - \bE_\bm\times\nabla\oPhi - \obM\times\nabla E_\phi.
\end{gather*}
Similar expansions apply for the velocity convection, the Kelvin force  and the term involving $(\bcurl\bU)\times\bM$.
In contrast, the time derivatives, viscous term, and linear relaxation terms are linear in $\osfU$ and $\sfE$.

For the nonlinear relaxation, we have that for every $\bv\in\bL^2(\Omega)$, combining \eqref{eq:spin-identities} and \eqref{eq:spin-mixed-remainder} yields
\begin{equation}\label{eq:spin-error-decomposition}
(\bE_\bm\times\nabla E_\phi,\bE_\bm\times\bv) = (\bM\times\nabla\Phi,\bM\times\bv) - (\obM\times\nabla\oPhi,\obM\times\bv) - (\bK_{\bM,\Phi},\bv).
\end{equation}
With $\bv=\bV_\bm$ and $\bv=\nabla V_\phi$, identity \eqref{eq:spin-error-decomposition} contributes $+\beta(\bK_{\bM,\Phi},\bV_\bm)$ and $-\beta(\bK_{\bM,\Phi},\nabla V_\phi)$ to $\langle\bN_\bM,\bV_\bm\rangle$ and $\langle N_\Phi,V_\phi\rangle$, respectively.
The pure-error cubic contribution is already included in the definitions of $\sM_\bm\sfE$ and $\sM_\phi\sfE$.
Collecting all mixed terms according to \eqref{all-nonlinearities}, we therefore obtain
\begin{equation}\label{eq:operator-error-decomposition}
\begin{aligned}
\langle\sM_\bu\sfE,\bV_\bu\rangle & = \langle\sM_\bu\sfU,\bV_\bu\rangle - \langle\sM_\bu\osfU,\bV_\bu\rangle + \langle\bN_\bU,\bV_\bu\rangle,\\
\langle\sM_\bm\sfE,\bV_\bm\rangle & = \langle\sM_\bm\sfU,\bV_\bm\rangle - \langle\sM_\bm\osfU,\bV_\bm\rangle + \langle\bN_\bM,\bV_\bm\rangle,\\
\langle\sM_\phi\sfE,V_\phi\rangle & = \langle\sM_\phi\sfU,V_\phi\rangle - \langle\sM_\phi\osfU,V_\phi\rangle + \langle N_\Phi,V_\phi\rangle.
\end{aligned}
\end{equation}
Regarding the stabilization terms, since $\Delta \phi + \div \bm = 0$, we have that
\begin{equation}\label{eq:stabilization-error-decomposition}
(\Delta E_\phi + \div \bE_\bm) = \Delta\Phi+\div\bM + (\Delta e_\phi^p + \div \be_\bm^p).
\end{equation}
Substituting \eqref{eq:operator-error-decomposition} and \eqref{eq:stabilization-error-decomposition} into the semidiscrete scheme \eqref{eq:semidiscrete-scheme} yields the error equation \eqref{eq:error-equation}.
This finishes the proof.
\end{proof}
\end{lemma}

\begin{corollary}[energy law for the discrete error]
The discrete error $\sfE$ satisfies
\begin{equation}\label{eq:error-energy-law}
\frac{\dd}{\dd t} \sE[\bE_\bu,\bE_\bm,E_\phi] + 2\sD[\bE_\bu,\bE_\bm,E_\phi]
+\mu_0\beta\|\bE_\bm \times \nabla E_\phi\|^2
= 
\langle \bR_\bu, \bE_\bu \rangle + \frac{\mu_0}{\chi} \langle \bR_\bm, \bE_\bm \rangle + \mu_0 \langle R_\phi, E_\phi \rangle.
\end{equation}
\begin{proof}
Since \eqref{eq:error-equation} has the same structure as \eqref{eq:semidiscrete-scheme}, it suffices to choose $\bV_\bu = \bE_\bu$, $\bV_\bm = \frac{\mu_0}{\chi}\bE_\bm$ and $V_\phi = \mu_0 E_\phi$ in \eqref{eq:error-equation} and proceed as in the proof of \cref{lem:disc-energy-law}.
Further details are omitted.
\end{proof}
\end{corollary}
We let $\bbUdiv'$, $\widetilde\bbM'$ and $\widetilde\bbX'$ be the dual spaces of $\bbUdiv$, $\bbM$ and $\bbX$ induced by the norms $\bV_\bu \mapsto \|\nabla\bV_\bu\|$, $\bV_\bm \mapsto \|\bV_\bm\|$ and $V_\phi \mapsto \|\nabla V_\phi\|$, respectively.
Since $\bbM \subseteq \bH^1(\Omega)$ and $\bbX \subseteq \HH^2(\Omega) \cap \HH^1_\#(\Omega)$, we write $\widetilde\bbM'$ and $\widetilde\bbX'$ to distinguish them from the dual spaces of $\bbM$ and $\bbX$ with respect to the more natural $\bH^1$- and $\HH^2$-norms.

We now establish the discrete stability of the discrete error.
Before that, however, we need the following lemma that will be useful for handling the cubic nonlinearity $\bK_{\bM,\Phi}$.

\begin{lemma}[algebraic estimate for the spin remainder]\label{lem:spin-remainder}
The term $\bK_{\bM,\Phi}$ defined in \eqref{eq:spin-mixed-remainder} satisfies
\begin{multline}\label{eq:spin-remainder-bound}
\frac{1}{\chi}(\bK_{\bM,\Phi},\bE_\bm-\chi\nabla E_\phi)
-\|\bE_\bm\times\nabla E_\phi\|^2
+\frac{1}{2}\|\bM\times\nabla\Phi-\obM\times\nabla\oPhi\|^2\\
\leq\frac{1}{2\chi^2}\left\|\obM - \chi\nabla\oPhi\right\|_{0,\infty}^2\|\bE_\bm\|^2
+\frac{1}{2}\|\obM\times\nabla\oPhi\|_{0,\infty}
\left(\|\bE_\bm\|^2+\|\nabla E_\phi\|^2\right).
\end{multline}
\begin{proof}
Let $\bW:=\bE_\bm-\chi\nabla E_\phi$.
We have that $(\bG(\bE_\bm,\nabla E_\phi),\bW)=-\chi\|\bE_\bm\times\nabla E_\phi\|^2$ because of identity \eqref{eq:spin-identities}, and consequently, using the definition of \eqref{eq:spin-mixed-remainder} we have that
\begin{equation}\label{eq:spin-pure-error-cancellation}
\frac{1}{\chi}(\bK_{\bM,\Phi},\bW)
=\frac{1}{\chi}(\bG(\bM,\nabla\Phi)-\bG(\obM,\nabla\oPhi),\bW)
+\|\bE_\bm\times\nabla E_\phi\|^2.
\end{equation}
We now manipulate $\bG(\bM,\nabla\Phi)-\bG(\obM,\nabla\oPhi)$.
Recalling definition \eqref{eq:cubic-nonlinearity} of $\bG$, by adding and subtracting $(\obM\times\nabla\oPhi)\times\bM$ we have
\begin{equation}\label{eq:spin-cubic-difference}
\bG(\bM,\nabla\Phi)-\bG(\obM,\nabla\oPhi)
=(\bM\times\nabla\Phi-\obM\times\nabla\oPhi)\times\bM
+(\obM\times\nabla\oPhi)\times\bE_\bm,
\end{equation}
and so \eqref{eq:spin-pure-error-cancellation} reduces to
\begin{equation}\label{eq:spin-pure-error-cancellation-rewritten}
\frac{1}{\chi}(\bK_{\bM,\Phi},\bW)
=\frac{1}{\chi}(\bM\times\nabla\Phi-\obM\times\nabla\oPhi,\bM \times \bW)
+ \frac{1}{\chi} (\obM\times\nabla\oPhi, \bE_\bm \times \bW)
+\|\bE_\bm\times\nabla E_\phi\|^2,
\end{equation}
where we have used the triple product identity \eqref{eq:triple-product-identity}.

In turn, using $\bM=\obM+\bE_\bm$ and $\nabla\Phi=\nabla\oPhi+\nabla E_\phi$ yields
\begin{equation}\label{eq:spin-weighted-cross-product}
\bM\times\bW
=-\chi(\bM\times\nabla\Phi-\obM\times\nabla\oPhi)
- \bE_\bm\times\left(\obM - \chi \nabla\oPhi\right),
\end{equation}
and $\bE_\bm\times\bW=-\chi\bE_\bm\times\nabla E_\phi$.
Plugging these two identities back in \eqref{eq:spin-pure-error-cancellation-rewritten} and performing some algebraic simplifications, we thus get
\begin{multline}\label{eq:spin-signed-remainder}
\frac{1}{\chi}(\bK_{\bM,\Phi},\bW)
= \|\bE_\bm\times\nabla E_\phi\|^2 - \|\bM\times\nabla\Phi-\obM\times\nabla\oPhi\|^2\\
- \frac{1}{\chi}(\bM\times\nabla\Phi-\obM\times\nabla\oPhi,\bE_\bm\times\left(\obM - \chi \nabla\oPhi\right))
-(\obM\times\nabla\oPhi, \bE_\bm\times\nabla E_\phi)
.
\end{multline}
We now bound the two scalar products in \eqref{eq:spin-signed-remainder} using H\"older's and Young's inequalities.
For the first one, we obtain
\begin{multline}\label{eq:spin-mixed-product-bound}
\frac{1}{\chi}\left|(\bM\times\nabla\Phi-\obM\times\nabla\oPhi,\bE_\bm\times(\obM-\chi\nabla\oPhi))\right|
\leq\frac{1}{\chi}\|\bM\times\nabla\Phi-\obM\times\nabla\oPhi\|
\|\obM-\chi\nabla\oPhi\|_{0,\infty}\|\bE_\bm\|\\
\leq\frac{1}{2}\|\bM\times\nabla\Phi-\obM\times\nabla\oPhi\|^2
+\frac{1}{2\chi^2}\|\obM-\chi\nabla\oPhi\|_{0,\infty}^2\|\bE_\bm\|^2.
\end{multline}
For the second one, we have
\begin{equation}\label{eq:spin-projection-product-bound}
\left|(\obM\times\nabla\oPhi,\bE_\bm\times\nabla E_\phi)\right|
\leq\|\obM\times\nabla\oPhi\|_{0,\infty}\|\bE_\bm\|\|\nabla E_\phi\|
\leq\frac{1}{2}\|\obM\times\nabla\oPhi\|_{0,\infty}
\left(\|\bE_\bm\|^2+\|\nabla E_\phi\|^2\right).
\end{equation}
Substituting \eqref{eq:spin-mixed-product-bound} and \eqref{eq:spin-projection-product-bound} into \eqref{eq:spin-signed-remainder}, collecting the squared norms, and recalling that $\bW=\bE_\bm-\chi\nabla E_\phi$ yields \eqref{eq:spin-remainder-bound}.
\end{proof}
\end{lemma}

\begin{lemma}[energy stability of the discrete error]\label{lem:stability-discrete-error}
The discrete error $\sfE$ satisfies the following bound:
\begin{multline}\label{eq:stability-discrete-error}
\sE[\bE_\bu,\bE_\bm,E_\phi](t) + \int_0^t \sD^*[\bE_\bu,\bE_\bm,E_\phi] \dd s
+\frac{\mu_0\beta}{2}\int_0^t\|\bM\times\nabla\Phi-\obM\times\nabla\oPhi\|^2\dd s\\
\leq  \Big(\sE_0 + \sG(t) \Big) \exp\left(\|\eta\|_{\LL^1([0,t])} \right),
\end{multline}
where
\begin{align}
\sG(t) & := \int_0^t  \Big(\frac{1}{\nu} \|\sM_\bu \osfU\|_{\bbUdiv'}^2 + \frac{\mu_0 \tau}{2C_{\text{pos}}\chi^2} \|\sM_\bm \osfU\|_{\widetilde\bbM'}^2 + \frac{\mu_0 \tau}{2C_{\text{pos}}} \|\sM_\phi \osfU - F_\phi\|_{\widetilde\bbX'}^2 + \kappa \mu_0\|\Delta\oPhi + \div\obM\|^2\Big) \dd s, \label{eq:consistency-terms}\\
\sE_0 & := \sE[\bE_\bu,\bE_\bm,E_\phi](0), \label{eq:initial-error}
\end{align}
and $\eta := \eta(\|\obU\|_{1,\infty}, \|\obM\|_{1,\infty}, \|\oPhi\|_{2,\infty}, \kappa, \mu_0, \chi, \tau, \beta, \nu, C_{\text{pos}}) > 0$ is an integrable time-dependent function, increasing in its first three arguments.
\begin{proof}
According to the definition \eqref{all-residuals} of the residuals, the right-hand side of \eqref{eq:error-energy-law} reads
\begin{equation}\label{eq:proto-manipulating-nonlinearities}
\begin{aligned}
\langle \bR_\bu, \bE_\bu \rangle + \frac{\mu_0}{\chi} \langle \bR_\bm, \bE_\bm \rangle + \mu_0 \langle R_\phi, E_\phi \rangle
& = - \langle \sM_\bu \osfU, \bE_\bu \rangle - \frac{\mu_0}{\chi} \langle \sM_\bm \osfU, \bE_\bm \rangle - \mu_0 \langle \sM_\phi \osfU - F_\phi, E_\phi \rangle\\
& \quad + \langle \bN_\bU, \bE_\bu \rangle + \frac{\mu_0}{\chi} \langle \bN_\bM, \bE_\bm \rangle + \mu_0 \langle N_\Phi, E_\phi \rangle\\
& \quad - \kappa \mu_0\, (\Delta\oPhi + \div\obM, \div \bE_\bm + \Delta E_\phi).
\end{aligned}
\end{equation}
We now manipulate the terms involving $\bN_\bU$, $\bN_\bM$ and $N_\Phi$, by passing as many derivatives as possible to the auxiliary projection $\osfU$.
First, because $\div \obU = 0$ a.e.~on $\Omega$, \eqref{eq:nonlinearity-u} and \eqref{eq:skew-symmetry} imply that
\begin{multline}
\langle \bN_\bU, \bE_\bu \rangle
= - \cB(\bE_\bu, \obU, \bE_\bu) + \mu_0 \cB(\bE_\bm, \nabla\oPhi, \bE_\bu) + \mu_0 \cB(\obM, \nabla E_\phi, \bE_\bu)\\ \qquad + \frac{\mu_0}{2}(\bE_\bm \times \nabla\oPhi, \bcurl \bE_\bu) + \frac{\mu_0}{2}(\obM \times \nabla E_\phi, \bcurl \bE_\bu).
\end{multline}
In turn, recalling definition \eqref{eq:nonlinearity-m}, identity \eqref{eq:skew-symmetry} with $\div \obU = 0$ implies that
\begin{equation*}
\langle \bN_\bM, \bE_\bm \rangle = - \cB(\bE_\bu, \obM, \bE_\bm) + \frac{1}{2}(\bcurl \bE_\bu, \obM \times \bE_\bm) +\beta(\bK_{\bM,\Phi},\bE_\bm).
\end{equation*}
Finally, using \eqref{eq:skew-symmetry}, the symmetry of $\nabla^2 E_\phi$ and performing an integration by parts that accounts for the possibly nonzero $\div (\bE_\bm + \nabla E_\phi)$ term, we have that
\begin{multline}\label{eq:key-magnetic-cancellation}
\cB(\obU,\bE_\bm,\nabla E_\phi)
= \cB(\obU,\bE_\bm + \nabla E_\phi,\nabla E_\phi)
= -\cB(\obU,\nabla E_\phi,\bE_\bm + \nabla E_\phi)\\
= -\cB(\bE_\bm + \nabla E_\phi,\nabla E_\phi,\obU)
= \cB(\bE_\bm + \nabla E_\phi,\obU,\nabla E_\phi) + (\div(\bE_\bm+\nabla E_\phi), \obU \cdot \nabla E_\phi).
\end{multline}
%

It follows that
\begin{equation*}
\begin{aligned}
\langle N_\Phi, E_\phi \rangle
& = \cB(\bE_\bu,\obM,\nabla E_\phi) + \cB(\obU,\bE_\bm,\nabla E_\phi)\\
& \quad - \frac{1}{2}(\bcurl \bE_\bu, \obM \times \nabla E_\phi)
- \frac{1}{2}(\bcurl \obU, \bE_\bm \times \nabla E_\phi)
- \beta(\bK_{\bM,\Phi},\nabla E_\phi).
\end{aligned}
\end{equation*}
Taking into account these last identities, \eqref{eq:proto-manipulating-nonlinearities} can be rewritten as
\begin{equation}\label{eq:rewriting-residue}
\begin{aligned}
\langle \bR_\bu, & \bE_\bu \rangle +\frac{\mu_0}{\chi} \langle \bR_\bm, \bE_\bm \rangle + \mu_0 \langle R_\phi, E_\phi \rangle\\
& = - \cB(\bE_\bu, \obU, \bE_\bu) + \mu_0 \cB(\bE_\bm, \nabla\oPhi, \bE_\bu)\\
& \quad + \bcancel{\mu_0 \cB(\obM, \nabla E_\phi, \bE_\bu)} + \frac{\mu_0}{2}(\bE_\bm \times \nabla\oPhi, \bcurl \bE_\bu) + \cancel{\frac{\mu_0}{2}(\obM \times \nabla E_\phi, \bcurl \bE_\bu)}\\
& \quad - \frac{\mu_0}{\chi} \cB(\bE_\bu, \obM, \bE_\bm) + \frac{\mu_0}{2\chi}(\bcurl \bE_\bu, \obM \times \bE_\bm) + \bcancel{\mu_0\cB(\bE_\bu,\obM,\nabla E_\phi)}\\
& \quad + \mu_0 \cB(\bE_\bm + \nabla E_\phi,\obU,\nabla E_\phi) + \mu_0(\div \bE_\bm + \Delta E_\phi, \obU \cdot \nabla E_\phi)\\ & \quad \cancel{- \frac{\mu_0}{2}(\bcurl \bE_\bu, \obM \times \nabla E_\phi)} - \frac{\mu_0}{2}(\bcurl \obU, \bE_\bm \times \nabla E_\phi)\\
& \quad + \frac{\mu_0 \beta}{\chi}(\bK_{\bM,\Phi},\bE_\bm - \chi \nabla E_\phi)\\
& \quad - \langle \sM_\bu \osfU, \bE_\bu \rangle - \frac{\mu_0}{\chi} \langle \sM_\bm \osfU, \bE_\bm \rangle - \mu_0 \langle \sM_\phi \osfU - F_\phi, E_\phi \rangle\\
& \quad - \kappa \mu_0\, (\Delta\oPhi + \div\obM, \div \bE_\bm + \Delta E_\phi),
\end{aligned}
\end{equation}
where the nontrivial but crucial cancellation is due to the identity
\begin{equation*}
\cB(\obM, \nabla E_\phi, \bE_\bu) = \cB(\bE_\bu, \nabla E_\phi, \obM) = -\cB(\bE_\bu, \obM, \nabla E_\phi),
\end{equation*}
which relies on \eqref{eq:symmetry} (symmetry of $\nabla^2 E_\phi$) and \eqref{eq:skew-symmetry} ($\div \bE_\bu = 0$).

Substituting identity \eqref{eq:rewriting-residue} in \eqref{eq:error-energy-law}, and realizing that no derivatives act on $\bE_\bm$ and $\nabla E_\phi$, except for the stabilization, we now use H\"older's inequalities to estimate each of the terms on the right-hand side of \eqref{eq:rewriting-residue}.
We bound the terms involving the trilinear form $\cB$ as follows: $|\cB(\ba,\bb,\bc)| \leq \|\ba\| \|\nabla\bb\|_{0,\infty}\|\bc\|$.
In turn, the term involving $\Delta E_\phi$ is bounded as follows
\begin{equation*}
\mu_0|(\div \bE_\bm + \Delta E_\phi, \obU \cdot \nabla E_\phi)|
\leq \frac{\mu_0\kappa}{4}\|\div \bE_\bm + \Delta E_\phi\|^2 + \frac{\mu_0}{\kappa} \|\obU\|_{0,\infty}^2 \|\nabla E_\phi\|^2.
\end{equation*}
Consistency terms are controlled via Young's inequality:
\begin{align*}
|\langle \sM_\bu \osfU, \bE_\bu \rangle| & \leq \frac{1}{\nu} \|\sM_\bu \osfU\|_{\bbUdiv'}^2 + \frac{\nu}{4} \|\nabla \bE_\bu\|^2,\\
\frac{\mu_0}{\chi} |\langle \sM_\bm \osfU, \bE_\bm \rangle| & \leq \frac{\mu_0 \tau}{2C_{\text{pos}}\chi^2} \|\sM_\bm \osfU\|_{\widetilde\bbM'}^2 + \frac{\mu_0 C_{\text{pos}}}{2\tau} \|\bE_\bm\|^2,\\
\mu_0 |\langle \sM_\phi \osfU - F_\phi, E_\phi \rangle| & \leq \frac{\mu_0 \tau}{2C_{\text{pos}}} \|\sM_\phi \osfU - F_\phi\|_{\tilde\bbX'}^2 + \frac{\mu_0 C_{\text{pos}}}{2\tau} \|\nabla E_\phi\|^2,\\
\kappa \mu_0 |(\Delta\oPhi + \div\obM, \div \bE_\bm + \Delta E_\phi)| & \leq \kappa \mu_0\|\Delta\oPhi + \div\obM\|^2 + \frac{\kappa \mu_0}{4}\|\div \bE_\bm + \Delta E_\phi\|^2.
\end{align*}
For the two rotational terms involving $\bcurl\bE_\bu$, use $\|\bcurl\bE_\bu\|=\|\nabla\bE_\bu\|$, since $\bE_\bu\in\bH^1_0(\Omega)$ satisfies $\div \bE_\bu = 0$.
Their combined contribution satisfies
\begin{multline*}
\frac{\mu_0}{2}\left|(\bE_\bm\times\nabla\oPhi,\bcurl\bE_\bu)\right|
+\frac{\mu_0}{2\chi}\left|(\bcurl\bE_\bu,\obM\times\bE_\bm)\right|\\
\leq\frac{\mu_0}{2}
\left(\|\nabla\oPhi\|_{0,\infty}+\chi^{-1}\|\obM\|_{0,\infty}\right)
\|\bE_\bm\|\,\|\nabla\bE_\bu\|\\
\leq\frac{\nu}{4}\|\nabla\bE_\bu\|^2
+\frac{\mu_0^2}{4\nu}
\left(\|\nabla\oPhi\|_{0,\infty}+\chi^{-1}\|\obM\|_{0,\infty}\right)^2
\|\bE_\bm\|^2.
\end{multline*}
The velocity consistency term consumes $\nu\|\nabla\bE_\bu\|^2/4$, and the two rotational terms together consume another $\nu\|\nabla\bE_\bu\|^2/4$.
This leaves the coefficient $\nu/2$ required in $\sD^*$.
The remaining rotational coefficient is included in $\eta$.

To estimate the term involving $\bK_{\bM,\Phi}$ in \eqref{eq:rewriting-residue}, multiply the bound \eqref{eq:spin-remainder-bound} from \cref{lem:spin-remainder} by $\mu_0\beta$.
The pure-error term $\mu_0\beta\|\bE_\bm\times\nabla E_\phi\|^2$ cancels with the spin dissipation on the left-hand side of \eqref{eq:error-energy-law}.
The coefficients on the right-hand side of \eqref{eq:spin-remainder-bound}, multiplied by $\mu_0\beta$, are bounded in terms of $\|\obM\|_{0,\infty}$, $\|\nabla\oPhi\|_{0,\infty}$, $\mu_0$, $\chi$, and $\beta$, so we include their contribution in $\eta$.

Altogether, recalling definition \eqref{eq:simpler-dissipation} of $\sD^*$, we arrive at
\begin{multline*}
\frac{\dd}{\dd t} \sE[\bE_\bu,\bE_\bm,E_\phi] + \sD^*[\bE_\bu,\bE_\bm,E_\phi] + \frac{\mu_0\beta}{2}\|\bM\times\nabla\Phi-\obM\times\nabla\oPhi\|^2\\
\leq \eta \Big( \|\bE_\bu\|^2 + \|\bE_\bm\|^2 + \|\nabla E_\phi\|^2  \Big)
+ \frac{1}{\nu} \|\sM_\bu \osfU\|_{\bbUdiv'}^2 + \frac{\mu_0 \tau}{2C_{\text{pos}}\chi^2} \|\sM_\bm \osfU\|_{\widetilde\bbM'}^2\\ + \frac{\mu_0 \tau}{2C_{\text{pos}}} \|\sM_\phi \osfU - F_\phi\|_{\tilde\bbX'}^2 + \kappa \mu_0\|\Delta\oPhi + \div\obM\|^2.
\end{multline*}
By integrating in time, it transpires that
\begin{multline}\label{eq:integrated-error-stability}
\sE[\bE_\bu,\bE_\bm,E_\phi](t) + \int_0^t \sD^*[\bE_\bu,\bE_\bm,E_\phi] \dd s
+\frac{\mu_0\beta}{2}\int_0^t\|\bM\times\nabla\Phi-\obM\times\nabla\oPhi\|^2\dd s\\
\leq \sE_0 + \int_0^t \eta(s) \Big( \|\bE_\bu\|^2 + \|\bE_\bm\|^2 + \|\nabla E_\phi\|^2  \Big) \dd s + \sG(t).
\end{multline}
The equivalence between the sum of the squared norms and the energy functional $\sE$ bounds the time integral on the right-hand side of \eqref{eq:integrated-error-stability} by $\tilde C\int_0^t\eta(s)\sE[\bE_\bu,\bE_\bm,E_\phi](s)\dd s$, where $\tilde C=\tilde C(\mu_0,\chi)>0$.
The entire left-hand side of \eqref{eq:integrated-error-stability} dominates $\sE[\bE_\bu,\bE_\bm,E_\phi](t)$ because both dissipation terms are nonnegative.
As $\sG$ is nondecreasing, Gr\"onwall's inequality applied to that entire left-hand side yields
\begin{multline*}
\sE[\bE_\bu,\bE_\bm,E_\phi](t) + \int_0^t \sD^*[\bE_\bu,\bE_\bm,E_\phi] \dd s
+\frac{\mu_0\beta}{2}\int_0^t\|\bM\times\nabla\Phi-\obM\times\nabla\oPhi\|^2\dd s\\
\leq \Big( \sE_0 + \sG(t) \Big) \exp\left(\tilde C \|\eta\|_{\LL^1([0,t])} \right).
\end{multline*}
Upon redefining $\eta$ as $\tilde C\eta$, we obtain \eqref{eq:stability-discrete-error}.
\end{proof}
\end{lemma}

\begin{corollary}[discrete perturbation estimate]\label{cor:stability-discretization-error}
The Galerkin error $\sfe$ satisfies the following estimate:
\begin{multline}\label{eq:stability-discretization-error}
\sE[\be_\bu,\be_\bm,e_\phi](t) + \int_0^t \sD^*[\be_\bu,\be_\bm,e_\phi] \dd s +\frac{\mu_0\beta}{2}\int_0^t\|\bm\times\nabla\phi-\bM\times\nabla\Phi\|^2\dd s
\\
\leq 2 \sE[\be_\bu^p,\be_\bm^p,e_\phi^p](t)
+ 2 \int_0^t \sD^*[\be_\bu^p,\be_\bm^p,e_\phi^p] \dd s\\
+\mu_0\beta \int_0^t\|\bm\times\nabla\phi-\obM\times\nabla\oPhi\|^2\dd s
+ 2(\sE_0 + \sG(t))\exp\left(\|\eta\|_{\LL^1([0,t])} \right),
\end{multline}
\begin{proof}
It follows from \eqref{eq:stability-discrete-error} by exploiting the quadratic structure of $\sE$ and $\sD^*$, recalling that $\sfe = \sfe^p - \sfE$, and using the inequality $|\ba+\bb|^2 \leq 2|\ba|^2 + 2|\bb|^2$.
For the spin term, we apply the same inequality after adding and subtracting $\obM\times\nabla\oPhi$ in $\bm\times\nabla\phi-\bM\times\nabla\Phi$.
\end{proof}
\end{corollary}

\subsection{Bounding consistency errors}\label{sec:bounding-consistency}
By using that $(\bu,p,\bm,\phi)$ satisfies the momentum and magnetization equations, we now estimate the consistency error $\sG(t)$ given by \eqref{eq:consistency-terms} by appropriate Sobolev norms of the projection error $\sfe^p$ and its time derivative.




Because of the symmetry of $\nabla^2\oPhi$ and $\nabla^2 \phi$, and the skew-symmetry of the trilinear form $\cB$ (cf.~\eqref{eq:skew-symmetry}) and in view of \eqref{eq:nonlinops-ex-sol}, we have for $\sM_\bu \osfU$:
\begin{multline*}
\langle \sM_\bu \osfU, \bV_\bu \rangle
= (\partial_t(\obU-\bu), \bV_\bu) - \left[\cB(\obU,\bV_\bu,\obU) - \cB(\bu,\bV_\bu, \bu)\right] + \nu(\nabla(\obU-\bu), \nabla \bV_\bu)
\\ + \mu_0 \left[\cB(\bV_\bu,\obM,\nabla\oPhi) - \cB(\bV_\bu,\bm,\nabla\phi)\right] - \frac{\mu_0}{2} \left[ (\obM \times \nabla\oPhi - \bm \times \nabla\phi, \bcurl \bV_\bu) \right], \qquad \forall\, \bV_\bu \in \bbUdiv
\end{multline*}
%
whence,
\begin{equation}\label{eq:bound-consistency-u}
\begin{aligned}
\|\sM_\bu \osfU\|_{\bbUdiv'}
& \leq C_P\|\partial_t\be_\bu^p\| + \big(\|\bu\|_{0,\infty} + \|\obU\|_{0,\infty}\big)\|\be_\bu^p\| + \nu\|\nabla \be_\bu^p\|\\
& \qquad + \mu_0 C_P\|\nabla\be_\bm^p\|\|\nabla\oPhi\|_{0,\infty} + C_P\mu_0\|\nabla \bm\|_{0,\infty}\|\nabla e_\phi^p\| + \frac{\mu_0}{2}\|\be_\bm^p\|\|\nabla\oPhi\|_{0,\infty} + \frac{\mu_0}{2}\|\bm\|_{0,\infty}\|\nabla e_\phi^p\|\\
& \leq C_1 \Big( \|\partial_t\be_\bu^p\| + \|\be_\bu^p\|_1 + \|\be_\bm^p\|_1 + \|\nabla e_\phi^p\| \Big),
\end{aligned}
\end{equation}
where $C_1 > 0$ depends on physical parameters, the positive constant in \eqref{subeqs:w1inf-stab} and Sobolev norms of the continuous solution $(\bu,\bm,\phi)$.
Similarly, for $\sM_\bm \osfU$ we get
\begin{multline*}
\langle \sM_\bm \osfU, \bV_\bm \rangle
= (\partial_t (\obM-\bm), \bV_\bm) + \left[\cB(\obU,\obM,\bV_\bm) - \cB(\bu,\bm,\bV_\bm) \right]\\
 - \frac{1}{2} \left[ (\bcurl \obU, \obM \times \bV_\bm) - (\bcurl \bu, \bm \times \bV_\bm) \right]\\
+ \frac{1}{\tau} \left[(\obM-\bm - \chi \nabla(\oPhi-\phi), \bV_\bm)\right]
- \beta[(\bG(\obM,\nabla\oPhi) - \bG(\bm,\nabla\phi), \bV_\bm)] \qquad \forall\, \bV_\bm \in \bbM.
\end{multline*}
To handle the cubic nonlinearity, notice that
\begin{align*}
\|\bG(\obM,\nabla\oPhi) - \bG(\bm,\nabla\phi)\|
& \leq \|(\obM \times \nabla\oPhi - \bm \times \nabla\phi) \times \obM \| + \|(\bm \times \nabla\phi) \times (\bm- \obM)\|\\
& \leq  \left( \|\bm\|_{0,\infty} \|\nabla e_\phi^p\| + \|\nabla \oPhi\|_{0,\infty}\|\be_\bm^p\| \right) \|\obM\|_{0,\infty} + \|\bm \times \nabla \phi\|_{0,\infty} \|\be_\bm^p\|
\end{align*}
Consequently,
\begin{equation}\label{eq:bound-consistency-m}
\begin{aligned}
\|\sM_\bm \osfU\|_{\widetilde\bbM'}
& \leq \|\partial_t \be_\bm^p\| + \|\bu\|_{0,\infty}\|\nabla\be_\bm^p\| + \|\nabla\obM\|_{0,\infty}\|\be_\bu^p\|\\
& \qquad + \frac{1}{2}\|\bcurl\bu\|_{0,\infty}\|\be_\bm^p\| + \frac{1}{2}\|\obM\|_{0,\infty}\|\bcurl \be_\bu^p\| + \frac{1}{\tau}\|\be_\bm^p\| + \frac{\chi}{\tau}\|\nabla e_\phi^p\|\\ & \qquad  + \beta\|\bG(\obM,\nabla\oPhi) - \bG(\bm,\nabla\phi)\|\\
& \leq C_2 \Big( \|\be_\bu^p\|_1 + \|\partial_t \be_\bm^p\| + \|\be_\bm^p\|_1 + \|\nabla e_\phi^p\| \Big).
\end{aligned}
\end{equation}
In turn,
\begin{multline*}
\langle \sM_\phi \osfU - F_\phi, V_\phi \rangle
= (\nabla \partial_t(\oPhi-\phi),\nabla V_\phi) + \frac{1+\chi}{\tau}(\nabla (\oPhi-\phi), \nabla V_\phi)\\
- \left[ \cB(\obU,\obM,\nabla V_\phi) - \cB(\bu,\bm,\nabla V_\phi) \right] + \frac{1}{2}\left[ (\bcurl \obU, \obM \times \nabla V_\phi) - (\bcurl \bu, \bm \times \nabla V_\phi) \right]\\
+ \beta[(\bG(\obM,\nabla\oPhi) - \bG(\bm,\nabla\phi), \nabla V_\phi)]
, \qquad \forall\, V_\phi \in \bbX,
\end{multline*}
whence
\begin{equation}\label{eq:bound-consistency-phi}
\begin{aligned}
\|\sM_\phi \osfU - F_\phi\|_{\tilde\bbX'}
& \leq \|\nabla \partial_t e_\phi^p\| + \frac{1+\chi}{\tau}\|\nabla e_\phi^p\| + \|\bu\|_{0,\infty}\|\nabla \be_\bm^p\| + \|\nabla \obM\|_{0,\infty}\|\be_\bu^p\|\\
& \qquad + \frac{1}{2}\|\bcurl \bu\|_{0,\infty}\|\be_\bm^p\| + \frac{1}{2}\|\obM\|_{0,\infty}\|\bcurl \be_\bu^p\| + \beta\|\bG(\obM,\nabla\oPhi) - \bG(\bm,\nabla\phi)\|\\
& \leq C_3 \Big(\|\be_\bu^p\|_1 + \|\be_\bm^p\|_1 + \|\nabla \partial_t e_\phi^p\| + \|\nabla e_\phi^p\| \Big).
\end{aligned}
\end{equation}
Finally, using that $\Delta\phi + \div\bm = 0$ yields
\begin{equation*}
\kappa \mu_0\|\Delta\oPhi + \div\obM\|^2
= \kappa \mu_0\|\Delta(\oPhi-\phi) + \div(\obM-\bm)\|^2
\leq 2\kappa \mu_0\left( \|\Delta e_\phi^p\|^2 + d\|\nabla \be_\bm^p\|^2  \right).
\end{equation*}
Combining bounds \eqref{eq:bound-consistency-u}, \eqref{eq:bound-consistency-m}, and \eqref{eq:bound-consistency-phi}, we deduce that the consistency terms in $\sG$ satisfy
\begin{multline}\label{eq:bulk-consistency-rates}
\|\sM_\bu \osfU\|_{\bbUdiv'}^2 + \|\sM_\bm \osfU\|_{\widetilde\bbM'}^2 + \|\sM_\phi \osfU - F_\phi\|_{\tilde\bbX'}^2 + \|\Delta\oPhi + \div\obM\|^2\\
\leq C_{\text{bulk}} \left( \|\be_\bu^p\|_1^2 + \|\partial_t \be_\bu^p\|^2 + \|\be_\bm^p\|_1^2 + \|\partial_t \be_\bm^p\|^2 + \|e_\phi^p\|_2^2 + \|\partial_t e_\phi^p\|_1^2 \right),
\end{multline}
where $C_{\text{bulk}} = C_{\text{bulk}}(\|\obU\|_{1,\infty}, \|\obM\|_{1,\infty}, \|\oPhi\|_{2,\infty}, \|\bu\|_{1,\infty}, \|\bm\|_{1,\infty}, \beta\|\bm \times \nabla \phi\|_{0,\infty},\kappa, \mu_0, \chi, \tau, \beta, \nu, C_{\text{pos}}) > 0$ is increasing in its first six arguments.
We stress that this constant is uniform as $\beta \to 0$.

\subsection{Error estimates}\label{sec:error-estimates}
Let $\{\cT_h\}_{h>0}$ be a sequence of conforming triangulations of $\Omega$ by simplices (triangles in 2D and tetrahedra in 3D), indexed by their mesh size $h := \max_{T \in \cT_h} \operatorname{diam}(T)$.
For each $h>0$ we assume that the finite dimensional spaces $\bbU$, $\bbP$, $\bbM$ and $\bbX$ are subordinate to $\cT_h$.
Solely for simplicity of notation, we do not write the explicit dependence on $h$ of these spaces nor the functions belonging to them.

Before stating our general a priori error estimate we must specify the approximation and stability properties of the finite dimensional spaces \eqref{eq:fin-dim-subspaces} used in the discrete scheme \eqref{all-semidiscrete-scheme}.
Later, in \cref{sec:specific-FES}, we shall specify finite element spaces and auxiliary projections that are compatible (with the exception of one estimate that remains conditional) with our abstract assumptions.

In all three assumptions below, the constant $C>0$ is a universal constant that may depend on the geometric properties of the sequence of meshes $\{\cT_h\}_{h>0}$ (e.g.~shape-regularity or quasi-uniformity), but is independent of $h$.

\begin{assump}[approximability properties of the finite element spaces]\label{assump:fes-approx}
There exist positive integers $k_\bu$, $k_\bm$ and $k_\phi$ such that the auxiliary projections $\obU$, $\obM$, $\oPhi$ of the exact solutions $\bu$, $\bm$, $\phi$ satisfy:
\begin{subequations}\label{subeqs:approx}
\begin{align}
\|\bu-\obU\|_1 &  \leq C h^{k_\bu} \|\bu\|_{k_\bu+1}, \label{eq:approx-u}\\
\|\bm-\obM\|_1 & \leq C h^{k_\bm} \|\bm\|_{k_\bm+1}, \label{eq:approx-m}\\
\|\phi-\oPhi\|_2 & \leq C h^{k_\phi} \|\phi\|_{k_\phi+2} \label{eq:approx-phi},
\end{align}
for almost every $t\in(0,T)$.
Moreover,
\begin{align}
\|\partial_t\bu- \partial_t\obU\|_{\LL^2(0,T;\bL^2)} &  \leq C h^{k_\bu} \|\partial_t \bu\|_{\LL^2(0,T;\bH^{k_\bu})}, \label{eq:approx-dt-u}\\
\|\partial_t\bm-\partial_t\obM\|_{\LL^2(0,T;\bL^2)} & \leq C h^{k_\bm} \|\partial_t \bm\|_{\LL^2(0,T;\bH^{k_\bm})}, \label{eq:approx-dt-m}\\
\|\partial_t\phi- \partial_t\oPhi\|_{\LL^2(0,T;\HH^1)} & \leq C h^{k_\phi} \|\partial_t \phi\|_{\LL^2(0,T;\HH^{k_\phi+1})} \label{eq:approx-dt-phi},
\end{align}
\end{subequations}
\end{assump}

\begin{assump}[stability of the projections]\label{assump:W1infty-stability}
For almost every $t\in(0,T)$, the following stability estimates hold:
\begin{subequations}\label{subeqs:w1inf-stab}
\begin{align}
\|\obU\|_{1,\infty} &  \leq C \|\bu\|_{1,\infty}, \label{W1inf-stab-Stokes}\\
\|\obM\|_{1,\infty} & \leq C \|\bm\|_{1,\infty}, \label{W1inf-stab-magnetization}\\
\|\oPhi\|_{2,\infty} & \leq C \|\phi\|_{2,\infty} \label{W1inf-stab-potential}.
\end{align}
\end{subequations}
\end{assump}

\begin{assump}[approximation of the initial conditions]\label{assump:ic-approx}
The initial condition triple $(\bU_0, \bM_0, \Phi_0) \in \bbUdiv \times \bbM \times \bbX$ (cf.~\eqref{eq:discrete-initial-conditions}) satisfies
\begin{align}
\|\bu_0-\bU_0\| &  \leq C h^{k_\bu} \|\bu_0\|_{k_\bu}, \label{eq:approx-ic-u}\\
\|\bm_0-\bM_0\| & \leq C h^{k_\bm} \|\bm_0\|_{k_\bm}, \label{eq:approx-ic-m}\\
\|\phi_0-\Phi_0\|_1 & \leq C h^{k_\phi} \|\phi_0\|_{k_\phi+1} \label{eq:approx-ic-phi}.
\end{align}
\end{assump}

\begin{remark}[required regularity for projections]
The norms appearing on the right-hand sides of the estimates in \cref{assump:fes-approx,assump:W1infty-stability,assump:ic-approx} and the implied regularity of the functions being approximated are consistent with the classical approximation theory literature.
Projectors/interpolators that are defined on more regular functions and may require higher-order norms on the right-hand sides of the aforementioned estimates are also valid options provided the exact solution is regular enough.
\end{remark}

We now establish the rates of convergence of the proposed abstract Galerkin scheme.

\begin{thm}[A priori error estimate]\label{thm:error-estimate}
Let $\sfU = (\bU,\bM,\Phi)$ be the Galerkin solution of the semidiscrete system \eqref{all-semidiscrete-scheme} provided by \cref{prop:semidiscrete-solvability} (\nameref*{prop:semidiscrete-solvability}).
Assume that \cref{assump:strongly-div-free,assump:proj-time-commute,assump:fes-approx,assump:W1infty-stability,assump:vel-aux-div-free} hold.
Let $\sfu = (\bu,\bm,\phi)$ be an exact solution of \eqref{ferrofluids-system} with the following regularity:
\begin{equation}\label{eq:error-theorem-regularity}
\begin{aligned}
\bu
&\in \LL^2(0,T;\bH^{k_\bu+1}(\Omega))
\cap \LL^\infty(0,T;\bW^{1,\infty}(\Omega)),
&\partial_t\bu
&\in \LL^2(0,T;\bH^{k_\bu}(\Omega)),\\
\bm
&\in \LL^2(0,T;\bH^{k_\bm+1}(\Omega))
\cap \LL^\infty(0,T;\bW^{1,\infty}(\Omega)),
&\partial_t\bm
&\in \LL^2(0,T;\bH^{k_\bm}(\Omega)),\\
\phi
&\in \LL^2(0,T;\HH^{k_\phi+2}(\Omega))
\cap \LL^\infty(0,T;\WW^{2,\infty}(\Omega)),
&\partial_t\phi
&\in \LL^2(0,T;\HH^{k_\phi+1}(\Omega)).
\end{aligned}
\end{equation}
Set $k:=\min\{k_\bu,k_\bm,k_\phi\}$.
Assume that the discrete initial conditions satisfy
\begin{equation}\label{eq:initial-energy-accuracy}
\sE[\bu_0-\bU_0,\bm_0-\bM_0,\phi_0-\Phi_0]
\leq C_{\mathrm{init}}h^{2k},
\end{equation}
where $C_{\mathrm{init}}$ is independent of $h$.

Then, for every $0<h\leq1$ and $t\in[0,T]$, the total discretization error $\sfe$ satisfies the a priori estimate
\begin{equation}\label{eq:bound-general}
\sE[\be_\bu,\be_\bm,e_\phi](t) + \int_0^t \sD^*[\be_\bu,\be_\bm,e_\phi] \dd s + \beta \int_0^t\|\bm\times\nabla\phi-\bM\times\nabla\Phi\|^2\dd s
\leq C h^{2k},
\end{equation}
where $C>0$ is independent of $h$ and $t$.
The constant may depend on $\Omega$, the physical parameters, the stabilization parameter $\kappa$, the final time $T$, the norms in \eqref{eq:error-theorem-regularity}, $C_{\mathrm{init}}$, and the uniform constants in \cref{assump:fes-approx,assump:W1infty-stability}.
With all other parameters fixed, it is uniform as $\beta\to0$ provided these solution norms, $C_{\mathrm{init}}$, and the approximation and stability constants remain uniformly bounded.
\begin{proof}
\smallskip
We estimate the right-hand side of \eqref{eq:stability-discretization-error} in several steps.

\noindent\textbf{1.~Projection errors.}
We first establish the following bound for the projection error $\sfe^p$:
\begin{equation}\label{eq:projection-errors-for-theorem}
\begin{aligned}
&\sup_{0\leq s\leq T}
\left(
\|\be_\bu^p(s)\|^2
+\|\be_\bm^p(s)\|^2
+\|e_\phi^p(s)\|_1^2
\right)\\
&\quad+\int_0^T
\Big(
\|\be_\bu^p\|_1^2
+\|\partial_t\be_\bu^p\|^2
+\|\be_\bm^p\|_1^2
+\|\partial_t\be_\bm^p\|^2
+\|e_\phi^p\|_2^2
+\|\partial_t e_\phi^p\|_1^2
\Big)\dd s
\leq Ch^{2k}.
\end{aligned}
\end{equation}
Integrating the spatial approximation estimates in \cref{assump:fes-approx} and using their time-derivative counterparts together with \eqref{eq:error-theorem-regularity} bounds the integral in \eqref{eq:projection-errors-for-theorem} by $Ch^{2k}$.
Moreover, \cref{assump:proj-time-commute} and \eqref{eq:error-theorem-regularity} imply that $\be_\bu^p,\be_\bm^p\in\HH^1(0,T;\bL^2(\Omega))$ and $e_\phi^p\in\HH^1(0,T;\HH^1(\Omega))$.
To control the supremum, let $Y$ be a real Hilbert space and $w\in\HH^1(0,T;Y)$, which we identify with its absolutely continuous representative on $[0,T]$.
The function $t\mapsto\|w(t)\|_Y^2$ is absolutely continuous and satisfies the chain rule
\begin{equation*}
\frac{\dd}{\dd t}\|w(t)\|_Y^2
=2(w(t),\partial_t w(t))_Y,
\qquad\text{for almost every }t\in(0,T).
\end{equation*}
Consequently, for every $s,t\in[0,T]$,
\begin{equation}\label{eq:time-energy-comparison}
\begin{aligned}
\|w(t)\|_Y^2
&=\|w(s)\|_Y^2+2\int_s^t(w(r),\partial_t w(r))_Y\dd r\\
&\leq\|w(s)\|_Y^2+2\int_0^T\|w(r)\|_Y\|\partial_t w(r)\|_Y\dd r.
\end{aligned}
\end{equation}
Integrating \eqref{eq:time-energy-comparison} with respect to $s$ over $(0,T)$ and dividing by $T$ replaces $\|w(s)\|_Y^2$ by $T^{-1}\|w\|_{\LL^2(0,T;Y)}^2$.
Applying the Cauchy--Schwarz inequality to the remaining integral and taking the supremum over $t\in[0,T]$ gives
\begin{equation}\label{eq:time-trace-bound}
\sup_{0\leq t\leq T}\|w(t)\|_Y^2
\leq \frac{1}{T}\|w\|_{\LL^2(0,T;Y)}^2
+2\|w\|_{\LL^2(0,T;Y)}\|\partial_t w\|_{\LL^2(0,T;Y)}.
\end{equation}
In particular, \eqref{eq:time-trace-bound} controls $\|w(0)\|_Y^2$ using only these time-integrated norms, without a separate assumption on the initial value.
Apply \eqref{eq:time-trace-bound} to $(w,Y)=(\be_\bu^p,\bL^2(\Omega))$, $(\be_\bm^p,\bL^2(\Omega))$, and $(e_\phi^p,\HH^1(\Omega))$.
The integral bound already established controls both factors on the right-hand side by $Ch^k$, proving \eqref{eq:projection-errors-for-theorem}, including the projection-error bound at $t=0$.

\smallskip\smallskip
\noindent\textbf{2.~Coefficients and consistency.}
From the $\LL^\infty$-based stability estimates in \cref{assump:W1infty-stability} and \eqref{eq:error-theorem-regularity}, we have that
\begin{equation}\label{eq:uniform-time-Linfty-based}
\|\obU(s)\|_{1,\infty}+\|\obM(s)\|_{1,\infty}+\|\oPhi(s)\|_{2,\infty}
\leq C, \qquad \text{a.e.~$s \in (0,T)$}.
\end{equation}
Consequently, the functions $\eta$ and $C_{\text{bulk}}$ from \cref{lem:stability-discrete-error} and \eqref{eq:bulk-consistency-rates} satisfy
\begin{equation}\label{eq:error-theorem-coefficient-bounds}
\|\eta\|_{\LL^1(0,T)}+\|C_{\text{bulk}}\|_{\LL^\infty(0,T)}\leq C.
\end{equation}
Recall the definition \eqref{eq:consistency-terms} of $\sG$.
Integrating the consistency bound \eqref{eq:bulk-consistency-rates} in time, using the bound \eqref{eq:projection-errors-for-theorem} of the projection errors and estimate \eqref{eq:error-theorem-coefficient-bounds}, therefore implies
\begin{equation}\label{eq:consistency-for-theorem}
\sG(t) \leq Ch^{2k}, \qquad \forall t \in [0,T].
\end{equation}

\smallskip
\noindent\textbf{3.~Projected and initial energies.}
The Maxwell residual $\Delta e_\phi^p+\div\be_\bm^p$ satisfies
\begin{equation}\label{eq:maxwell-res-bound}
\|\Delta e_\phi^p+\div\be_\bm^p\|^2
\leq2\|\Delta e_\phi^p\|^2+2\|\div \be_\bm^p\|^2
\leq C\left(\|e_\phi^p\|_2^2+\|\be_\bm^p\|_1^2\right).
\end{equation}
In turn, we bound the spin residual $\bm\times\nabla\phi-\obM\times\nabla\oPhi$ as
\begin{equation}\label{eq:spin-res-bound}
\begin{aligned}
\|\bm\times\nabla\phi-\obM\times\nabla\oPhi\|^2
& \leq \left( \|\obM\|_{0,\infty} \|\nabla e_\phi^p\| + \|\nabla \phi\|_{0,\infty}\|\be_\bm^p\| \right)^2\\
& \leq \tilde C \left( \|e_\phi^p\|_1^2 + \|\be_\bm^p\|^2\right),
\end{aligned}
\end{equation}
where $\tilde C$ is independent of time and depends only on the uniform projection stability constants from \cref{assump:W1infty-stability} and the norms of $\phi$ and $\bm$ in $\LL^\infty(0,T;\WW^{1,\infty}(\Omega))$ and $\LL^\infty(0,T;\bW^{1,\infty}(\Omega))$, respectively.

Recall the definitions of $\sE$ and $\sD^*$ in \eqref{eq:energy} and \eqref{eq:simpler-dissipation}.
Therefore, combining \eqref{eq:maxwell-res-bound} and \eqref{eq:spin-res-bound} with \eqref{eq:projection-errors-for-theorem} results in
\begin{equation}\label{eq:projected-energy-for-theorem}
\sup_{0\leq s\leq T}\sE[\be_\bu^p,\be_\bm^p,e_\phi^p](s)
+\int_0^T\sD^*[\be_\bu^p,\be_\bm^p,e_\phi^p]\dd s
+\beta \int_0^T\|\bm\times\nabla\phi-\obM\times\nabla\oPhi\|^2\dd s
\leq Ch^{2k}.
\end{equation}
The initial accuracy assumption \eqref{eq:initial-energy-accuracy} yields $\sE[\be_\bu,\be_\bm,e_\phi](0)\leq C_{\mathrm{init}}h^{2k}$.
Since $\sfE=\sfe^p-\sfe$ and $\sE$ is quadratic, the initial discrete-error energy $\sE_0$ defined in \eqref{eq:initial-error} therefore satisfies
\begin{equation}\label{eq:initial-discrete-energy-for-theorem}
\sE_0
\leq2\sE[\be_\bu^p,\be_\bm^p,e_\phi^p](0)
+2\sE[\be_\bu,\be_\bm,e_\phi](0)
\leq Ch^{2k}.
\end{equation}

\smallskip
\noindent\textbf{4.~Galerkin error.}
Substituting bounds \eqref{eq:error-theorem-coefficient-bounds}, \eqref{eq:consistency-for-theorem}, \eqref{eq:projected-energy-for-theorem}, and \eqref{eq:initial-discrete-energy-for-theorem} into the discrete perturbation estimate \eqref{eq:stability-discretization-error} yields
\begin{equation*}
\sE[\be_\bu,\be_\bm,e_\phi](t)
+\int_0^t\sD^*[\be_\bu,\be_\bm,e_\phi]\dd s
+ \beta\int_0^t\|\bm\times\nabla\phi-\bM\times\nabla\Phi\|^2\dd s
\leq Ch^{2k}, \qquad \forall t \in [0,T].
\end{equation*}
This finishes the proof.
\end{proof}
\end{thm}

The regularity \eqref{eq:error-theorem-regularity} implies
\begin{equation*}
\bu \in\CC([0,T];\bH^{k_\bu}(\Omega)),\qquad
\bm \in\CC([0,T];\bH^{k_\bm}(\Omega)), \qquad
\phi \in\CC([0,T];\HH^{k_\phi+1}(\Omega)),
\end{equation*}
because each component belongs to $\HH^1(0,T;Y)$ with the corresponding spatial Hilbert space $Y$.
Consequently, the initial-data norms in \cref{assump:ic-approx} are finite, and that assumption is a sufficient condition for \eqref{eq:initial-energy-accuracy}.
No separate spatial approximation bound for the auxiliary projections at $t=0$ is needed.

\begin{remark}[vanishing of penalization]
The stabilization terms $\kappa \chi\, (\Delta\Phi + \div\bM, \div \bV_\bm)$ and $\kappa(\Delta\Phi + \div\bM, \Delta V_\phi)$ in \eqref{eq:semidiscrete-scheme} provide least-squares control of the discrete Maxwell residual $r_h:=\Delta\Phi+\div\bM$, which vanishes identically on an exact constrained solution.
Since $\Delta\phi+\div\bm=0$, estimate \eqref{eq:bound-general} implies
\begin{equation*}
\|r_h\|_{\LL^2(0,T;\LL^2(\Omega))}=\mathcal O(h^k),
\end{equation*}
and shows that the added numerical dissipation vanishes as $h\to0$ with fixed coefficients in \eqref{eq:semidiscrete-scheme}.
This stabilization does not introduce a fixed magnetic diffusivity but rather controls the combined quantity $\Delta\Phi + \div\bM$ without separately controlling $\nabla\bM$ and $\Delta\Phi$.

\end{remark}

\section{Specific finite element spaces and auxiliary projections}\label{sec:specific-FES}

We now specify finite element spaces and auxiliary projections $\osfU := (\obU, \obM, \oPhi)$ that are compatible with \cref{assump:strongly-div-free,assump:proj-time-commute,assump:fes-approx,assump:W1infty-stability,assump:ic-approx,assump:vel-aux-div-free}, with the (possible) exception of estimate \eqref{W1inf-stab-Stokes} for $d=3$ (see \cref{sec:u-p} for further details).
We also choose discrete initial conditions $(\bU_0,\bM_0,\Phi_0)\in\bbUdiv\times\bbM\times\bbX$ satisfying \eqref{eq:initial-energy-accuracy} under the regularity assumptions of \cref{assump:ic-approx}.
These initial conditions are obtained by solving linear projection problems, and hence they are computationally easy to implement.

From now on, we will assume that $\{\cT_h\}_{h>0}$ is shape-regular.
For each $h>0$, we let $\cT_h^\rmb$ be the barycentric refinement (also called the Alfeld split) of $\cT_h$, which is obtained by joining the barycenter of each element of $\cT_h$ to each face to produce $(d+1)$ subsimplices, as explained in \cref{sec:magnetic-potential} below.

\subsection{Velocity-pressure pair}\label{sec:u-p}

For $d\in\{2,3\}$, the most classical finite element pair that satisfies \cref{assump:strongly-div-free} and is inf-sup stable on $\cT_h^\rmb$ for $\ell_\bu\geq d$ \cite[Corollary 6.2]{GuzmanNeilan2018} is the Scott--Vogelius pair \cite{ScottVogelius1985}.
It consists of continuous piecewise vector-valued polynomials of degree at most $\ell_\bu$ for the velocity and discontinuous piecewise scalar polynomials of degree at most $\ell_\bu-1$ for the pressure.
We write $\bbU =  \bsP_{\ell_\bu}(\cT_h^\rmb) \cap \bH_0^1(\Omega)$ and $\bbP = \sP_{\ell_\bu-1}(\cT_h^\rmb) \cap \LL^2_\#(\Omega)$.
Here, $\sP_\ell(\cT)$ denotes piecewise polynomials of degree at most $\ell$ on $\cT$, without continuity constraints.
For the approximation of the initial condition, the discrete Leray projection $\bU_0 = \cP_{h,\div} \bu_0 \in \bbU_{\div}$ of $\bu_0$ is defined as the first component of the solution of the problem:
find $(\bU_0,P_0) \in \bbU \times \bbP$ such that
\begin{equation}\label{eq:disc-Leray}
\begin{aligned}
(\bU_0, \bV_\bu) - (\div \bV_\bu, P_0) &= (\bu_0, \bV_\bu), \qquad \forall\, \bV_\bu \in \bbU,\\
-(\div \bU_0, V_p) & = 0, \qquad \forall\, V_p \in \bbP.
\end{aligned}
\end{equation}
The discrete Leray projection is the $\bL^2(\Omega)$-orthogonal projection onto $\bbUdiv$.
Its best-approximation property, combined with a flux-preserving polynomial approximation and the local divergence lifting in \cite[Theorem 3.1]{GuzmanNeilan2018}, yields \eqref{eq:approx-ic-u} with $k_\bu=\ell_\bu$.

Regarding the auxiliary projection, for $d=2$ we utilize the \emph{local Fortin projection} $\sF_h : \bW^{1,1}_0(\Omega) \rightarrow \bbU$ from \cite[Section 3]{Eickmann2026} and set $\obU = \sF_h \bu$.
From \cite[Theorem 7 and Remark 9]{Eickmann2026}, we deduce that $\obU(t) \in \bbU_{\div}$ for each time $t$ and estimates \eqref{eq:approx-u}, \eqref{eq:approx-dt-u} and \eqref{W1inf-stab-Stokes} hold for $k_\bu = \ell_\bu$.

For $d=3$, the existence of a Fortin operator with the same characteristics is still a pending issue.
However, two of the authors of \cite{Eickmann2026} have suggested in a private communication \cite{GuzmanNeilan2026private} that it is highly likely that the arguments from \cite{Eickmann2026} could be adapted to the 3D setting.
We do not address this question as it is outside the scope of the present paper, and leave it pending.

We finish this section by mentioning that other inf-sup stable velocity-pressure \emph{macroelement} pairs that satisfy \cref{assump:strongly-div-free} are available in the literature, such as Arnold--Qin \cite{ArnoldQin1992}, Alfeld--Sorokina \cite{AlfeldSorokina2016}, and the Guzmán--Neilan pairs found in \cite{GuzmanNeilan2014-triangle,GuzmanNeilan2014-tetrahedra,GuzmanNeilan2018}.
We are unaware, however, of any results that could be utilized to construct an auxiliary projection $\obU$ of $\bu$ with the stability and approximation properties \eqref{eq:approx-u}, \eqref{eq:approx-dt-u} and \eqref{W1inf-stab-Stokes}.

\subsection{Magnetization}
For $d \in \{2,3\}$, we know from \cite[Theorems 2.1, 3.1 and 4.1]{ScottZhang1990} that the Scott--Zhang interpolant $\mathcal{S\!Z}_h: \WW^{1,1}(\Omega) \rightarrow C(\overline\Omega) \cap \sP_{\ell_\bm}(\cT_h)$, where $C(\overline\Omega) \cap \sP_{\ell_\bm}(\cT_h)$ is the space of continuous piecewise Lagrange elements of degree $\ell_\bm \geq 1$, is stable in $\WW^{1,\infty}(\Omega)$ and optimal (in the sense of Bramble--Hilbert) in $\HH^1(\Omega)$.
Moreover, we know (see e.g., \cite{CiarletHandbookError1991}) that the $\LL^2$-projection $\cQ_h: \LL^2(\Omega) \rightarrow C(\overline\Omega) \cap \sP_{\ell_\bm}(\cT_h)$ is optimal in $\LL^2(\Omega)$ as well.

Consequently, setting $\bbM = \bC(\overline\Omega) \cap \bsP_{\ell_\bm}(\cT_h)$, $\bM_0 = \cQ_h \bm_0$ and $\obM = \mathcal{S\!Z}_h \bm$ (where the operators are understood as acting componentwise) yields estimates \eqref{eq:approx-m}, \eqref{eq:approx-dt-m}, \eqref{W1inf-stab-magnetization} and \eqref{eq:approx-ic-m} with $k_\bm = \ell_\bm$.

\subsection{Magnetic potential}\label{sec:magnetic-potential}
In this section, we construct the auxiliary potential $\oPhi$ by using an operator that takes into account the degrees of freedom associated to the derivative values.
This operator will allow us to retain $\HH^2$-conforming macroelements without requiring pointwise derivatives of the function being approximated, while preserving optimal approximation rates (in the sense of Bramble--Hilbert).
The construction follows the averaging principle of Girault and Scott \cite{GiraultScott2002}; an averaged Hsieh--Clough--Tocher (HCT) construction is also discussed in \cite[Remark 5.2]{DuanDuLiuZhang2019}.
Since the potential space has no essential boundary conditions, we use volume averages for every degree of freedom and enforce only the zero-mean condition.
We present the construction and the estimates needed here, including $\WW^{2,\infty}$ stability.

On each $T\in\cT_h$, let $S(T)$ be the full \cite[Section 46]{CiarletHandbookError1991} or reduced \cite{Ciarlet1978} HCT space when $d=2$, or a full \cite{Walkington2014} or reduced \cite[eq.~(4.3), Lemmas 4.1 and 4.5, and Remark 4.2]{BrubeckLianParker2026} Walkington tetrahedral macroelement space when $d=3$; see also the reduced construction in \cite{BrubeckGithub2026}.
Each macroelement is subdivided by joining its barycenter to its faces, and one fixed macroelement family is used throughout $\{\cT_h\}_{h>0}$.
For a reduced Walkington space, we require the chosen reduction to commute with translations and uniform dilations:
\begin{equation}\label{eq:potential-reduced-similarity}
S(\bF(K))=\{w\circ\bF^{-1}:w\in S(K)\},
\qquad \bF(\bx)=\bb+\alpha\bx,
\end{equation}
for every nondegenerate tetrahedron $K$, every $\bb\in\RR^3$, and every $\alpha>0$.
We also require its homogeneous defining relations, when expressed in fixed polynomial bases on a reference Alfeld split, to have coefficients depending continuously on the ordered vertex coordinates.
The retained degrees of freedom must be unisolvent on every nondegenerate tetrahedron.
We denote by $\widetilde{\bbX} \subseteq \HH^2(\Omega)$ the space that results from globally assembling all local spaces $S(T)$, $T \in \cT_h$.
In particular, shared Hermite degrees of freedom are identified, including the vertex Hessians for Walkington's element.
As attested in \cite{CiarletHandbookError1991,Ciarlet1978,Walkington2014,BrubeckLianParker2026,BrubeckGithub2026}, each of the local spaces $S(T)$ contains $\sP_r(T)$, where
\begin{equation}\label{eq:potential-reproduction-degree}
\text{$r = 3$ (full HCT); \quad $r=2$ (reduced HCT); \quad $r=4$ (Walkington); \quad $r=3$ (reduced Walkington)}.
\end{equation}
Here, $r$ is the \emph{polynomial reproduction degree}, i.e., the largest integer for which $\sP_r(T) \subseteq S(T)$.
We set
\begin{equation*}
\bbX:=\widetilde{\bbX}\cap\LL^\infty_\#(\Omega)
\subset \WW^{2,\infty}_\#(\Omega).
\end{equation*}
For each global degree of freedom $\lambda_a$, we fix, once and for all, an incident macroelement $T_a \in \cT_h$.
We use a fixed orientation for each shared degree of freedom associated to normal derivatives.
Degrees of freedom located on the boundary are retained.

For each degree of freedom $a$, let $\psi_a \in S(T_a)$ be the uniquely determined dual function with respect to the $\LL^2(T_a)$ inner product; that is:
\begin{equation}\label{eq:local-dual-def}
\int_{T_a} w\psi_a= \lambda_a(w), \quad\forall w \in S(T_a).
\end{equation}
%

For each $T \in \cT_h$, let $Q_T:\LL^1(T)\rightarrow S(T)$ be defined by
\begin{equation}\label{eq:potential-local-moment-projection}
\int_T(Q_Tv)w=\int_Tvw,
\qquad \forall w\in S(T),\quad v\in\LL^1(T).
\end{equation}
Since $S(T)\subset\LL^\infty(T)$, the right-hand side of \eqref{eq:potential-local-moment-projection} is finite, and the positive definiteness of the $\LL^2(T)$ mass matrix ensures that $Q_Tv$ is uniquely determined.
For each $v \in \LL^1(\Omega)$, we define $J_hv \in \tilde\bbX$ as the function whose degrees of freedom $\lambda_a(J_h v)$ are uniquely determined by
\begin{equation}\label{eq:potential-averaged-dofs}
\lambda_a(J_hv):=\lambda_a(Q_{T_a}v)
=\int_{T_a}v\psi_a,
\end{equation}
for every global degree of freedom $\lambda_a$.
The resulting linear operator $J_h : \LL^1(\Omega) \rightarrow \tilde\bbX$ is a projector (i.e.~$J_h^2 = J_h$) and reproduces $\sP_r$.
For functions of zero mean, we correct $J_h$ as follows:
we define the linear operator $I_h^\phi: \LL^1_\#(\Omega) \rightarrow \bbX$ by
\begin{equation}\label{eq:potential-mean-corrected-projector}
I_h^\phi v:=J_hv-\frac{1}{|\Omega|}\int_\Omega J_hv, \qquad \forall v \in \LL^1_\#(\Omega).
\end{equation}
Clearly, $I_h^\phi$ is also a projector.
\begin{lemma}[approximability and stability properties of $J_h$]\label{lem:potential-interpolation}
For each $0<h\leq1$ and every $1\leq p\leq\infty$, the operator $J_h$ satisfies
\begin{equation}\label{eq:potential-averaged-approximation}
\|v-J_h v\|_{j,p}\leq Ch^{s-j}\|v\|_{s,p},
\qquad
j\in\{0,1,2\},\quad 2\leq s\leq r+1,
\end{equation}
for every integer $s$ in this range and every $v\in\WW^{s,p}(\Omega)$.
Moreover,
\begin{equation}\label{eq:potential-averaged-stability}
\|J_h v\|_{2,p}\leq C\|v\|_{2,p},
\qquad v\in\WW^{2,p}(\Omega).
\end{equation}
The constants may depend on $p$, the fixed macroelement family and its shape-regularity bound, but are independent of $h$.
\end{lemma}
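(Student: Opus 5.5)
The plan is the standard Girault--Scott / Scott--Zhang argument: local stability, polynomial reproduction on a patch, Bramble--Hilbert, and summation over elements.

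\textbf{Step 1: local stability of each degree of freedom.} For $T\in\cT_h$, define the patch $\omega_T$ as the union of $T$ and all chosen elements $T_a$ for the degrees of freedom $a$ of $T$. By conformity and shape regularity, each such $T_a$ shares a vertex with $T$. Hence $\omega_T$ has diameter $\lesssim h_T$ and overlaps only a bounded number of other patches. Let $\phi_a^T$ be the local nodal basis of $S(T)$ dual to $\lambda_a$, oriented consistently with the fixed normal orientations. I will show the bound
\begin{equation*}
\|J_hv\|_{0,p;T}\le C\sum_a|\lambda_a(J_hv)|\,\|\phi_a^T\|_{0,p;T},\qquad |\lambda_a(J_hv)|\le\|\psi_a\|_{0,\infty;T_a}\|v\|_{0,1;T_a}.
\end{equation*}
The key scaling is as follows. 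A degree of freedom of derivative order $m_a$ (values $m_a=0$, gradients or normal derivatives $m_a=1$, Walkington Hessians $m_a=2$) satisfies $\|\phi_a^T\|_{0,p;T}\simeq h_T^{m_a+d/p}$ and $\|\psi_a\|_{0,\infty;T_a}\simeq h^{-m_a-d}$. Together with Hölder's inequality, this gives
\begin{equation*}
\|J_hv\|_{0,p;T}\le C\|v\|_{0,p;\omega_T}.
\end{equation*}
An inverse inequality then yields $|J_hv|_{j,p;T}\le Ch^{-j}\|v\|_{0,p;\omega_T}$ for $j\le2$.

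\textbf{Step 2: the main obstacle, uniformity of the scaling constants.} The scalings of $\phi_a^T$ and $\psi_a$ are not given by a single affine pullback. HCT and Walkington spaces are not affine-equivalent, since normal-derivative degrees of freedom do not transform covariantly. I plan to handle this with a compactness argument, as follows.
\begin{enumerate}
\item Use \eqref{eq:potential-reduced-similarity} to normalize $T_a$ (and $T$) by translation and dilation to unit diameter. This keeps the space invariant and scales the degrees of freedom by $h^{m_a}$.
\item The normalized shapes form a compact set of nondegenerate simplices, by shape regularity.
\item The mass matrix and the degree-of-freedom matrix depend continuously on the vertex coordinates. This uses the assumed continuity of the defining relations, and holds trivially for full HCT and Walkington.
\item Unisolvence makes these matrices invertible throughout the compact set.
\end{enumerate}
Hence the norms of $\psi_a$ and $\phi_a^T$ on the normalized configuration are uniformly bounded. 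The same argument also bounds the inverse-inequality constant on $S(T)$.

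\textbf{Step 3: approximation.} Since each $Q_{T_a}$ reproduces $S(T_a)\supset\sP_r$, the operator $J_h$ reproduces $\sP_r$ on $T$. For $q\in\sP_r(\omega_T)$, Step 1 gives
\begin{equation*}
|v-J_hv|_{j,p;T}\le|v-q|_{j,p;T}+|J_h(v-q)|_{j,p;T}\le C\sum_{i\le j}h^{i-j}\,|v-q|_{i,p;\omega_T}.
\end{equation*}
To justify this sum over $i\le j$, replace the $\LL^p$ bound of Step 1 with the one produced by writing $J_h(v-q)$ in the local basis. Alternatively, use only the $\LL^p$ bound together with $h^{-j}$ scaling. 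In either version, choose $q$ as the averaged Taylor polynomial on the star-shaped patch $\omega_T$. Bramble--Hilbert then gives $\sum_{i\le j}h^{i-j}|v-q|_{i,p;\omega_T}\le Ch^{s-j}|v|_{s,p;\omega_T}$ for $s\le r+1$; patches that are not star-shaped can be handled by a chain of overlapping elements.

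For $p<\infty$, raise to the $p$-th power and sum over $T$, using the finite overlap of the patches. For $p=\infty$, take the maximum over $T$. This proves \eqref{eq:potential-averaged-approximation}.

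\textbf{Step 4: stability.} Stability follows from the case $s=j=2$ via the triangle inequality,
\begin{equation*}
\|J_hv\|_{2,p}\le\|v\|_{2,p}+\|v-J_hv\|_{2,p}\le C\|v\|_{2,p},
\end{equation*}
which is \eqref{eq:potential-averaged-stability}.
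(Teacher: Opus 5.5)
Your proposal is correct and follows essentially the same route as the paper: local stability of $J_h$ from the averaged-coefficient expansion, similarity scaling plus a compactness argument over normalized element shapes to bound the nodal and dual functions uniformly, then polynomial reproduction and a single-polynomial Bramble--Hilbert estimate on patches (the paper cites Ern--Guermond), with stability read off from $s=j=2$. The differences are cosmetic: you pair $\|\psi_a\|_{0,\infty}$ with $\|v\|_{0,1}$ and invoke an inverse inequality, whereas the paper bounds $\|\psi_a\|_{0,p'}$ and $\|\nabla^j\theta_{a,T}\|_{0,p}$ directly; also take $\omega_T$ to be the full vertex star of $T$, as the paper does, since $T$ and a chosen $T_a$ may meet only at a vertex, and this is exactly what your chain-of-elements remark supplies.
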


We postpone the proof of \cref{lem:potential-interpolation} to \cref{app:proof-interp}.

\begin{corollary}[approximability and stability properties of $I_h^\phi$]\label{cor:Ih-interpolation}
Estimates \eqref{eq:potential-averaged-approximation} and \eqref{eq:potential-averaged-stability} are also valid for $I_h^\phi$ provided the spaces $\WW^{s,p}(\Omega)$ and $\WW^{2,p}(\Omega)$ are replaced by $\WW^{s,p}_\#(\Omega)$ and $\WW^{2,p}_\#(\Omega)$, respectively.
\begin{proof}
Notice that if $\int_\Omega v = 0$, then H\"older's inequality gives, for every $1\leq p\leq\infty$,
\begin{equation*}
|\Omega|^{1/p}\left|\frac{1}{|\Omega|}\int_\Omega J_hv\right|
\leq\|J_hv-v\|_{0,p}, \qquad
|\Omega|^{1/p}\left|\frac{1}{|\Omega|}\int_\Omega J_hv\right|
\leq\|J_hv\|_{0,p},
\end{equation*}
with the convention $1/\infty:=0$.
The common left-hand side is the $\LL^p(\Omega)$ norm of the mean $|\Omega|^{-1}\int_\Omega J_hv$ of $J_h v$ over $\Omega$.
Then, the result follows from the triangle inequality and estimates \eqref{eq:potential-averaged-approximation} and \eqref{eq:potential-averaged-stability}.
\end{proof}
\end{corollary}

We now choose $\oPhi(t)=I_h^\phi\phi(t)$.
\cref{cor:Ih-interpolation} yields for any integer $1\leq k_\phi\leq r-1$ and $(p,j,s)=(2,2,k_\phi+2)$
\begin{equation}\label{H2-approx-potential}
\|e_\phi^p\|_2
=\|\phi-I_h^\phi\phi\|_2
\leq Ch^{k_\phi}\|\phi\|_{k_\phi+2}.
\end{equation}
The operator $I_h^\phi$ is fixed in time and bounded on $\LL^2_\#(\Omega)$ for each mesh, so it commutes with the weak time derivative.
Applying \eqref{eq:potential-averaged-approximation} for $I_h^\phi$ with $(p,j,s)=(2,1,k_\phi+1)$ gives
\begin{equation}\label{eq:potential-time-approximation}
\|\partial_t e_\phi^p\|_{\LL^2(0,T;\HH^1)}
\leq Ch^{k_\phi}
\|\partial_t\phi\|_{\LL^2(0,T;\HH^{k_\phi+1})}.
\end{equation}
For the approximation of the initial condition $\phi_0$, we choose $\Phi_0 \in \bbX$ as the Ritz projection of $\phi_0$ with Neumann conditions.
Namely, $\Phi_0$ is the unique solution of
\begin{equation*}
\int_\Omega \nabla \Phi_0 \cdot \nabla V_\phi = \int_\Omega \nabla \phi_0 \cdot \nabla V_\phi, \qquad \forall \, V_\phi \in \bbX.
\end{equation*}
Quasi-best approximation yields
\begin{equation}\label{eq:phi0-best-approx}
\|\phi_0 - \Phi_0\|_1
\leq C \inf_{\substack{V_\phi \in \bbX}} \|\phi_0 - V_\phi\|_1
\leq C \|\phi_0 - I_h^\phi \phi_0\|_1.
\end{equation}
Choosing again $(p,j,s) = (2,1,k_\phi+1)$ gives
\begin{equation*}
\|\phi_0-\Phi_0\|_1\leq Ch^{k_\phi}\|\phi_0\|_{k_\phi+1}.
\end{equation*}
Together with \eqref{eq:potential-averaged-stability} for $p=\infty$, these are precisely \eqref{eq:approx-phi}, \eqref{eq:approx-dt-phi}, \eqref{W1inf-stab-potential}, and \eqref{eq:approx-ic-phi}.
The full Walkington space allows for $k_\phi = 3$, the reduced Walkington and full HCT spaces allow for $k_\phi = 2$, while the reduced HCT allows for $k_\phi = 1$.
All these bounds use exactly the potential regularity stated in \eqref{eq:error-theorem-regularity}, without additional pointwise differentiability assumptions.
\section{Computational experiments}\label{sec:experiments}

In this section we illustrate the performance of our numerical scheme presented in \cref{sec:discretization} in 2D and 3D domains.
Our implementation utilizes the open-source finite element library \texttt{Firedrake} \cite{firedrake} and the temporal discretization library \texttt{Irksome} \cite{irksome}.
In all experiments below we consider the physical parameters $\nu = 0.1$, $\mu_0 = 10^{-6}$, $\tau = 10^{-5}$ and $\chi = 0.5$.
For time discretization, we utilize the Qin--Zhang symplectic DIRK scheme \cite{QinZhang1992}, which retains its formal second-order accuracy under variable time stepping.
We focus on spatial discretization and leave the intricacies of its coupling with temporal discretizations for future research.
Non-homogeneous Dirichlet conditions for the velocity are implemented in the discrete scheme in the usual way.
We define the discretization error associated to $\bh = \nabla\phi$ as
\begin{equation*}
\be_\bh := \nabla\phi - \nabla\Phi = \nabla e_\phi.
\end{equation*}
Finally, we mention that in all convergence tables the scalar parameter $h$ denotes the mesh size of the original mesh $\cT_h$ and not of its barycentric refinement $\cT_h^\rmb$.

\subsection{Convergence test in 2D}\label{sec:conv-test-2D}
In this test, we experimentally corroborate the spatial convergence rates asserted by \cref{thm:error-estimate} in 2D.
Our computational domain is the unit square $\Omega = (0,1)^2$ and we consider uniform triangulations $\cT_h$ of size $h$.
We set the final time to $T = 0.2$ and consider uniform time steps $\Delta t = h^{3/2}/32$, shortening the last time step if necessary to reach $T$.
We set the stabilization parameter to $\kappa = 0.1$ and the nonlinear relaxation parameter to $\beta = 10^{-5}$.
We consider the applied magnetic field $\bh_a = \nabla\phi_a$, where the potential $\phi_a(t,x,y) = (1-t)^2 (x^2 - y^2)$ is harmonic, and manufactured solutions
\begin{gather}
\bu = e^{-t} [\sin(\pi x)^2 \sin(2\pi y), 0.5 \sin(2\pi x) (\cos(2\pi y) - 1)], \qquad p = (x-1)^3 \sin(2\pi y t) - C_1(t),\\
\bh = \nabla\phi, \qquad \phi = t^2  e^x y - C_2(t), \qquad \bm = t^3 \bcurl(x^2 (1-x)^2 y^2 (1-y)^2 ) - \bh + \bh_a,
\end{gather}
where $C_1(t)$ and $C_2(t)$ are such that $\int_\Omega p = \int_\Omega \phi = 0$ for each $t \in [0,T]$.
We add forcing terms $\bf_\bu$ and $\bf_\bm$ to the right-hand sides of \eqref{eq:momentum} and \eqref{eq:magnetization}, respectively, so that the manufactured fields satisfy the resulting equations.
In the equivalent formulation (cf.~\cref{sec:equivalent-formulation}) this requires replacing \eqref{eq:full-magnetic-incomp} by $\div(\bF + \bf_\bm) = 0$ and its corresponding boundary condition \eqref{eq:total-magnetic-flux} by $(\bF + \bf_\bm) \cdot \bnu = 0$ on $\partial\Omega$.
In the semidiscrete scheme \eqref{all-semidiscrete-scheme}, we add the term $-(\bf_\bm, \nabla V_\phi)$ to the right-hand side of the last equation of \eqref{eq:semidiscrete-scheme}.

For spatial discretization we utilize the lowest-order Scott–Vogelius pair for $(\bu,p)$ as discussed in \cref{sec:u-p}, continuous piecewise quadratic polynomials for $\bm$, and the full Hsieh--Clough--Tocher finite element for $\phi$.
Hence, $k_\bu = k_\bm = k_\phi =2$.
In agreement with the discussion in \cref{sec:specific-FES}, we initialize the velocity, magnetization and magnetic potential fields by respectively using the discrete Leray projection \eqref{eq:disc-Leray}, the $\bL^2$ projection and the Ritz projection with Neumann boundary conditions.

We summarize our findings in \cref{tab:convergence_rates_2D}, where $e_{\mathrm{spin}}:=\bm\times\bh-\bM\times\nabla\Phi$.
Experimental convergence rates are in agreement with \cref{thm:error-estimate}.
The rate in $\LL^\infty \bL^2$ for $\bu$ is one order higher than the rate guaranteed by \cref{thm:error-estimate}, while the rates for $\bm$ and $\bh$ do not enjoy this additional order of convergence.
This hints that an argument in the spirit of Wheeler's trick may be implementable in our context to derive higher rates for the velocity field.
The fourth-order Maxwell stabilization introduces consistency terms that are controlled using $\HH^2$ estimates in the present analysis, complicating the derivation of sharper magnetic error estimates.
Whether the stabilization also explains the observed quadratic convergence in this experiment remains an open question.
Time discretization should not be the reason for the observed quadratic convergence of $\bm$ and $\bh$:
the discretization error introduced by the Qin–Zhang method behaves, at least formally, as $(\Delta t)^2 \sim h^3$.
Moreover, on the finest mesh, the $\LL^\infty(\bL^2)$ errors for $\bh$ and $\bm$ obtained with $\Delta t=h^{3/2}/16$ (not reported in this manuscript) differ by less than $0.25\%$ from those reported in \cref{tab:convergence_rates_2D}.
This strongly suggests that temporal discretization error does not control the observed magnetic convergence rates.
We leave a rigorous investigation of these questions for future research.

\begin{table}
    \setlength{\tabcolsep}{3pt}
    \resizebox{\textwidth}{!}{
	\begin{tabular}{c c c c c c c c c c c c c c}
	\toprule
	DOFs & $h$ & $\|\be_\bu\|_{\LL^\infty(\bL^2)}$ & Rate & $\|\nabla\be_\bu\|_{\LL^2(\bbL^2)}$ & Rate & $\|\be_\bh\|_{\LL^\infty(\bL^2)}$ & Rate & $\|\be_\bm\|_{\LL^\infty(\bL^2)}$ & Rate & $\|\div(\be_\bm+\be_\bh)\|_{\LL^2(\LL^2)}$ & Rate & $\|e_{\mathrm{spin}}\|_{\LL^2(\LL^2)}$ & Rate\\
	\midrule
	   3783 & 0.1768 & 6.3260\texttt{E}-03 &   -- & 1.6034\texttt{E}-01 &   -- & 2.5990\texttt{E}-06 &   -- & 1.8607\texttt{E}-06 &   -- & 3.6221\texttt{E}-05 &   -- & 7.3217\texttt{E}-07 &   -- \\
	  14727 & 0.0884 & 8.4083\texttt{E}-04 & 2.91 & 4.8924\texttt{E}-02 & 1.71 & 4.1881\texttt{E}-07 & 2.63 & 5.2240\texttt{E}-07 & 1.83 & 1.4480\texttt{E}-05 & 1.32 & 2.3732\texttt{E}-07 & 1.63 \\
	  58119 & 0.0442 & 1.1223\texttt{E}-04 & 2.91 & 1.3363\texttt{E}-02 & 1.87 & 1.2443\texttt{E}-07 & 1.75 & 1.6524\texttt{E}-07 & 1.66 & 6.2531\texttt{E}-06 & 1.21 & 6.6221\texttt{E}-08 & 1.84 \\
	 230919 & 0.0221 & 1.4387\texttt{E}-05 & 2.96 & 3.4375\texttt{E}-03 & 1.96 & 3.2603\texttt{E}-08 & 1.93 & 4.4775\texttt{E}-08 & 1.88 & 1.8949\texttt{E}-06 & 1.72 & 1.6940\texttt{E}-08 & 1.97 \\
	 920583 & 0.0110 & 1.8110\texttt{E}-06 & 2.99 & 8.6616\texttt{E}-04 & 1.99 & 8.2766\texttt{E}-09 & 1.98 & 1.1401\texttt{E}-08 & 1.97 & 3.9010\texttt{E}-07 & 2.28 & 4.2806\texttt{E}-09 & 1.98 \\
	\bottomrule
	\end{tabular}
    }
    \centering
    \caption{\emph{Spatial convergence rates in 2D for $k_\bu = k_\bm = k_\phi = 2$ and $\beta = 10^{-5}$}}
    \label{tab:convergence_rates_2D}
\end{table}

\subsection{Convergence test in 3D}\label{sec:conv-test-3D}
In this test, we explore the convergence behavior of the scheme in three dimensions.
We consider the computational domain $\Omega = (0,1)^3$ uniformly partitioned with tetrahedra of size $h$.
We set $T=0.1$ and discretize $[0,T]$ with uniform time steps $\Delta t = h^{3/2}/16$, shortening the last time step if necessary to reach $T$.
We set the stabilization parameter to $\kappa = 0.1$ and the nonlinear relaxation parameter to $\beta = 0$.
The applied magnetic field $\bh_a = \nabla\phi_a$ is given by the harmonic potential $\phi_a(t,x,y,z) = (1 - t)^2  ((x^5 - 10  x^3  y^2 + 5  x  y^4) + (y^5 - 10  y^3  z^2 + 5  y  z^4))$.
We consider the manufactured solutions
\begin{gather}
\bu = e^{-t} [\sin(\pi x) \cos(\pi y) \cos(\pi z), -2 \cos(\pi x) \sin(\pi y) \cos(\pi z),\cos(\pi x) \cos(\pi y) \sin(\pi z)],\\
p = ((x - 1)^2 (z - 1)) \sin(2 \pi y \,t) - C_1(t), \qquad \bh = \nabla\phi, \qquad \phi = (t + 1) ((x^3 - y^2)^2 + (y^2 + z^3)^2) - C_2(t),\\
\bm = \bb - \bh + \bh_a, \qquad \bb = \bcurl [\omega(x)\omega(y)\omega(z), \omega(x)\omega(y)\omega(z), \omega(x)\omega(y)\omega(z)], \qquad \omega(s) = s^2 (1 - s)^2,
\end{gather}
where $C_1(t)$ and $C_2(t)$ are such that $\int_\Omega p = \int_\Omega \phi = 0$ for each time $t \in [0,T]$.
We modify the momentum \eqref{eq:momentum} and magnetization equations \eqref{eq:magnetization} as in \cref{sec:conv-test-2D} in order to ensure that $(\bu,p,\bm,\phi)$ satisfy the resulting equations.

For spatial discretization we consider the second-order Guzmán--Neilan pair found in \cite[Section 6.2]{GuzmanNeilan2018} (called ``Guzman--Neilan 2nd kind $H^1$'' in \texttt{Firedrake}), which consists of continuous piecewise vector-valued polynomials of degree $\leq 2$ enriched with ``modified'' face bubbles on $\cT_h^\rmb$ for $\bu$, and piecewise discontinuous scalar-valued polynomials of degree $\leq 1$ on $\cT_h^\rmb$.
For the magnetization field $\bm$, we consider continuous piecewise vector-valued polynomials of degree $\leq 2$ on $\cT_h$.
Finally, the magnetic potential $\phi$ is discretized using the reduced Walkington macroelement space \cite{BrubeckGithub2026}.
Hence, $k_\bu = k_\bm = k_\phi = 2$.
We stress that this choice of exact solutions and finite element discretization is not strictly covered by our theory for the following two reasons:
\begin{itemize}
  \item $\bu$ does not vanish on $\partial\Omega$.
  Hence, repeating the integration by parts in \eqref{eq:key-magnetic-cancellation} in the context of the error analysis would introduce a boundary term $\int_{\partial\Omega} (\obU \cdot \nabla E_\phi)(\bE_\bm + \nabla E_\phi) \cdot \bnu$, which is not controllable by the discrete energy law.
  \item As we discussed in \cref{sec:u-p}, we are unaware of the existence of an appropriate auxiliary projection $\obU$ using the Guzmán--Neilan pair that satisfies the $\WW^{1,\infty}$-stability estimate \eqref{W1inf-stab-Stokes} in \cref{assump:W1infty-stability}.
\end{itemize}
As in \cref{sec:conv-test-2D} and in agreement with the discussion in \cref{sec:specific-FES}, we initialize the velocity, magnetization and magnetic potential fields by respectively using the discrete Leray projection \eqref{eq:disc-Leray} (in this case, with non-zero Dirichlet conditions to account for the non-vanishing trace of $\bu$), the $\bL^2$ projection and the Ritz projection with Neumann boundary conditions.
Our numerical experiments are summarized in \cref{tab:convergence_rates_3D}.
The $\LL^\infty(\bL^2)$ errors for $\bu$, $\bh$, and $\bm$ exhibit nearly third-order convergence along the refinement path $\Delta t=h^{3/2}/16$, while the Maxwell residual converges at approximately second order.
The convergence rate of the velocity error in the $\LL^2(\bH^1)$ seminorm remains close to $1.75$ over the reported refinements, which may be an artifact of the preasymptotic regime.

\begin{table}
    \setlength{\tabcolsep}{3pt}
    \resizebox{\textwidth}{!}{
\begin{tabular}{c c c c c c c c c c c c}
\toprule
DOFs & $h$ & $\|\be_\bu\|_{\LL^\infty(\bL^2)}$ & Rate & $\|\nabla\be_\bu\|_{\LL^2(\bbL^2)}$ & Rate & $\|\be_\bh\|_{\LL^\infty(\bL^2)}$ & Rate & $\|\be_\bm\|_{\LL^\infty(\bL^2)}$ & Rate & $\|\div(\be_\bm+\be_\bh)\|_{\LL^2(\LL^2)}$ & Rate\\
\midrule
   18392 & 0.6124 & 5.1163\texttt{E}-02 &   -- & 5.6253\texttt{E}-01 &   -- & 2.4637\texttt{E}-02 &   -- & 2.7468\texttt{E}-02 &   -- & 6.6716\texttt{E}-01 &   -- \\
  138528 & 0.3062 & 8.6851\texttt{E}-03 & 2.56 & 1.6680\texttt{E}-01 & 1.75 & 4.0077\texttt{E}-03 & 2.62 & 4.0954\texttt{E}-03 & 2.75 & 2.3536\texttt{E}-01 & 1.50 \\
 1077296 & 0.1531 & 1.1967\texttt{E}-03 & 2.86 & 5.0298\texttt{E}-02 & 1.73 & 5.3837\texttt{E}-04 & 2.90 & 5.6492\texttt{E}-04 & 2.86 & 6.3612\texttt{E}-02 & 1.89 \\
 8501328 & 0.0765 & 1.4565\texttt{E}-04 & 3.04 & 1.4930\texttt{E}-02 & 1.75 & 6.8908\texttt{E}-05 & 2.97 & 7.4518\texttt{E}-05 & 2.92 & 1.4971\texttt{E}-02 & 2.09 \\
\bottomrule
\end{tabular}
    }
    \centering
    \caption{\emph{Spatial convergence rates in 3D for $k_\bu = k_\bm = k_\phi = 2$}}
    \label{tab:convergence_rates_3D}
\end{table}

\section*{Acknowledgments}
We thank Mansur Shakipov (University of Houston) for his candid comments that helped improve the overall presentation of this paper.
We thank Pablo Brubeck (University of Oxford) for his implementation of the reduced Walkington element \cite{BrubeckGithub2026} in \texttt{Firedrake} upon the first author's request.

Both authors were partially supported by NSF grant DMS-2512392.

\section*{Declaration of Generative AI Use}
The decision to combine the magnetic reformulation with Maxwell-residual stabilization was the authors' idea.
The first author used ChatGPT (OpenAI), including the GPT-6 Astra and GPT-6.1 Sol models, to assist with mathematical exploration and algebraic calculations, drafting and revising proofs, literature searches, numerical code development and optimization, interpretation of computational results, and manuscript drafting and editing.
This assistance included an initial draft of the $\HH^2$-conforming projector construction and the development of estimates for the nonlinear magnetic relaxation.
These arguments were developed through an iterative process of critical examination and substantial revision.
A summary of earlier exploratory discussions with Gemini (Google) concerning nonlinear magnetic relaxation was also consulted.
The authors checked all mathematical content, references, computational implementations, and numerical results.
The authors assume responsibility for all content.

\crefalias{section}{appendix}
\appendix

\section{Proof of \texorpdfstring{\cref{lem:potential-interpolation}}{Lemma 5.1}}\label{app:proof-interp}

To prove \cref{lem:potential-interpolation}, we first establish local stability for $J_h$ uniformly over the given shape-regular mesh family.
The proof separates element size from shape, bounds the normalized nodal and dual functions, and then uses rescaling and polynomial reproduction.

\subsection{Local representation and proof strategy}\label{app:local-representation}
For each $T\in\cT_h$, set $h_T:=\operatorname{diam}(T)$ and let $\omega_T$ be the patch consisting of the union of all elements sharing a vertex with $T$.
Because of the shape-regularity of $\cT_h$ (which implies local quasi-uniformity), these patches have uniformly bounded overlap and the element diameters within each patch are uniformly comparable.

Here and below, $\nabla^j$ denotes the $j$-fold weak gradient, with $\nabla^0w:=w$ and $\nabla^2w$ the Hessian.
We will prove that, for every $v\in\LL^p(\Omega)$,
\begin{equation}\label{eq:potential-local-stability}
\|\nabla^jJ_hv\|_{0,p;T}
\leq Ch_T^{-j}\|v\|_{0,p;\omega_T},
\qquad 0\leq j\leq2,\quad p\in[1,\infty].
\end{equation}
The lemma will then follow from combining this estimate with polynomial reproduction and approximation properties on the patches $\omega_T$, in the spirit of Bramble--Hilbert.

For each $T\in\cT_h$, let $A(T)$ be the set of indices of the degrees of freedom associated with $S(T)$.
We denote by $\theta_{a,T}\in S(T)$ the local nodal basis function determined by
\begin{equation*}
\lambda_b(\theta_{a,T})=\delta_{ab},
\qquad a,b\in A(T).
\end{equation*}
The defining identity \eqref{eq:local-dual-def} for the local dual function $\psi_a\in S(T_a)$ implies
\begin{equation*}
\int_{T_a}\theta_{b,T_a}\psi_a=\delta_{ab},
\qquad b\in A(T_a).
\end{equation*}
In particular, \eqref{eq:potential-averaged-dofs} and the expansion of $(J_hv)|_T$ in the basis $\{\theta_{a,T}\}_{a\in A(T)}$ give
\begin{equation}\label{eq:potential-local-expansion}
(J_hv)|_T
=\sum_{a\in A(T)}\lambda_a(Q_{T_a}v)\theta_{a,T}
=\sum_{a\in A(T)}\left(\int_{T_a}v\psi_a\right)\theta_{a,T}.
\end{equation}
\subsection{Splitting the contributions of element size and shape}\label{app:normalization}
To bound each of the factors in \eqref{eq:potential-local-expansion}, we separate the effects of element size and shape.
The map $\bF_T$ is a translation and uniform dilation preserving the macroelement space, whereas $\bG_T$ will provide fixed polynomial coordinates for the varying normalized shapes.
Choose a vertex $\bx_T$ of each macroelement $T\in\cT_h$ and define
\begin{equation}\label{eq:potential-normalization-map}
T^\ast:=h_T^{-1}(T-\bx_T),
\qquad
\bF_T(\bxi):=\bx_T+h_T\bxi.
\end{equation}
Thus $\bF_T$ maps the unit-diameter element $T^\ast$ onto $T$.
For each chosen family, composition with $\bF_T$ maps $S(T)$ onto $S(T^\ast)$; in the reduced Walkington case, this follows from \eqref{eq:potential-reduced-similarity}.
Let $\mu_a\in\{0,1,2\}$ denote the derivative order of $\lambda_a$, that is, $\mu_a=0$ for function values, $\mu_a=1$ for first derivatives, and $\mu_a=2$ for second derivatives.
We define the corresponding \emph{normalized nodal} functions $\theta^\ast_{a,T}$ on $T^\ast$ and \emph{normalized dual} functions $\psi^\ast_a$ on $T_a^\ast$ by
\begin{equation}\label{eq:potential-normalized-functions}
\theta^\ast_{a,T} :=h_T^{-\mu_a}\theta_{a,T}\circ \bF_T, \qquad \psi^\ast_a :=h_{T_a}^{d+\mu_a}\psi_a\circ \bF_{T_a}.
\end{equation}
Since $\bF_T$ is a translation and a uniform dilation, the factor $h_T^{-\mu_a}$ makes $\theta^\ast_{a,T}$ the nodal function for the corresponding derivative-value degree of freedom on $T^\ast$, with normal orientations inherited from $T$.

For $b\in A(T)$, define the normalized degree of freedom $\lambda_{b,T}^\ast$ on $T^\ast$ by
\begin{equation}\label{eq:potential-normalized-dofs}
\lambda_{b,T}^\ast(w^\ast)
:=h_T^{\mu_b}\lambda_b(w^\ast\circ\bF_T^{-1}), \qquad \forall w^* \in S(T^*).
\end{equation}
The factor $h_T^{\mu_b}$ cancels the derivative scaling, so $\lambda_{b,T}^\ast$ is the corresponding value or derivative evaluation in the coordinates of $T^\ast$.
Together with \eqref{eq:potential-normalized-functions}, definition \eqref{eq:potential-normalized-dofs} gives $\lambda_{b,T}^\ast(\theta_{a,T}^\ast)=\delta_{ab}$.

The normalization \eqref{eq:potential-normalized-functions} gives
\begin{equation}\label{eq:potential-normalized-duality}
\begin{aligned}
\int_{T_a^\ast}\theta^\ast_{b,T_a}\psi^\ast_a
&=h_{T_a}^{d+\mu_a-\mu_b}\frac{|T_a^\ast|}{|T_a|}\int_{T_a}\theta_{b,T_a}\psi_a\\
&=h_{T_a}^{\mu_a-\mu_b}\int_{T_a}\theta_{b,T_a}\psi_a\\
&=h_{T_a}^{\mu_a-\mu_b}\delta_{ab}=\delta_{ab},
\qquad b\in A(T_a).
\end{aligned}
\end{equation}
The domains $T^\ast$ still vary with $T$.
To control this shape dependence, we represent them by a compact family of nondegenerate matrices.
For any simplex $K$, let $\rho(K)$ denote its inradius, namely the radius of its largest inscribed ball.
Every normalized macroelement $T^\ast$ has diameter one and ordered vertices $\bv_0^\ast=\mathbf 0,\bv_1^\ast,\ldots,\bv_d^\ast$, and shape regularity gives $\rho(T^\ast)=\rho(T)/h_T\geq\rho_0$ for some $\rho_0>0$ independent of $h$ and $T$.
Let $\widehat T$ be the fixed reference simplex with vertices $\widehat\bv_0=\mathbf 0$ and $\widehat\bv_i=\be_i$, $i\in\{1,\dotsc,d\}$.
Denote by $\bG_T:\widehat T\to T^\ast$ the affine map that sends $\widehat\bv_i$ to $\bv_i^\ast$.
Since $\widehat\bv_0=\bv_0^\ast=\mathbf 0$, $\bG_T(\widehat\bx)=\bB_T\widehat\bx$, where the columns of $\bB_T$ are $\bv_1^\ast,\dotsc,\bv_d^\ast$.
Applying \cite[Theorem~15.2]{CiarletHandbookError1991} to $\bG_T$ yields
\begin{equation*}
\|\bB_T\|\leq\frac{1}{2\rho(\widehat T)},
\qquad
\|\bB_T^{-1}\|\leq\frac{\operatorname{diam}(\widehat T)}{2\rho(T^\ast)}
\leq\frac{\operatorname{diam}(\widehat T)}{2\rho_0}.
\end{equation*}
Since the quantities on the right-hand side are independent of $T$ and $h$, we can thus write
\begin{equation}\label{eq:potential-normalized-affine-bounds}
\|\bB_T\|+\|\bB_T^{-1}\|\leq C,
\end{equation}
%
In particular, $\bG_T$ and $\bG_T^{-1}$ are uniformly bounded in $\WW^{1,\infty}$ on their respective domains.
Thus $\bG_T$ maps the fixed reference simplex onto the unit-diameter simplex $T^\ast$, whose shape is determined by $\bB_T$, and $\bF_T$ then restores the size and position of $T$.

Let $\widehat b$ and $b_T^\ast$ be the barycenters of $\widehat T$ and $T^\ast$, respectively.
Since affine maps preserve averages of points,
\begin{equation*}
\bG_T(\widehat b)
=\bG_T\left(\frac{1}{d+1}\sum_{i=0}^d\widehat \bv_i\right)
=\frac{1}{d+1}\sum_{i=0}^d \bv_i^\ast
=b_T^\ast.
\end{equation*}

Moreover, $\bG_T$ maps each face $\widehat f_i$ of $\widehat T$ onto the corresponding face $f_i^\ast$ of $T^\ast$.
Thus it maps each of the $d+1$ subsimplices of the reference Alfeld split onto its counterpart in $T^\ast$, since
\begin{equation*}
\bG_T\left(\operatorname{conv}(\{\widehat b\}\cup\widehat f_i)\right)
=\operatorname{conv}(\{b_T^\ast\}\cup f_i^\ast).
\end{equation*}
This identifies the three subtriangles when $d=2$, or the four subtetrahedra when $d=3$, with fixed reference subsimplices.

Estimate \eqref{eq:potential-normalized-affine-bounds} also supplies the compactness needed for the uniform bounds on $\{\theta^\ast_{a,T}\}_{a\in A(T)}$.
Define
\begin{equation}\label{eq:potential-compact-shapes}
\mathfrak B:=\overline{\{\bB_T:T\in\cT_h,\ h>0\}}\subset\RR^{d\times d};
\end{equation}
that is, $\mathfrak B$ is the closure of the collection of all matrices $\bB_T$, where $T$ ranges over all elements of all meshes in $\{\cT_h\}_{h>0}$.
Estimate \eqref{eq:potential-normalized-affine-bounds} readily implies that $\mathfrak B$ is compact and consists of invertible matrices.
The uniform bound on $\bB_T^{-1}$ excludes singular limit matrices, so every shape in $\mathfrak B$ remains nondegenerate.

\subsection{Uniform bounds on normalized elements}\label{app:normalized-bounds}
We prove continuous dependence of the normalized nodal functions on $\bB_T\in\mathfrak B$ through their coefficients in fixed reference coordinates.
Compactness then yields uniform bounds for these functions and their inverse mass matrices, which determine the dual functions.

For the following argument on dependence on shape, we choose auxiliary normal orientations that vary continuously with the ordered vertices.
Reversing a normal changes the sign of each affected degree of freedom and its associated nodal function, by unisolvence; the corresponding dual function also changes sign by \eqref{eq:local-dual-def}.
These sign changes preserve all the norms considered below.
We use the auxiliary orientations throughout the continuity argument, and the resulting uniform bounds also hold for the fixed global orientations.

\smallskip
\noindent\textbf{(i) Fixed polynomial coordinates.}
Let $\widehat K_i:=\operatorname{conv}(\{\widehat b\}\cup\widehat f_i)$, $0\leq i\leq d$, be the subsimplices of the reference element $\hat T$.
We use $\ell$ for a fixed upper bound on the total polynomial degree on each subsimplex $K$ in the Alfeld split of $T$, so that
\begin{equation*}
w|_K\in\sP_\ell(K),\qquad\forall w\in S(T).
\end{equation*}
The bound $\ell$ is independent of $h$ and $T$ and is distinct from the reproduction degree $r$ in \eqref{eq:potential-reproduction-degree}, which concerns the inclusion $\sP_r(T)\subseteq S(T)$.
Set $n:=\dim\sP_\ell(\widehat K_i)$, which is independent of $i$.
Choose a basis $\{\widehat p_{i,j}\}_{j=1}^n$ of $\sP_\ell(\widehat K_i)$ on each subsimplex and write
\begin{equation}\label{eq:polynomial-expansion}
(\theta^\ast_{a,T}\circ\bG_T)|_{\widehat K_i} =\sum_{j=1}^n c_{a,i,j,T} \, \widehat p_{i,j},
\qquad 0\leq i\leq d,
\end{equation}
The vector $\bc_{a,T}\in\RR^N$, $N:=(d+1)n$, collects the coefficients of $\theta^\ast_{a,T}\circ\bG_T$ on all reference subsimplices, in the fixed ordering
\begin{equation}\label{eq:potential-coefficient-ordering}
(\bc_{a,T})_{in+j}:=c_{a,i,j,T},
\qquad 0\leq i\leq d,\quad 1\leq j\leq n.
\end{equation}

\smallskip
\noindent\textbf{(ii) Equations characterizing the nodal functions.}
Membership constraints and prescribed nodal values give a linear system for $\bc_{a,T}$.
We describe its equations to justify their continuous dependence on the element geometry.
On each subsimplex $K_i^\ast:=\bG_T(\widehat K_i)$, choose an arbitrary polynomial $w_i\in \sP_\ell(K_i^\ast)$.
Let $w$ denote the piecewise polynomial function whose restriction to each $K_i^\ast$ is $w_i$.
For these pieces to define a $C^1$ function, their values and gradients must agree across each internal face $F = K_i^\ast \cap K_k^\ast$:
\begin{equation*}
(w_i-w_k)|_F=0,
\qquad
(\nabla w_i-\nabla w_k)|_F=\mathbf 0.
\end{equation*}
The collection of these polynomial identities is encoded by a matrix $\mathsf C_{\mathrm{cont},T}$.
A second matrix, $\mathsf C_{\mathrm{extra},T}$, encodes the remaining restrictions defining the chosen macroelement space $S(T^\ast)$.
For instance, the normal derivative of a cubic HCT piece is a quadratic polynomial along an outer edge $e$; the reduced HCT condition $\partial_{\bn_e}w|_e\in\sP_1(e)$ therefore requires its degree-two term to vanish.
For the full Walkington element, which has 45 degrees of freedom, the restrictions include matching derivatives of $w_i$ through order four at the barycenter $b_T^\ast$ and requiring $\partial_{\bn_f}w|_f\in\sP_3(f)$ on each outer face $f$; see \cite[Notation 2.1]{Walkington2014}.
For a reduced Walkington space, $\mathsf C_{\mathrm{extra},T}$ encodes the defining relations of the fixed reduction \cite{BrubeckLianParker2026,BrubeckGithub2026}.
These need not retain Walkington's $C^4$ condition at the barycenter; for example, the construction in \cite[eq.~(4.3)]{BrubeckLianParker2026} uses a variational condition in the interior.
These conditions are homogeneous linear relations between the entries of the coefficient vector $\bc\in\RR^N$; the values of $w$ and its normal derivatives need not vanish.
The admissible coefficient vectors are precisely those satisfying
\begin{equation}\label{eq:potential-macroelement-constraints}
\mathsf C_{\mathrm{cont},T}\bc=\mathbf 0,
\qquad
\mathsf C_{\mathrm{extra},T}\bc=\mathbf 0.
\end{equation}

The homogeneous constraints characterize the macroelement space; to select its nodal functions, fix a local ordering $A(T)=\{a_1,\ldots,a_m\}$ of the degree-of-freedom indices, where $m:=|A(T)|$.
Let $w^\ast_{\bc}$ be the piecewise polynomial function with coefficient vector $\bc\in\RR^N$ in the expansion \eqref{eq:polynomial-expansion}, ordered as in \eqref{eq:potential-coefficient-ordering}.
Let $\mathsf D_T\in\RR^{m\times N}$ map this coefficient vector to the ordered normalized degree-of-freedom values, that is:
\begin{equation}\label{eq:potential-nodal-matrix}
(\mathsf D_T\bc)_q
:=\lambda_{a_q,T}^\ast(w^\ast_{\bc}),
\qquad 1\leq q\leq m.
\end{equation}
For an arbitrary coefficient vector $\bc\in\RR^N$, each evaluation $\lambda_{a_q,T}^\ast(w^\ast_{\bc})$ in \eqref{eq:potential-nodal-matrix} is taken from one fixed incident subsimplex.
If $\bc$ satisfies \eqref{eq:potential-macroelement-constraints}, then $w^\ast_{\bc}\in S(T^\ast)$, where the degrees of freedom $\lambda_{b,T}^\ast$ are single-valued.
For such vectors, the choice of incident subsimplex therefore does not affect $\mathsf D_T\bc$.
The matrix $\mathsf D_T$ has one row per local degree of freedom and one column per polynomial coefficient.
The coefficient vector $\bc_{a,T}$ of $\theta^\ast_{a,T}$ solves the consistent system
\begin{equation}\label{eq:potential-nodal-system}
\underbrace{\begin{pmatrix}
\mathsf C_{\mathrm{cont},T}\\
\mathsf C_{\mathrm{extra},T}\\
\mathsf D_T
\end{pmatrix}}_{\mathsf A_T}
\bc_{a,T}
=\underbrace{\begin{pmatrix}\mathbf 0\\\mathbf 0\\\be_a\end{pmatrix}}_{\bb_a},
\qquad |\bb_a|=1,
\end{equation}
where $\be_a\in\RR^m$ has entry $1$ in the unique position $q$ such that $a_q=a$ and zero entries elsewhere.
In \eqref{eq:potential-nodal-system}, the blocks $\mathsf C_{\mathrm{cont},T}$ and $\mathsf C_{\mathrm{extra},T}$ impose \eqref{eq:potential-macroelement-constraints}.
The block $\mathsf D_T$ defined in \eqref{eq:potential-nodal-matrix} prescribes value $1$ for $\lambda_{a,T}^\ast$ and $0$ for $\lambda_{b,T}^\ast$, $b\in A(T)\setminus\{a\}$.
The three blocks of $\mathsf A_T$ are constructed on the normalized element $T^\ast$.
The entries of $\mathsf C_{\mathrm{cont},T}$ and $\mathsf D_T$ depend continuously on $\bB_T$, since they involve only polynomial operations, $\bB_T^{-1}$, barycenters, and continuously oriented unit normals.
The entries of $\mathsf C_{\mathrm{extra},T}$ are continuous for the HCT and full Walkington spaces by their polynomial trace and barycenter constraints.
For a reduced Walkington space, this continuity is one of the requirements imposed in \cref{sec:magnetic-potential}.

\smallskip
\noindent\textbf{(iii) Uniform bounds and continuous dependence.}
For every matrix $\bB$ in the compact set $\mathfrak B$ defined in \eqref{eq:potential-compact-shapes}, the nondegenerate normalized simplex $K_{\bB}:=\bB\widehat T$ admits the construction \eqref{eq:potential-nodal-system}.
The resulting coefficient matrix $\mathsf A(\bB)$ defines a continuous matrix-valued map $\mathsf A$ on $\mathfrak B$ such that $\mathsf A_T=\mathsf A(\bB_T)$.
For every $\bB \in \mathfrak B$, the local degrees of freedom are unisolvent on $S(K_{\bB})$ since $K_{\bB}$ is nondegenerate.
Consequently,
\begin{equation*}
\ker\mathsf A(\bB)=\{\mathbf 0\},
\qquad
\sigma_{\min}(\mathsf A(\bB))
:=\min_{|\bc|=1}|\mathsf A(\bB)\bc|>0,
\qquad \bB\in\mathfrak B.
\end{equation*}
Thus $\mathsf A(\bB)$ has full column rank, even when redundant constraints are retained.
Continuity of the smallest singular value \cite[Theorem~1]{Stewart1990} and compactness of $\mathfrak B$, together with \eqref{eq:potential-nodal-system}, yield
\begin{equation}\label{eq:potential-coefficient-bound}
\sigma_0:=\min_{\bB\in\mathfrak B}\sigma_{\min}(\mathsf A(\bB))>0,
\qquad
|\bc_{a,T}|
\leq\sigma_0^{-1}|\mathsf A_T\bc_{a,T}|
=\sigma_0^{-1}.
\end{equation}
Moreover, since $\mathsf A_T$ has full column rank, \eqref{eq:potential-nodal-system} gives
\begin{equation*}
\bc_{a,T}
=\mathsf A_T^\dagger\bb_a
=\bigl(\mathsf A_T^\top\mathsf A_T\bigr)^{-1}
\mathsf A_T^\top\bb_a,
\end{equation*}
where $\mathsf A_T^\dagger$ denotes the Moore--Penrose inverse.
The map $\mathsf A$ has constant rank $N$ on $\mathfrak B$, so continuity of the Moore--Penrose inverse \cite[Theorem~4.2]{Rakocevic1997} shows that $\bc_{a,T}$ depends continuously on $\bB_T$, and hence on shape.

For Walkington's element, this dependence is also established explicitly in \cite[Section 3.3]{Walkington2014}, and the same compactness argument is used in the proof of \cite[Theorem 2.5]{Walkington2014}.
Combining \eqref{eq:potential-coefficient-bound} with finite-dimensional polynomial norm equivalence on the fixed reference subdivision $\{\widehat K_i\}_{i=0}^d$, \eqref{eq:potential-normalized-affine-bounds}, and the affine transformation estimates of \cite[Theorem 15.1]{CiarletHandbookError1991}, valid for $1\leq p\leq\infty$, gives
\begin{equation}\label{eq:potential-normalized-nodal-bound}
\|\nabla^j\theta^\ast_{a,T}\|_{0,p;T^\ast}\leq C,
\qquad a\in A(T),\quad 0\leq j\leq2,\quad 1\leq p\leq\infty.
\end{equation}

\smallskip
\noindent\textbf{(iv) Dual functions.}
The dual functions are obtained from the nodal functions through the inverse mass matrix.
Define the normalized mass matrix $M^\ast_T$ by
\begin{equation}\label{eq:potential-normalized-mass-matrix}
M^\ast_T
:=\left(\int_{T^\ast}\theta^\ast_{b,T}\theta^\ast_{c,T}\right)_{b,c\in A(T)}.
\end{equation}
Its entries depend continuously on $\bB_T$, as follows by changing variables to $\widehat T$ and using the continuous dependence of $\bc_{a,T}$ on $\bB_T$, namely
\begin{equation*}
(M^\ast_T)_{bc}
= |\det \bB_T| \int_{\widehat T} (\theta^\ast_{b,T} \circ \bG_T)(\theta^\ast_{c,T} \circ \bG_T)
= |\det \bB_T| \sum_{i=0}^d \sum_{j,k=1}^n c_{b,i,j,T} c_{c,i,k,T} \int_{\widehat K_i} \widehat p_{i,j} \widehat p_{i,k}.
\end{equation*}
Moreover, $M^\ast_T$ is positive definite for every admissible shape because $\{\theta^\ast_{a,T}\}_{a\in A(T)}$ is linearly independent.
Compactness of $\mathfrak B$ therefore gives a positive lower bound, uniform in $h$ and $T$, for the smallest eigenvalue of $M^\ast_T$, and hence a uniform bound for $(M^\ast_T)^{-1}$.
Recalling the definition \eqref{eq:local-dual-def} of the dual functions, the normalized duality identity \eqref{eq:potential-normalized-duality} and the mass matrix $M^\ast_{T_a}$ defined by \eqref{eq:potential-normalized-mass-matrix} yield
\begin{equation}\label{eq:potential-normalized-dual-expansion}
\psi^\ast_a
=\sum_{b\in A(T_a)}\bigl((M^\ast_{T_a})^{-1}\bigr)_{ba}\theta^\ast_{b,T_a}.
\end{equation}
The representation \eqref{eq:potential-normalized-dual-expansion}, the uniform bound for $(M^\ast_{T_a})^{-1}$, the fixed number $|A(T_a)|$ of local degrees of freedom, and \eqref{eq:potential-normalized-nodal-bound} imply
\begin{equation}\label{eq:potential-normalized-dual-bound}
\|\psi^\ast_a\|_{0,p;T_a^\ast}\leq C,
\qquad 1\leq p\leq\infty.
\end{equation}

\subsection{Rescaling and local stability}\label{app:rescaling}
Returning to the physical elements, we use \eqref{eq:potential-normalized-nodal-bound} and \eqref{eq:potential-normalized-dual-bound} to prove \eqref{eq:potential-local-stability}.
The powers associated with the derivative order $\mu_a$ cancel between each nodal function and its averaged coefficient.

Let $p,p'\in[1,\infty]$ satisfy $1/p+1/p'=1$, so that $p'=\infty$ when $p=1$ and $p'=1$ when $p=\infty$.
We use the convention $1/\infty:=0$.
Rearranging the definitions \eqref{eq:potential-normalized-functions} of $\theta^\ast_{a,T}$ and $\psi^\ast_a$, and applying the chain rule with $\nabla\bF_T=h_T\bI$ gives
\begin{equation}\label{eq:potential-physical-function-scaling}
\begin{aligned}
\psi_a\circ\bF_{T_a}
&=h_{T_a}^{-d-\mu_a}\psi^\ast_a,\\
(\nabla^j\theta_{a,T})\circ\bF_T
&=h_T^{\mu_a-j}\nabla^j\theta^\ast_{a,T},
\qquad 0\leq j\leq2.
\end{aligned}
\end{equation}
Since $|\det\nabla\bF_T|=h_T^d$, the change of variables $\bx=\bF_T(\bxi)$ yields, for $w\in\LL^q(T^\ast)$,
\begin{equation}\label{eq:potential-dilation-norm}
\|w\circ\bF_T^{-1}\|_{0,q;T}
=h_T^{d/q}\|w\|_{0,q;T^\ast},
\qquad 1\leq q\leq\infty.
\end{equation}
%
The identity naturally extends to vector- and tensor-valued functions.
Applying \eqref{eq:potential-dilation-norm} to \eqref{eq:potential-physical-function-scaling}, using $-d+d/p'=-d/p$ and the uniform bounds \eqref{eq:potential-normalized-nodal-bound} and \eqref{eq:potential-normalized-dual-bound}, gives
\begin{equation}\label{eq:potential-physical-basis-bounds}
\begin{aligned}
\|\psi_a\|_{0,p';T_a}
&=h_{T_a}^{-d-\mu_a+d/p'}\|\psi^\ast_a\|_{0,p';T_a^\ast}
\leq Ch_{T_a}^{-\mu_a-d/p},\\
\|\nabla^j\theta_{a,T}\|_{0,p;T}
&=h_T^{\mu_a-j+d/p}\|\nabla^j\theta^\ast_{a,T}\|_{0,p;T^\ast}
\leq Ch_T^{\mu_a-j+d/p},
\qquad 0\leq j\leq2.
\end{aligned}
\end{equation}
For $v\in\LL^p(T_a)$, the identity \eqref{eq:potential-averaged-dofs}, H\"older's inequality with exponents $p,p'$, and the first bound in \eqref{eq:potential-physical-basis-bounds} yield
\begin{equation}\label{eq:potential-averaged-coefficient-bound}
|\lambda_a(Q_{T_a}v)|
= \left|\int_{T_a}v\psi_a\right|
\leq\|v\|_{0,p;T_a}\|\psi_a\|_{0,p';T_a}
\leq Ch_{T_a}^{-\mu_a-d/p}\|v\|_{0,p;T_a}.
\end{equation}
For every $a\in A(T)$, we have $T_a\subset\omega_T$ and $h_{T_a}\simeq h_T$.
Inserting \eqref{eq:potential-physical-basis-bounds} and \eqref{eq:potential-averaged-coefficient-bound} into the local expansion \eqref{eq:potential-local-expansion} of $(J_hv)|_T$, and using the uniform bound on $m=|A(T)|$, proves the local stability estimate \eqref{eq:potential-local-stability}.

\subsection{Approximation and Sobolev stability}\label{app:approximation}
Subtracting one polynomial on $\omega_T$ accounts for all incident elements $T_a$ in \eqref{eq:potential-local-expansion} and allows us to use polynomial reproduction.

Let $p\in[1,\infty]$ and let $s$ be an integer with $2\leq s\leq r+1$.
The results about polynomial approximation and Poincaré inequalities on patches found in \cite[Lemmas 5.6 and 5.7]{ErnGuermond2017} provide, for every $v\in\WW^{s,p}(\omega_T)$, a single polynomial $q\in\sP_{s-1}(\RR^d)$ such that
\begin{equation}\label{eq:potential-patch-polynomial}
\sum_{j=0}^{2}h_T^j\|\nabla^j(v-q)\|_{0,p;\omega_T}
\leq Ch_T^s|v|_{\WW^{s,p}(\omega_T)};
\end{equation}
the constant $C>0$ is independent of $h$ and $T$ due to shape-regularity (and local quasi-uniformity).
Since $s-1\leq r$, every local projection $Q_{T_a}$ occurring in \eqref{eq:potential-local-expansion} reproduces $q$, and hence $(J_hq)|_T=q|_T$.
This reproduction property and estimates \eqref{eq:potential-local-stability} and \eqref{eq:potential-patch-polynomial} yield
\begin{equation}\label{eq:potential-local-approximation}
\begin{aligned}
\|\nabla^j(v-J_hv)\|_{0,p;T}
&\leq\|\nabla^j(v-q)\|_{0,p;T}
+Ch_T^{-j}\|v-q\|_{0,p;\omega_T}\\
&\leq Ch_T^{s-j}|v|_{\WW^{s,p}(\omega_T)},
\qquad 0\leq j\leq2,\quad 2\leq s\leq r+1.
\end{aligned}
\end{equation}
%
For $1\leq p<\infty$, summing the $p$th powers of \eqref{eq:potential-local-approximation} over $T\in\cT_h$, using the bounded overlap of the patches, and taking the $p$th root gives the global bounds for $\nabla^j(v-J_hv)$.
For $p=\infty$, these bounds follow by taking the supremum over $T\in\cT_h$.
Combining the bounds for the derivative orders up to $j$ and using $h\leq1$ proves \eqref{eq:potential-averaged-approximation}.
Taking $s=2$ and $j=2$ in \eqref{eq:potential-averaged-approximation}, and using the triangle inequality, gives
\begin{equation*}
\|J_hv\|_{2,p}\leq C\|v\|_{2,p},
\qquad 1\leq p\leq\infty.
\end{equation*}
This proves \eqref{eq:potential-averaged-stability} and completes the proof.

\bibliographystyle{abbrv}
\bibliography{references}

@article {DipernaLions1989,
    AUTHOR = {DiPerna, R. J. and Lions, P.-L.},
     TITLE = {Ordinary differential equations, transport theory and
              {S}obolev spaces},
   JOURNAL = {Invent. Math.},
  FJOURNAL = {Inventiones Mathematicae},
    VOLUME = {98},
      YEAR = {1989},
    NUMBER = {3},
     PAGES = {511--547},
      ISSN = {0020-9910,1432-1297},
   MRCLASS = {34A10 (34D20 35Q20 58D25 82A70)},
  MRNUMBER = {1022305},
MRREVIEWER = {B.\ G.\ Pachpatte},
       DOI = {10.1007/BF01393835},
       URL = {https://doi.org/10.1007/BF01393835},
}

@article {AmiratHamdache2009,
    AUTHOR = {Amirat, Youcef and Hamdache, Kamel},
     TITLE = {Strong solutions to the equations of a ferrofluid flow model},
   JOURNAL = {J. Math. Anal. Appl.},
  FJOURNAL = {Journal of Mathematical Analysis and Applications},
    VOLUME = {353},
      YEAR = {2009},
    NUMBER = {1},
     PAGES = {271--294},
      ISSN = {0022-247X,1096-0813},
   MRCLASS = {35Q35 (35D35 76D05)},
  MRNUMBER = {2508866},
       DOI = {10.1016/j.jmaa.2008.11.084},
}

@article {AmiratHamdache2010,
    AUTHOR = {Amirat, Youcef and Hamdache, Kamel},
     TITLE = {Unique solvability of equations of motion for ferrofluids},
   JOURNAL = {Nonlinear Anal.},
  FJOURNAL = {Nonlinear Analysis. Theory, Methods \& Applications. An
              International Multidisciplinary Journal},
    VOLUME = {73},
      YEAR = {2010},
    NUMBER = {2},
     PAGES = {471--494},
      ISSN = {0362-546X,1873-5215},
   MRCLASS = {35Q35 (76D05)},
  MRNUMBER = {2650831},
       DOI = {10.1016/j.na.2010.03.042},
}

@article {AmiratHamdache2008,
    AUTHOR = {Amirat, Youcef and Hamdache, Kamel},
     TITLE = {Global weak solutions to a ferrofluid flow model},
   JOURNAL = {Math. Methods Appl. Sci.},
  FJOURNAL = {Mathematical Methods in the Applied Sciences},
    VOLUME = {31},
      YEAR = {2008},
    NUMBER = {2},
     PAGES = {123--151},
      ISSN = {0170-4214,1099-1476},
   MRCLASS = {76W05},
  MRNUMBER = {2375760},
       DOI = {10.1002/mma.896},
}

@incollection{Shliomis2002,
author="Shliomis, Mark I.",
editor="Odenbach, Stefan",
title="Ferrohydrodynamics: Retrospective and Issues",
bookTitle="Ferrofluids: Magnetically Controllable Fluids and Their Applications",
year="2002",
publisher="Springer Berlin Heidelberg",
address="Berlin, Heidelberg",
pages="85--111",
isbn="978-3-540-45646-9",
doi="10.1007/3-540-45646-5_5",
url="https://doi.org/10.1007/3-540-45646-5_5"
}

@article{Shliomis1972,
  author   = {Shliomis, M. I.},
  title    = {Effective Viscosity of Magnetic Suspensions},
  journal  = {Journal of Experimental and Theoretical Physics},
  volume   = {34},
  number   = {6},
  pages    = {1291--1294},
  year     = {1972}
}

@article{SchererFiguereido2005,
  title = {Ferrofluids: properties and applications},
  volume = {35},
  issn = {0103-9733},
  shorttitle = {Ferrofluids},
  doi = {10.1590/S0103-97332005000400018},
  language = {en},
  number = {3a},
  journal = {Brazilian Journal of Physics},
  author = {Scherer, C. and Figueiredo Neto, A. M.},
  month = sep,
  year = {2005},
  pages = {718--727},
}

@article{SocoliucEtAl2022,
  title = {Ferrofluids and bio-ferrofluids: looking back and stepping forward},
  volume = {14},
  issn = {2040-3364, 2040-3372},
  shorttitle = {Ferrofluids and bio-ferrofluids},
  url = {https://xlink.rsc.org/?DOI=D1NR05841J},
  doi = {10.1039/D1NR05841J},
  language = {en},
  number = {13},
  urldate = {2026-05-27},
  journal = {Nanoscale},
  author = {Socoliuc, V. and Avdeev, M. V. and Kuncser, V. and Turcu, Rodica and Tombácz, Etelka and Vékás, L.},
  year = {2022},
  pages = {4786--4886},
}

@article{ferrofluids-review,
title = {Engineering applications of ferrofluids: A review},
fjournal = {Journal of Magnetism and Magnetic Materials},
journal = {J. Magn. Magn. Mater.},
volume = {537},
pages = {168222},
year = {2021},
issn = {0304-8853},
doi = {https://doi.org/10.1016/j.jmmm.2021.168222},
url = {https://www.sciencedirect.com/science/article/pii/S0304885321004984},
author = {Madhusree Kole and Sameer Khandekar},
}

@book{Rosensweig2013,
  title={Ferrohydrodynamics},
  author={Rosensweig, R.E.},
  isbn={9780486783000},
  series={Dover Books on Physics},
  year={2013},
  publisher={Dover Publications}
}

@ARTICLE{Eickmann2026,
       author = {{E}ickmann, Franziska and {G}uzm{\'a}n, Johnny and {N}eilan, Michael and Scott, L. Ridgway and {T}scherpel, Tabea},
        title = "{A local Fortin projection for the Scott-Vogelius elements on general meshes}",
      journal = {arXiv e-prints},
         year = 2025,
        month = dec,
          eid = {arXiv:2512.18033v2},
        pages = {arXiv:2512.18033v2},
          doi = {10.48550/arXiv.2512.18033},
archivePrefix = {arXiv},
       eprint = {2512.18033v2},
 primaryClass = {cs.NA},
}

@Misc{GuzmanNeilan2026private,
  author       = {Johnny Guzmán and Michael Neilan},
  howpublished = {{Private Communication}},
  year         = {2026},
  month = {June},
}

@article{irksome,
author = {Farrell, Patrick E. and Kirby, Robert C. and Marchena-Men{\'e}ndez, Jorge},
title = {Irksome: {A}utomating {R}unge–{K}utta {T}ime-stepping for {F}inite {E}lement {M}ethods},
year = {2021},
publisher = {Association for Computing Machinery},
address = {New York, NY, USA},
volume = {47},
number = {4},
issn = {0098-3500},
url = {https://doi.org/10.1145/3466168},
doi = {10.1145/3466168},
journal = {ACM Trans. Math. Softw.},
note = {Article 30},
pages = {1--26},
}

@manual{firedrake,
  title        = {Firedrake User Manual},
  author       = {David A. Ham and Paul H. J. Kelly and Lawrence Mitchell and Colin J. Cotter and Robert C. Kirby and Koki Sagiyama and Nacime Bouziani and Sophia Vorderwuelbecke and Thomas J. Gregory and Jack Betteridge and Daniel R. Shapero and Reuben W. Nixon-Hill and Connor J. Ward and Patrick E. Farrell and Pablo D. Brubeck and India Marsden and Thomas H. Gibson and Miklós Homolya and Tianjiao Sun and Andrew T. T. McRae and Fabio Luporini and Alastair Gregory and Michael Lange and Simon W. Funke and Florian Rathgeber and Gheorghe-Teodor Bercea and Graham R. Markall},
  organization = {Imperial College London and University of Oxford and Baylor University and University of Washington},
  edition      = {First},
  year         = {2023},
  month        = may,
  doi          = {10.25561/104839},
}

@book {GiraultRaviart1986,
    AUTHOR = {Girault, Vivette and Raviart, Pierre-Arnaud},
     TITLE = {Finite element methods for {N}avier-{S}tokes equations},
    SERIES = {Springer Series in Computational Mathematics},
    VOLUME = {5},
      NOTE = {Theory and algorithms},
 PUBLISHER = {Springer-Verlag, Berlin},
      YEAR = {1986},
     PAGES = {x+374},
      ISBN = {3-540-15796-4},
   MRCLASS = {65N30 (65-02 76-08)},
  MRNUMBER = {851383},
MRREVIEWER = {Max\ D.\ Gunzburger},
       DOI = {10.1007/978-3-642-61623-5},
}

@article {NochettoTrivisaWeber2019,
    AUTHOR = {Nochetto, Ricardo H. and Trivisa, Konstantina and Weber,
              Franziska},
     TITLE = {On the dynamics of ferrofluids: global weak solutions to the
              {R}osensweig system and rigorous convergence to equilibrium},
   JOURNAL = {SIAM J. Math. Anal.},
  FJOURNAL = {SIAM Journal on Mathematical Analysis},
    VOLUME = {51},
      YEAR = {2019},
    NUMBER = {6},
     PAGES = {4245--4286},
      ISSN = {0036-1410,1095-7154},
   MRCLASS = {76N10 (35D30 35Q35 76W05)},
  MRNUMBER = {4027844},
MRREVIEWER = {Yanjin\ Wang},
       DOI = {10.1137/18M1224957},
       URL = {https://doi.org/10.1137/18M1224957},
}

@article {AmiratHamdacheMurat2008,
    AUTHOR = {Amirat, Youcef and Hamdache, Kamel and Murat, Fran{\c c}ois},
     TITLE = {Global weak solutions to equations of motion for magnetic
              fluids},
   JOURNAL = {J. Math. Fluid Mech.},
  FJOURNAL = {Journal of Mathematical Fluid Mechanics},
    VOLUME = {10},
      YEAR = {2008},
    NUMBER = {3},
     PAGES = {326--351},
      ISSN = {1422-6928,1422-6952},
   MRCLASS = {35Q35 (35A01 35A35 35D30 76D03 76W05)},
  MRNUMBER = {2430804},
MRREVIEWER = {Ezzedine\ Zahrouni},
       DOI = {10.1007/s00021-006-0234-6},
       URL = {https://doi.org/10.1007/s00021-006-0234-6},
}

@article{QinZhang1992,
  title = {Symplectic {Runge}-{Kutta} {Algorithms} for {Hamiltonian} {Systems}},
  volume = {10},
  issn = {0254-9409},
  url = {https://www.jstor.org/stable/45340281},
  urldate = {2026-10-04},
  journal = {Journal of Computational Mathematics},
  publisher = {Institute of Computational Mathematics and Scientific/Engineering Computing},
  author = {Qin, Meng-zhao and Zhang, Mei-qing},
  year = {1992},
  pages = {205--215},
}

@incollection {CiarletHandbookError1991,
    AUTHOR = {Ciarlet, P. G.},
     TITLE = {Basic error estimates for elliptic problems},
 BOOKTITLE = {Handbook of numerical analysis, {V}ol.\ {II}},
    SERIES = {Handb. Numer. Anal.},
    VOLUME = {II},
     PAGES = {17--351},
 PUBLISHER = {North-Holland, Amsterdam},
      YEAR = {1991},
      ISBN = {0-444-70365-9},
   MRCLASS = {65-02 (65N15 65N30)},
  MRNUMBER = {1115237},
}

@article {Ciarlet1978,
    AUTHOR = {Ciarlet, Philippe G.},
     TITLE = {Interpolation error estimates for the reduced
              {H}sieh-{C}lough-{T}ocher triangle},
   JOURNAL = {Math. Comp.},
  FJOURNAL = {Mathematics of Computation},
    VOLUME = {32},
      YEAR = {1978},
    NUMBER = {142},
     PAGES = {335--344},
      ISSN = {0025-5718,1088-6842},
   MRCLASS = {65N30 (41A25)},
  MRNUMBER = {482249},
       DOI = {10.2307/2006147},
}

@article{GiraultScott2002,
    AUTHOR = {Girault, Vivette and Scott, L. Ridgway},
     TITLE = {Hermite interpolation of nonsmooth functions preserving boundary conditions},
   JOURNAL = {Math. Comp.},
  FJOURNAL = {Mathematics of Computation},
    VOLUME = {71},
    NUMBER = {239},
      YEAR = {2002},
     PAGES = {1043--1074},
       DOI = {10.1090/S0025-5718-02-01446-1},
       URL = {https://doi.org/10.1090/S0025-5718-02-01446-1}
}

@article{DuanDuLiuZhang2019,
    AUTHOR = {Duan, Huoyuan and Du, Zhijie and Liu, Wei and Zhang, Shangyou},
     TITLE = {New Mixed Elements for {Maxwell} Equations},
   JOURNAL = {SIAM J. Numer. Anal.},
  FJOURNAL = {SIAM Journal on Numerical Analysis},
    VOLUME = {57},
    NUMBER = {1},
      YEAR = {2019},
     PAGES = {320--354},
       DOI = {10.1137/18M1168054},
       URL = {https://doi.org/10.1137/18M1168054}
}

@article {Walkington2014,
    AUTHOR = {Walkington, Noel J.},
     TITLE = {A {$C^1$} tetrahedral finite element without edge degrees of
              freedom},
   JOURNAL = {SIAM J. Numer. Anal.},
  FJOURNAL = {SIAM Journal on Numerical Analysis},
    VOLUME = {52},
      YEAR = {2014},
    NUMBER = {1},
     PAGES = {330--342},
      ISSN = {0036-1429,1095-7170},
   MRCLASS = {65N30},
  MRNUMBER = {3163246},
MRREVIEWER = {Gerard\ Awanou},
       DOI = {10.1137/130912013},
}

@article {ScottVogelius1985,
    AUTHOR = {Scott, L. R. and Vogelius, M.},
     TITLE = {Norm estimates for a maximal right inverse of the divergence
              operator in spaces of piecewise polynomials},
   JOURNAL = {RAIRO Mod\'el. Math. Anal. Num\'er.},
  FJOURNAL = {RAIRO Mod\'elisation Math\'ematique et Analyse Num\'erique},
    VOLUME = {19},
      YEAR = {1985},
    NUMBER = {1},
     PAGES = {111--143},
      ISSN = {0764-583X,1290-3841},
   MRCLASS = {65N30},
  MRNUMBER = {813691},
       DOI = {10.1051/m2an/1985190101111},
       URL = {https://doi.org/10.1051/m2an/1985190101111},
}

@article {AlfeldSorokina2016,
    AUTHOR = {Alfeld, Peter and Sorokina, Tatyana},
     TITLE = {Linear differential operators on bivariate spline spaces and
              spline vector fields},
   JOURNAL = {BIT},
  FJOURNAL = {BIT. Numerical Mathematics},
    VOLUME = {56},
      YEAR = {2016},
    NUMBER = {1},
     PAGES = {15--32},
      ISSN = {0006-3835,1572-9125},
   MRCLASS = {41A15 (41A10 65D05 65D07)},
  MRNUMBER = {3486451},
MRREVIEWER = {Tian-Xiao\ He},
       DOI = {10.1007/s10543-015-0557-x},
}

@article {GuzmanNeilan2018,
    AUTHOR = {Guzm\'an, Johnny and Neilan, Michael},
     TITLE = {Inf-sup stable finite elements on barycentric refinements
              producing divergence-free approximations in arbitrary
              dimensions},
   JOURNAL = {SIAM J. Numer. Anal.},
  FJOURNAL = {SIAM Journal on Numerical Analysis},
    VOLUME = {56},
      YEAR = {2018},
    NUMBER = {5},
     PAGES = {2826--2844},
      ISSN = {0036-1429,1095-7170},
   MRCLASS = {65N30 (65N12)},
  MRNUMBER = {3853609},
MRREVIEWER = {Marius\ Ghergu},
       DOI = {10.1137/17M1153467},
}

@inproceedings{ArnoldQin1992,
  author    = {Douglas N. Arnold and Jinshui Qin},
  title     = {Quadratic velocity/linear pressure {S}tokes elements},
  booktitle = {Proceedings of Advances in Computer Methods for Partial Differential Equations VII},
  editor    = {R. Vichnevetsky and R. S. Steplemen},
  publisher = {AICA},
  year      = {1992},
  pages     = {28--34},
  url       = {https://www.umn.edu/~arnold/papers/imacs92.pdf}
}

@article {GuzmanNeilan2014-triangle,
    AUTHOR = {Guzm\'an, Johnny and Neilan, Michael},
     TITLE = {Conforming and divergence-free {S}tokes elements on general
              triangular meshes},
   JOURNAL = {Math. Comp.},
  FJOURNAL = {Mathematics of Computation},
    VOLUME = {83},
      YEAR = {2014},
    NUMBER = {285},
     PAGES = {15--36},
      ISSN = {0025-5718,1088-6842},
   MRCLASS = {65N30 (65N12 76D07 76M10)},
  MRNUMBER = {3120580},
MRREVIEWER = {Marius\ Ghergu},
       DOI = {10.1090/S0025-5718-2013-02753-6},
       URL = {https://doi.org/10.1090/S0025-5718-2013-02753-6},
}

@article {GuzmanNeilan2014-tetrahedra,
    AUTHOR = {Guzm\'an, Johnny and Neilan, Michael},
     TITLE = {Conforming and divergence-free {S}tokes elements in three
              dimensions},
   JOURNAL = {IMA J. Numer. Anal.},
  FJOURNAL = {IMA Journal of Numerical Analysis},
    VOLUME = {34},
      YEAR = {2014},
    NUMBER = {4},
     PAGES = {1489--1508},
      ISSN = {0272-4979,1464-3642},
   MRCLASS = {65N30 (65N12 76D07)},
  MRNUMBER = {3269433},
MRREVIEWER = {David\ Maltese},
       DOI = {10.1093/imanum/drt053},
       URL = {https://doi.org/10.1093/imanum/drt053},
}

@ARTICLE{BrubeckLianParker2026,
       author = {Brubeck, Pablo D. and Liang, Yizhou and Parker, Charles},
        title = "{Cohomology of Finite Element Stokes Complexes on Alfeld Splits}",
      journal = {arXiv e-prints},
         year = 2026,
          eid = {arXiv:2605.31348},
        pages = {arXiv:2605.31348},
          doi = {10.48550/arXiv.2605.31348},
archivePrefix = {arXiv},
       eprint = {2605.31348},
 primaryClass = {cs.NA},
}

@article {ScottZhang1990,
    AUTHOR = {Scott, L. Ridgway and Zhang, Shangyou},
     TITLE = {Finite element interpolation of nonsmooth functions satisfying
              boundary conditions},
   JOURNAL = {Math. Comp.},
  FJOURNAL = {Mathematics of Computation},
    VOLUME = {54},
      YEAR = {1990},
    NUMBER = {190},
     PAGES = {483--493},
      ISSN = {0025-5718,1088-6842},
   MRCLASS = {65D05 (65N30)},
  MRNUMBER = {1011446},
MRREVIEWER = {Qian\ Li},
       DOI = {10.2307/2008497},
}

@misc{BrubeckGithub2026,
  author       = {Pablo D. Brubeck},
  title        = {Add reduced {Hermite}, reduced {Walkington}, and rotated {Bernardi-Raugel} (Pull Request \#301)},
  year         = {2026},
  publisher    = {GitHub},
  journal      = {GitHub repository},
  howpublished = {\url{https://github.com/firedrakeproject/fiat/pull/301}},
  note         = {firedrakeproject/fiat}
}

@article{ErnGuermond2017,
    AUTHOR = {Ern, Alexandre and Guermond, Jean-Luc},
     TITLE = {Finite element quasi-interpolation and best approximation},
   JOURNAL = {ESAIM Math. Model. Numer. Anal.},
  FJOURNAL = {ESAIM: Mathematical Modelling and Numerical Analysis},
    VOLUME = {51},
      YEAR = {2017},
    NUMBER = {4},
     PAGES = {1367--1385},
  MRNUMBER = {3702417},
       DOI = {10.1051/m2an/2016066},
       URL = {https://doi.org/10.1051/m2an/2016066},
}

@book{Teschl2012,
  author    = {Teschl, Gerald},
  title     = {Ordinary Differential Equations and Dynamical Systems},
  series    = {Graduate Studies in Mathematics},
  volume    = {140},
  publisher = {American Mathematical Society},
  address   = {Providence, RI},
  year      = {2012},
  isbn      = {978-0-8218-8328-0},
  url       = {https://www.mat.univie.ac.at/~gerald/ftp/book-ode/ode.pdf}
}

@misc{Prazak2024,
  author       = {Pra{\v{z}}{\'{a}}k, Dalibor},
  title        = {{Carath\'eodory} theory of {ODEs}},
  year         = {2024},
  howpublished = {\url{https://www.karlin.mff.cuni.cz/~prazak/vyuka/Odr2/Skripta/en_acODR-24.pdf}},
  note         = {Lecture notes, Fall 2024}
}

@techreport{Stewart1990,
  author      = {Stewart, G. W.},
  title       = {Perturbation Theory for the Singular Value Decomposition},
  institution = {University of Maryland},
  address     = {College Park, MD},
  type        = {Technical Report},
  number      = {UMIACS-TR-90-124},
  year        = {1990},
  month       = sep,
  url         = {https://users.math.msu.edu/users/iwenmark/Teaching/MTH995/Papers/SVD_Stewart.pdf}
}

@article{Rakocevic1997,
  author  = {Rako{\v{c}}evi{\'{c}}, Vladimir},
  title   = {On continuity of the {Moore--Penrose} and {Drazin} inverses},
  journal = {Mat. Vesnik},
  volume  = {49},
  number  = {3--4},
  year    = {1997},
  pages   = {163--172},
  url     = {https://www.vesnik.math.rs/vol/mv973404.pdf}
}

@article {LanthalerWeber2026,
    AUTHOR = {Lanthaler, Samuel and Weber, Franziska},
     TITLE = {Global {E}xistence of {W}eak {S}olutions to a {T}wo-{P}hase
              {D}iffuse {I}nterface {M}odel of {F}errofluids {D}ynamics},
   JOURNAL = {SIAM J. Math. Anal.},
  FJOURNAL = {SIAM Journal on Mathematical Analysis},
    VOLUME = {58},
      YEAR = {2026},
    NUMBER = {5},
     PAGES = {4723--4771},
      ISSN = {0036-1410,1095-7154},
   MRCLASS = {35Q35 (35D30 35M33)},
  MRNUMBER = {5128505},
       DOI = {10.1137/25M1772769},
}

@article {NochettoSalgadoTomas2016,
    AUTHOR = {Nochetto, Ricardo H. and Salgado, Abner J. and Tomas, Ignacio},
     TITLE = {The equations of ferrohydrodynamics: modeling and numerical
              methods},
   JOURNAL = {Math. Models Methods Appl. Sci.},
  FJOURNAL = {Mathematical Models and Methods in Applied Sciences},
    VOLUME = {26},
      YEAR = {2016},
    NUMBER = {13},
     PAGES = {2393--2449},
      ISSN = {0218-2025,1793-6314},
   MRCLASS = {65M60 (35Q35 35Q60 65M12 76T20 76W99)},
  MRNUMBER = {3579306},
MRREVIEWER = {Rafael\ F.\ Santos},
       DOI = {10.1142/S0218202516500573},
       URL = {https://doi.org/10.1142/S0218202516500573},
}

@article {NochettoSalgadoTomas2016TwoPhase,
    AUTHOR = {Nochetto, Ricardo H. and Salgado, Abner J. and Tomas, Ignacio},
     TITLE = {A diffuse interface model for two-phase ferrofluid flows},
   JOURNAL = {Comput. Methods Appl. Mech. Engrg.},
  FJOURNAL = {Computer Methods in Applied Mechanics and Engineering},
    VOLUME = {309},
      YEAR = {2016},
     PAGES = {497--531},
      ISSN = {0045-7825,1879-2138},
   MRCLASS = {65M60 (35Q35 35Q61 65M12 76T99)},
  MRNUMBER = {3543019},
       DOI = {10.1016/j.cma.2016.06.011},
       URL = {https://doi.org/10.1016/j.cma.2016.06.011},
}

@article {WuXie2023,
    AUTHOR = {Wu, Yongke and Xie, Xiaoping},
     TITLE = {Energy-stable mixed finite element methods for a ferrofluid
              flow model},
   JOURNAL = {Commun. Nonlinear Sci. Numer. Simul.},
  FJOURNAL = {Communications in Nonlinear Science and Numerical Simulation},
    VOLUME = {125},
      YEAR = {2023},
     PAGES = {Paper No. 107330, 40},
      ISSN = {1007-5704,1878-7274},
   MRCLASS = {65N55 (65F10 65N22 65N30)},
  MRNUMBER = {4599452},
       DOI = {10.1016/j.cnsns.2023.107330},
       URL = {https://doi.org/10.1016/j.cnsns.2023.107330},
}

@article {WuXie2025,
    AUTHOR = {Wu, Yongke and Xie, Xiaoping},
     TITLE = {Energy-stable mixed finite element methods for the
              {R}osensweig ferrofluid flow model},
   JOURNAL = {Math. Models Methods Appl. Sci.},
  FJOURNAL = {Mathematical Models and Methods in Applied Sciences},
    VOLUME = {35},
      YEAR = {2025},
    NUMBER = {12},
     PAGES = {2513--2559},
      ISSN = {0218-2025,1793-6314},
   MRCLASS = {65M60 (65M12 65M15)},
  MRNUMBER = {4969116},
MRREVIEWER = {Beny\ Neta},
       DOI = {10.1142/S0218202525500459},
       URL = {https://doi.org/10.1142/S0218202525500459},
}

@article {DongHuangHuang2025,
    AUTHOR = {Dong, Xiaojing and Huang, Huayi and Huang, Yunqing},
     TITLE = {A first-order, semi-implicit, and unconditionally
              energy-stable scheme for an incompressible ferrohydrodynamics
              flow},
   JOURNAL = {J. Comput. Math.},
  FJOURNAL = {Journal of Computational Mathematics},
    VOLUME = {43},
      YEAR = {2025},
    NUMBER = {4},
     PAGES = {866--897},
      ISSN = {0254-9409,1991-7139},
   MRCLASS = {65N30 (35Q60 65M60 65N12 76W05)},
  MRNUMBER = {4931852},
MRREVIEWER = {Thi-Thao-Phuong\ Hoang},
       DOI = {10.4208/jcm.2402-m2023-0181},
       URL = {https://doi.org/10.4208/jcm.2402-m2023-0181},
}

@article {DongHuangHuangTang2025,
    AUTHOR = {Dong, Xiaojing and Huang, Huayi and Huang, Yunqing and Tang,
              Qili},
     TITLE = {On the {R}osensweig model: a linear, energy-stable, and
              convergent finite element method},
   JOURNAL = {Sci. China Math.},
  FJOURNAL = {Science China. Mathematics},
    VOLUME = {68},
      YEAR = {2025},
    NUMBER = {11},
     PAGES = {2753--2772},
      ISSN = {1674-7283,1869-1862},
   MRCLASS = {65M60 (65M12 65M15)},
  MRNUMBER = {4970478},
MRREVIEWER = {Wenjie\ Liu},
       DOI = {10.1007/s11425-024-2393-y},
       URL = {https://doi.org/10.1007/s11425-024-2393-y},
}

@article {KeramHuangHe2026,
    AUTHOR = {Keram, Aytura and Huang, Pengzhan and He, Yinnian},
     TITLE = {Stability and convergence of a variable time step scheme for
              {R}osensweig's ferrohydrodynamics equations},
   JOURNAL = {Commun. Nonlinear Sci. Numer. Simul.},
  FJOURNAL = {Communications in Nonlinear Science and Numerical Simulation},
    VOLUME = {152},
      YEAR = {2026},
     PAGES = {Paper No. 109271, 28},
      ISSN = {1007-5704,1878-7274},
   MRCLASS = {65M12 (65M60)},
  MRNUMBER = {4959639},
MRREVIEWER = {Yi\ Qin},
       DOI = {10.1016/j.cnsns.2025.109271},
       URL = {https://doi.org/10.1016/j.cnsns.2025.109271},
}

@article {MaoSun2025,
    AUTHOR = {Mao, Shipeng and Sun, Jiaao},
     TITLE = {Error estimates of finite element method for the
              incompressible ferrohydrodynamics equations},
   JOURNAL = {Commun. Appl. Math. Comput.},
  FJOURNAL = {Communications on Applied Mathematics and Computation},
    VOLUME = {7},
      YEAR = {2025},
    NUMBER = {2},
     PAGES = {485--535},
      ISSN = {2096-6385,2661-8893},
   MRCLASS = {76M10 (35Q35 65M15 65M60 65N12)},
  MRNUMBER = {4865882},
       DOI = {10.1007/s42967-023-00347-w},
       URL = {https://doi.org/10.1007/s42967-023-00347-w},
}

@article {WuXie2023Parallel,
    AUTHOR = {Wu, Yongke and Xie, Xiaoping},
     TITLE = {Mixed finite element methods for the ferrofluid model with
              magnetization paralleled to the magnetic field},
   JOURNAL = {Numer. Math. Theory Methods Appl.},
  FJOURNAL = {Numerical Mathematics. Theory, Methods and Applications},
    VOLUME = {16},
      YEAR = {2023},
    NUMBER = {2},
     PAGES = {489--510},
      ISSN = {1004-8979,2079-7338},
   MRCLASS = {65N30 (65F10 65N22 65N55)},
  MRNUMBER = {4573911},
MRREVIEWER = {Olga\ A.\ Lavrova},
       DOI = {10.4208/nmtma.oa-2022-0163},
       URL = {https://doi.org/10.4208/nmtma.oa-2022-0163},
}

@techreport{BenavidesCaucaoGaticaSobral2026,
  author      = {Benavides, Gonzalo A. and Caucao, Sergio and Gatica, Gabriel N. and Sobral, Yuri D.},
  title       = {New {Banach} spaces-based mixed finite element methods for steady-state flows of magnetic fluids},
  institution = {Centro de Investigaci{\'o}n en Ingenier{\'i}a Matem{\'a}tica, Universidad de Concepci{\'o}n},
  type        = {Preprint},
  number      = {2026-18},
  year        = {2026},
  url         = {https://www.ci2ma.udec.cl/pdf/pre-publicaciones2/2026/pp26-18.pdf}
}

@article {ZhangHeYang2021,
    AUTHOR = {Zhang, Guo-Dong and He, Xiaoming and Yang, Xiaofeng},
     TITLE = {Decoupled, linear, and unconditionally energy stable fully
              discrete finite element numerical scheme for a two-phase
              ferrohydrodynamics model},
   JOURNAL = {SIAM J. Sci. Comput.},
  FJOURNAL = {SIAM Journal on Scientific Computing},
    VOLUME = {43},
      YEAR = {2021},
    NUMBER = {1},
     PAGES = {B167--B193},
      ISSN = {1064-8275,1095-7197},
   MRCLASS = {65M60 (35Q60 65M12 76W05)},
  MRNUMBER = {4202511},
MRREVIEWER = {Beny\ Neta},
       DOI = {10.1137/19M1288280},
       URL = {https://doi.org/10.1137/19M1288280},
}

@article {ZhangHeYang2023,
    AUTHOR = {Zhang, Guo-Dong and He, Xiaoming and Yang, Xiaofeng},
     TITLE = {Reformulated weak formulation and efficient fully discrete
              finite element method for a two-phase ferrohydrodynamics
              {S}hliomis model},
   JOURNAL = {SIAM J. Sci. Comput.},
  FJOURNAL = {SIAM Journal on Scientific Computing},
    VOLUME = {45},
      YEAR = {2023},
    NUMBER = {3},
     PAGES = {B253--B282},
      ISSN = {1064-8275,1095-7197},
   MRCLASS = {65N12 (65M12 65M70)},
  MRNUMBER = {4595845},
       DOI = {10.1137/22M1499376},
       URL = {https://doi.org/10.1137/22M1499376},
}

@article {ZhangPanHeYang2025,
    AUTHOR = {Zhang, Guo-Dong and Pan, Kejia and He, Xiaoming and Yang,
              Xiaofeng},
     TITLE = {Energy-stable and efficient finite element schemes for the
              {S}hliomis model of ferrofluid flows},
   JOURNAL = {Adv. Comput. Math.},
  FJOURNAL = {Advances in Computational Mathematics},
    VOLUME = {51},
      YEAR = {2025},
    NUMBER = {4},
     PAGES = {Paper No. 36, 27},
      ISSN = {1019-7168,1572-9044},
   MRCLASS = {65M60 (65M12)},
  MRNUMBER = {4933245},
       DOI = {10.1007/s10444-025-10249-5},
       URL = {https://doi.org/10.1007/s10444-025-10249-5},
}

@article {ZhangHeYang2025,
    AUTHOR = {Zhang, Guo-Dong and He, Xiaoming and Yang, Xiaofeng},
     TITLE = {Efficient fully discrete finite element scheme for the
              ferrohydrodynamic {R}osensweig model and simulations of
              ferrofluid rotational flow problems},
   JOURNAL = {SIAM J. Sci. Comput.},
  FJOURNAL = {SIAM Journal on Scientific Computing},
    VOLUME = {47},
      YEAR = {2025},
    NUMBER = {1},
     PAGES = {B28--B58},
      ISSN = {1064-8275,1095-7197},
   MRCLASS = {65M70 (65M12 76W05)},
  MRNUMBER = {4846762},
       DOI = {10.1137/24M1640914},
       URL = {https://doi.org/10.1137/24M1640914},
}

@article {DongHuangHuangYang2026,
    AUTHOR = {Dong, Xiaojing and Huang, Huayi and Huang, Yunqing and Yang,
              Xiaofeng},
     TITLE = {A fully discrete finite element method with second-order
              temporal accuracy for the {R}osensweig model},
   JOURNAL = {J. Comput. Phys.},
  FJOURNAL = {Journal of Computational Physics},
    VOLUME = {549},
      YEAR = {2026},
     PAGES = {Paper No. 114617, 19},
      ISSN = {0021-9991,1090-2716},
   MRCLASS = {65M60 (65M12)},
  MRNUMBER = {5009159},
       DOI = {10.1016/j.jcp.2025.114617},
       URL = {https://doi.org/10.1016/j.jcp.2025.114617},
}

@misc{BenavidesNochettoShakipov2026,
  author        = {Benavides, Gonzalo A. and Nochetto, Ricardo H. and Shakipov, Mansur},
  title         = {Ferrofluids: modeling and approximation},
  year          = {2026},
  howpublished  = {arXiv:2606.01392},
  archivePrefix = {arXiv},
  eprint        = {2606.01392},
  primaryClass  = {math.NA},
  doi           = {10.48550/arXiv.2606.01392},
  url           = {https://arxiv.org/abs/2606.01392}
}

@article {DiPietroErn2010,
    AUTHOR = {Di Pietro, Daniele A. and Ern, Alexandre},
     TITLE = {Discrete functional analysis tools for discontinuous
              {G}alerkin methods with application to the incompressible
              {N}avier-{S}tokes equations},
   JOURNAL = {Math. Comp.},
  FJOURNAL = {Mathematics of Computation},
    VOLUME = {79},
      YEAR = {2010},
    NUMBER = {271},
     PAGES = {1303--1330},
      ISSN = {0025-5718,1088-6842},
   MRCLASS = {65N30 (35Q30 46E39 46N20 65N12 76D05)},
  MRNUMBER = {2629994},
MRREVIEWER = {Mario\ Ohlberger},
       DOI = {10.1090/S0025-5718-10-02333-1},
       URL = {https://doi.org/10.1090/S0025-5718-10-02333-1},
}

@article{KirbyMitchell2019,
author = {Kirby, Robert C. and Mitchell, Lawrence},
title = {Code Generation for Generally Mapped Finite Elements},
year = {2019},
publisher = {Association for Computing Machinery},
address = {New York, NY, USA},
volume = {45},
number = {4},
issn = {0098-3500},
url = {https://doi.org/10.1145/3361745},
doi = {10.1145/3361745},
journal = {ACM Trans. Math. Softw.},
articleno = {41},
numpages = {23},
note = {Article 41, 23 pages}
}

\end{document}